\numberwithin{equation}{section}       % Number formulas within sections
\theoremstyle{plain}
\newtheorem{theorem}{Theorem}[section]
\newtheorem{prop}{Proposition}[section]
\newtheorem{coro}[prop]{Corollary}
\newtheorem{lemma}[prop]{Lemma}
\theoremstyle{definition}
\newtheorem{definition}[prop]{Definition}
\theoremstyle{remark}
\newtheorem{remark}[prop]{Remark}
\newtheoremstyle{citing}% name
  {3pt}%      Space above, empty = `usual value'
  {3pt}%      Space below
  {\itshape}% Body font
  {}%         Indent amount (empty = no indent, \parindent = para indent)
  {\bfseries}% Thm head font
  {.}%        Punctuation after thm head
  {.5em}%     Space after thm head: " " = normal interword space;
\theoremstyle{citing}
\DeclareMathAlphabet{\mathpzc}{OT1}{pzc}{m}{it} % Zapf Chancery math alphabet
\newcommand{\hyp}{\textrm{hyp}}
\newcommand{\C}{\mathbb{C}}
\newcommand{\N}{\mathbb{N}}
\newcommand{\Q}{\mathbb{Q}}
\newcommand{\R}{\mathbb{R}}
\newcommand{\T}{\mathbb{T}}
\newcommand{\Z}{\mathbb{Z}}
\newcommand{\teta}{\widetilde{\teta}}
\newcommand{\eps}{\varepsilon}
\newcommand{\dist}{d}
\DeclareMathOperator{\diam}{diam}
\DeclareMathOperator{\supp}{supp} % Support of a measure
\def\marginparQ#1{\marginpar{}}
\begin{document}
\title[]{$L^\infty$-estimation of generalized Thue-Morse trigonometric polynomials and ergodic maximization}
%\author{}

\author{Aihua FAN}
\address{
	%School of Mathematics and Statistics, Central China Normal %University,430079, Wuhan, China \&
	LAMFA, UMR 7352 CNRS, University
	of Picardie, 33 rue Saint Leu, 80039 Amiens, France}
\email{ai-hua.fan@u-picardie.fr}

%	\urladdr{www.math.sc.edu/$\sim$howard}
\author{J\"{O}RG SCHMELING}
\address{Lund University\\
	Centre for Mathematical Sciences\\
	Box 118, 221 00 LUND,  Sweden}
\email{joerg@maths.lth.se}

\author{Weixiao SHEN}
\address{Shanghai Center for Mathematical Sciences\\ Fudan University\\ 220 Handan Road, Shanghai 200433, China}
\email{wxshen@fudan.edu.cn}

%\date{\today}

\maketitle
%\marginpar{rephrased}
\begin{abstract} 
%Generalized Thue-Morse sequences
	%$(t_n^{(q;c)})_{n\ge 0}$ ($q\ge 2$ and $c \in [0,1)$ being  parameters)
	%are defined by
	%$
	%t_n^{(q;c)} = e^{2\pi i c S_q(n)}
	%$, where $S_q(n)$ is the sum of  digits of the $q$-expansion of $n$, where $q\ge 2$ is an integer. 
Given an integer $q\ge 2$ and a real number $c\in [0,1)$, consider the generalized Thue-Morse sequence 
$(t_n^{(q;c)})_{n\ge 0}$ defined by $t_n^{(q;c)} = e^{2\pi i c S_q(n)}$, where $S_q(n)$ is the sum of  digits of the $q$-expansion of $n$.
We prove that %For 
the $L^\infty$-norm of the trigonometric polynomials
	$\sigma_{N}^{(q;c)} (x) := \sum_{n=0}^{N-1} t_n^{(q;c)} e^{2\pi i n x}$,
	%we prove that the uniform norm $\|\sigma_N^{(c)}\|_\infty$
	behaves like $N^{\gamma(q;c)}$, 
 %and the best exponent 
 where $\gamma(q;c)$ is equal to the dynamical
	maximal value of $\log_q \left|\frac{\sin q\pi (x+c)}{\sin \pi (x+c)}\right|$ relative to the
	dynamics $x \mapsto qx \mod 1$ and that the maximum value is attained
	by a $q$-Sturmian measure. Numerical values of $\gamma(q;c)$ can be computed.
\iffalse	
	Minimization is also discussed.
	%When $\xi: = 1/2 -c$
	%is a periodic point,
	It is also proved that $2^{-n}|\sigma_{2^n}^{(c)}(x)|$ behaves pointwise like $e^{n \alpha (x)}$
	with $\alpha(x) <0$ and that the function
	$\alpha(x)$ is multifractal.
\fi

%.\cite{bassily2009note},\cite{delange1972fonctions},\cite{grabner1993completely},\cite{liardet1987regularities}
\end{abstract}

\tableofcontents

\section {Introduction and main results}

Let $q\ge 2$ be a positive integer. For any integer $n\ge 0$, we denote by $S_q(n)$  the sum of  digits of expansion of $n$ in base $q$. Fix $c\in [0,1)$, we define the generalized Thue-Morse sequence $(t_{n}^{(q;c)})_{n\ge 0}$ by
$$
        t_{n}^{(q;c)} = e^{2\pi i c S_q(n)}.
$$
The case that $q=2$ and $c=1/2$ corresponds to the classical Thue-Morse sequence:
$$
 1, -1, -1, 1,-1, 1, 1, -1, -1, 1, 1, -1, 1, -1, -1, 1, \cdots.
$$
%\marginpar{deleted}%which will be
%simply written as $(t_{n})$.~\marginpar{delete?}
%The case $c=0$, which give the constant sequence $t_n^{(0)} =1$, will be %excluded so that we will assume that $0<c<1$.
By a {\em generalized Thue-Morse trigonometric series} we mean
$$
   \sum_{n=0}^{\infty} t_{n}^{(q;c)} e^{2\pi i n x},
$$
which defines a distribution on the circle $\mathbb{T}:=\mathbb{R}/\mathbb{Z}$. %In this paper,
We are interested in the asymptotic behaviors of its partial sums, called the {\em generalized Thue-Morse  trigonometric
polynomials}:
\begin{equation}\label{GTM}
     \sigma_{N}^{(q;c)} (x) := \sum_{n=0}^{N-1} t_{n}^{(q;c)} e^{2\pi i n x} \quad (N\ge 1).
\end{equation}
%We shall study the asymptotic behaviors of $\sigma_{N}^{(q;c)} (x)$
%by investigating the  growth of the uniform norm $\|\sigma_{N}^{(q;c)}\|_\infty$ and the multifractal analysis of  the limit $\lim_{n\to\infty} %n^{-1}\log |\sigma_{q^n}^{(q;c)} (x)|$.

The first problem is to find or to estimate the best constant $\gamma$ such that
\begin{equation}\label{eqn:BN}
\sup_{x\in\R}\left|\sum_{n=0}^{N-1} t_{n}^{(q;c)} e^{2\pi i n x}\right|=O(N^\gamma).
\end{equation}
Define $\gamma(q, c)$, sometimes denoted $\gamma(c)$, to be the infimum of all $\gamma$ for which (\ref{eqn:BN}) holds.
Following Fan \cite{F}, we call $\gamma(c)$ the {\em Gelfond exponent}
of the generalized Thue-Morse sequence $(t^{(q;c)}_{n})$.
%We call $\gamma(c)$ the Gelfond exponent of $(t_n^{(c)})_{n\ge 0}$.
%Clearly $\gamma(c)\ge 1/2$ since $  \|\sigma_N^{(c)}\|_2 =N$.
 The first result, due to Gelfond \cite{Gelfond1968}, is that
 $$
 \gamma(2; 1/2)= \frac{\log 3}{\log 4} =0.792481....
 $$
 Trivially $\gamma(q; 0)=1$. No other exact exponents $\gamma(q; c)$ are known.
 A basic fact, as %~\marginpar{changed} 
 a consequence of the so-called $q$-multiplicativity of
 $(t^{(q;c)}_n)$,  is the following expression
 \begin{equation}
     |\sigma_{q^n}^{(c)}(x)| = \prod_{k=0}^{n-1} \left|\frac{\sin \pi (q^k x +c)}{\sin \pi(x+c)}\right|.
 \end{equation}

% \marginpar{from here check later}
 Thus the dynamical system  $T=T_q: \mathbb{T}\to \mathbb{T}$ defined by $Tx = q x \mod 1$ is naturally involved. Let
 $$
    f_{q;c}(x):= f_{c}(x) :=\log \left|\frac{\sin q\pi (x+c)}{\sin \pi (x+c)}\right|.
 $$
 We will simply write $f_c$ if there is no confusion.
Let us point out that $f_c$ is a translation of $f_0$
and that $f_0(x)\le \log q$ for all $x$ and $f_0(0)=\log q$, and $f_0$ has $q-1$ singularities as a function on $\mathbb{T}$ in the sense $f_0(r/q)=-\infty$ for $1\le k\le q-1$ . Furthermore, $f_0$
 is concave between any two adjacent singularity points.
  Consequently $f_c$ attains its maximal value at $x=-c$ and its singularity points are $b_k:= -c +k/q$ ($1\le k \le q-1$).
  See Figure \ref{fig:f0} for its graph.

 \begin{figure}[htb]
 	\centering
 	\includegraphics[width=0.80\linewidth]{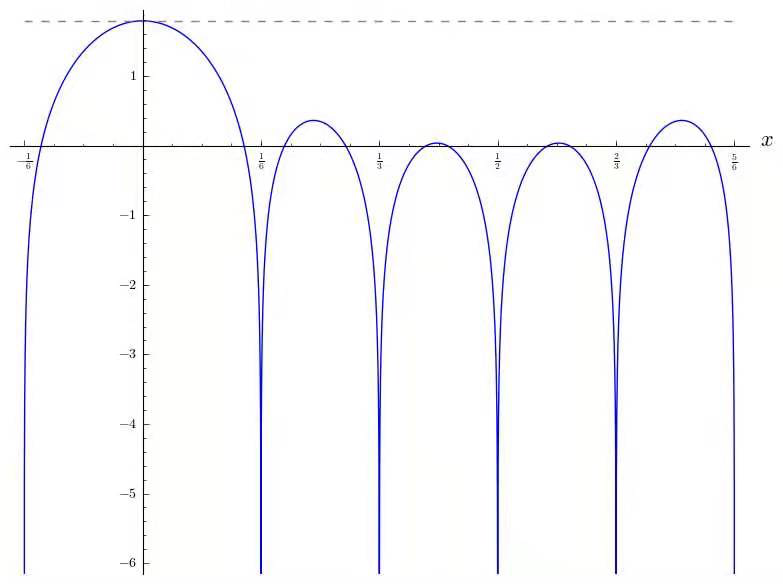}
 	\caption{The graphs of $f_0$ on the interval % $ [0,1] $ and
 		$ [-1/q,1-1/q] $, here $q=6$.}
 	\label{fig:f0}
 \end{figure}

 %where $b : = 1/2 - c \in (-1/2, 1/2]$ is the unique zero of the %function
 %$|\cos \pi (x+c)|$,
 %which will play a crucial role.

As we shall see in Proposition~\ref{prop:b-g}, finding the Gelfond exponent $\gamma(c)=\gamma(q;c)$ % ~\marginpar{added} 
is
equivalent to maximizing $f_c$. That is to say
\begin{equation}\label{Beta-Gamma}
\gamma (c) = \frac{\beta(c)}{\log q}
\end{equation}
with
\begin{equation}\label{Beta}
       \beta(c): = \sup_{\mu \in \mathcal{M}_T} \int_{\mathbb{T}}
         f_c(x) d\mu(x)
\end{equation}
where $\mathcal{M}_T$ is the set of $T$-invariant Borel probability measures (Theorem \ref{DefBeta}).  It is easy to see that  $\gamma(c) <1$ so that $\beta(c) <\log q$ for all $c \in (0,1)$, just because
$$
     \max_x \prod_{j=0}^{q-1}\left|\frac{\sin q \pi (q^jx+c)}{q \sin \pi (q^j x +c)}\right|<1.
$$
A detailed argument is given in \cite{FK2018}.
%But trivially
%$\gamma(0) =1$ or equivalently $\beta(0)=0$.

\medskip

Our main result in this paper is the following theorem concerning the maximal value $\beta(c)$.
\\

\noindent {\bf Main Theorem.} {\em Fix an integer $q\ge 2$. The following hold. \, \ \\
	\indent {\rm  (1)} The supremum in (\ref{Beta}) defining $\beta(c)$
is attained by a unique measure and this measure is $q$-Sturmian. \\
\indent {\rm  (2)} Such a  $q$-Sturmian measure is  periodic
in most cases. More precisely, those parameters $c$ corresponding to non-periodic
Sturmian measures form a set of zero Hausdorff dimension. \\
\indent {\rm  (3)} There is a constant $C>0$ such that
\begin{equation}
	\forall x \in \mathbb{T},  \forall N\ge 1, \ \ \
	 \left|\sum_{n=0}^{N-1} t_{n}^{(q;c)} e^{2\pi i n x}\right|\le C N^{ \gamma(c)}.
\end{equation}
%where $\gamma(c)$ is the best possible.
}
\medskip

A $q$-Sturmian measure is by definition a $T_q$-invariant Borel probability measure
with its support contained in a closed arc of length $\frac{1}{q}$. It is well-known that each closed arc of length $\frac{1}{q}$ supports a unique 
$T$-invariant Borel probability measure. A proof of this fact is included in Appendix A for the reader's convenience.

\iffalse
In the following theorem, we state some properties of
$q$-Sturmian measures, which will be useful to us.
\medskip

\marginpar{I prefer not to state it as a Theorem}
\noindent {\bf Theorem B.} {\em \, \
	Fix $q\ge 2$ and consider the dynamics on $\mathbb{T}$ defined by $Tx=qx\mod 1$. For each $\gamma\in \R$, there is a unique $T$-invariant Borel probability measure $\frak{S}_\gamma$ supported in $C_\gamma=[\gamma,\gamma+q^{-1}] \mod 1 \subset \mathbb{T}$. We have $\frak{S}_\gamma=\frak{S}_{\gamma+1}$ for each $\gamma\in \R$. Moreover, there is a continuous, monotone map $\rho: \R\to \R$ such that\\
	%\begin{itemize}
	\indent {\rm (1)} $\rho(\gamma+1)=\rho(\gamma)+q-1$.\\
	\indent {\rm (2)} $\rho(\gamma)\in \Q$ if and only if $\frak{S}_\gamma$ is supported on a periodic orbit.\\
	\indent {\rm (3)} for each $r\in \Q$, $\rho^{-1}(r)$ is a non-degenerate interval.\\
	\indent {\rm (4)} $\{\gamma\in [0,1): \rho(\gamma)\not\in \Q\}$ has upper Minskowski dimension zero.
	%\end{itemize}
}

\medskip

The results in Theorem B are known. See \cite{Veerman1989}, where more general cases were considered.
For the case $q=2$,  see \cite{BS1994,Bousch2000}.
For related works, see \cite{Boyd1985,Mane1985,Swiatek1988,Veerman1986,Veerman1987}.
%But for reader's convenience,
%we will give a self-contained direct proof in Appendix A.
\fi 

\medskip

For the maximization,
many of the existing results in the literature deal with the case that $f$ is a H\"older continuous function, by Bousch \cite{Bousch2000,Bousch2001}, Jenkinson \cite{J2006,Jenkinson2007,J2008,Jenkinson2009},
Jenkinson and Steel \cite{Jenkinson-S_2010}, Contreras, Lopes and Thieullen  \cite{CLT2001}, Contreras \cite{C2016}, among others. There is a very nice survey paper
\cite{J2017} in which there is a rather complete list of references. See also Anagnostopoulou et al \cite{ADJR2010, ADJR2012b,ADJR2012},  Bochi \cite{Bochi2018}.
%\marginpar{added, to be completed}

\medskip

Up to now, as far as we know, only the exact value of the Gelfond exponent $\gamma(2; 1/2)$ is known, obtained by Gelfond \cite{Gelfond1968}. Some estimate is obtained by Mauduit, Rivat and Sarkozy \cite{MRS2017}.
In Section \ref{sec:compute},  a computer-aided method will be provided to compute the Gelfond
 exponent $\gamma(c)$,  based on the theory
 developed  in Section \ref{sec:max}.   Figure \ref{fig:1} shows the graph of $\gamma(2; c)$
 %. We point out that  this graph is not a complete graph and %that it only shows
 %$\gamma(c)$
 for $c$'s corresponding to periodic Sturmian measures with
 period not exceeding $13$. More %~\marginpar{changed} 
 details can be filled in by using Sturmian measures with larger periods. 
 %It is interesting to 
 Let us %~\marginpar{changed}
  point out that for $c \in (0.428133329021334,0.571866670978666)$, we get the exact value
 \begin{equation}\label{eq: beta_2}
        \beta(2;c) =\log 2+ \frac{1}{2}\log \left|\cos \pi \left(\frac{1}{3}+c\right)
        \cos \pi \left(\frac{2}{3}+c\right)\right|.
 \end{equation}
 The modal around $c=\frac{1}{2}$ of the graph of $\beta(\cdot)$ is nothing
 but the graph of the function on the right hand side of (\ref{eq: beta_2}). This is the contribution of the $2$-cycle
 $\{1/3,2/3\}$. Other details shown in Figure \ref{fig:1} are contributed by other
 cycles.  See (\ref{bc}) for a formula more general than
 (\ref{eq: beta_2}).  The symmetry of the graph of $\gamma(\cdot)$ reflects nothing but the fact $\gamma(q; 1-c)=\gamma(q; c)$ which holds for all
 $c$.

 % However, the above method can not find those
 %$\beta(c)$'s corresponding to non-periodic Sturmian measures.

 \begin{figure}[htb]
 	\centering
 	\includegraphics[width=0.85\linewidth]{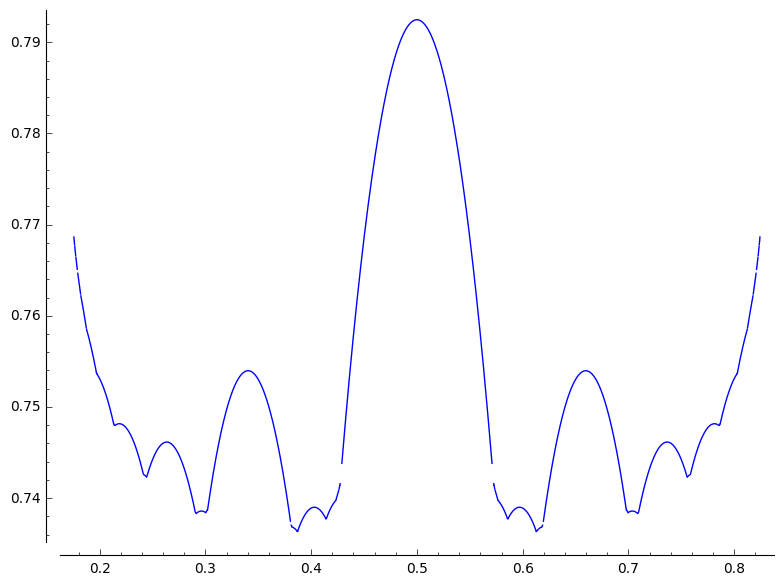}
 	\caption{The graphs of $\gamma(c)$.}
 	\label{fig:1}
 \end{figure}

The Thue-Morse sequence $t_n^{(2;1/2)}=(-1)^{s_2(n)}$ and the  digital sum function $n \mapsto s_2(n)$ are extensively studied in harmonic analysis and number theory
after the works of Mahler \cite{M1927}  and Gelfond \cite{Gelfond1968}.
%Let $\mathcal{A}^+$~\marginpar{Do we need to mention $\mathcal{A}^+$?}
%be 
The set of natural numbers $n$ such that
$s_2(n)$ are even is studied and 
the norms  $\|\sigma_N^{(c)}\|_\infty$ and $\|\sigma_N^{(c)}\|_1$ are involved in the study of the distribution of such sets in $\mathbb{N}$ \cite{Gelfond1968,FM1996a,FM1996b,DT2005}. Queff\'elec \cite{Queffelec2018} showed how to estimate the $L^1$-norm
using the $L^\infty$-norm through an interpolation method.
C. Mauduit and J. Rivat \cite{MR2010} answered a longstanding question of A. O. Gelfond \cite{Gelfond1968} on how the sums of digits of primes are distributed.  This study deals with
$\sum_{p\le N} e^{2\pi i x s_2(p)}$ ($p$ being prime). Polynomials
of the form $\sum_{n\le N} (1+(-1)^{s_2(n)})e^{2\pi i n x}$ are studied
in \cite{AHL2017}. Recently Fan and Konieczny \cite{FK2018} proved that for every $0<c<1$ and  every integer $d\ge 1$ there exist constants $C>0$ and
$0<\gamma_d <1$ such that
$$
    \sup_{ \substack{ q \in \mathbb{R}[x] \\ \deg q \leq d}}
     \left| \sum_{n=0}^{N-1} t_n^{(c)} e^{ 2\pi i q(n)}\right|\le C N^{\gamma_d}.
$$
See also \cite{Konieczny2017}. But the optimal $\gamma_d$ is not known.
\medskip

%\marginpar{The announcement should be rewritten}
A dual quantity is the minimal value
\begin{equation}\label{Alpha}
\alpha(c): = \inf_{\mu \in \mathcal{M}_T} \int_{\mathbb{T}}
   f_c(x) d\mu(x)
%\log |\cos \pi (x+c)| d\mu(x),
\end{equation}
which will play an important role in the study of the pointwise behavior
of $\sigma_N^{(q;c)}(x)$. This minimization and the multifractal analysis of $\sigma_N^{(q;c)}(x)$ are stuided in a forthcoming paper.

We start the paper with  a general setting of dynamical maximization and
minimization (Section 2) and an observation  that the computation
of the Gelfond exponents for generalized Thue-Morse sequences is a
dynamical maximization problem (Section 3). Theorem A will be proved in Section 4 which is the core of the paper.
Section 5 is an appendix, devoted
to the numerical computation of $\beta(c)$ and $\gamma(c)$.

%, even for the Birkhoff limsup. But the liminf
%can take any value less than $-\log 2$. Remark that $\dim %E_0(\alpha(0))=1$.

%For general parameter $c$, the multifractal analysis of %$|\sigma_N^{(c)}(x)|$ seems to be delicate and
%we shall pursue the study  in a forthcoming paper.

\medskip

{\bf Acknowledgements.} The authors are grateful to Thierry Bousch and Oliver Jenkinson for providing useful informations, to Geng Chen for numerical computation and graphic generation.
The first author is supported by NSFC grant no. 11471132 and the
third author is supported by NSFC grant no. 11731003. The first and second authors would like to thank
Knuth and Alice Wallenberg Foundation and  Institut Mittag-Leffler
 (Sweden)
 for their supports.

\section{General setting of maximization and minimization}
 %(The proof follows the published proof of a result of Przytycki, %Rivera-Letelier)

 Let $T: X\to X$ be a continuous map from a compact metric space $X$ to itself. Given an upper semi-continuous function $f: X \to [-\infty, +\infty)$, an interesting and natural problem is {\em ergodic optimization} which asks for %~\marginpar{added} 
 the following maximization
 \begin{equation}\label{Max}
 \beta_f:= \sup_{\mu\in \mathcal{M}_T} \int f(x) d\mu(x)
 \end{equation}
 where $\mathcal{M}_T$ denotes the convex set of all Borel probability $T$-invariant measures.  An {\em $f$-maximizing measure} is by definition
 a probability invariant measure attaining the maximum in (\ref{Max}).

 %Since $f$ is bounded, we can assume that $f$ is negative (i.e taking values in $[-\infty, 0]$).
 What we shall be mostly interested in  is as follows: $X$ is the circle $\mathbb{T}=\mathbb{R}/\mathbb{Z}$, $Tx=qx\mod 1$ for some integer $q\ge 2$, and
 $$ f(x) = \log |\varphi(x)|,$$
 where $\varphi: X\to \R$ is an analytic function not identically zero and moreover,
 $$\varphi''(x)\varphi(x) < \varphi(x)^2$$
 whenever $\varphi(x)\not=0$. That is to say, on any interval where
 $\varphi(x)\not =0$, $\log |\varphi|$ is concave.
 %continuous function, vanishing on a finite set. %not identically zero but may vanish somewhere.
 %{\em like the characteristic function of a closed %set}.~\marginpar{delete?}
 %The most interesting case that we shall deal with is as follows:  $\varphi$
 %is a $1$-periodic function vanishing at one point $b\in [0, 1)]$
 %of class $C^2((b, b+1))$ such that $\log |\varphi|$ is strictly concave in $(b, b+1)$, i.e.
 %$$
 %\forall x \in (b, b+1), \
 %$$
 Such a function $f$ has only  singularities of logarithm type, i.e. if $b$ is a singular point then
 %We shall even assume that $\varphi$ is analytic in $\mathbb{T}$ so that
 %$f$ has a logarithmetic singularity at $b$ i.e
 $$
 \log \varphi (x) \asymp \log |x -b|
 $$
 holds in a neighborhood of $b$.
 A typical example is $\varphi(x)=\frac{\sin \pi q(x+c)}{\sin \pi (x+c)}$ (see Figure \ref{fig:001}).

 \begin{figure}[htb]
 	\centering
 	\includegraphics[width=0.45\linewidth]{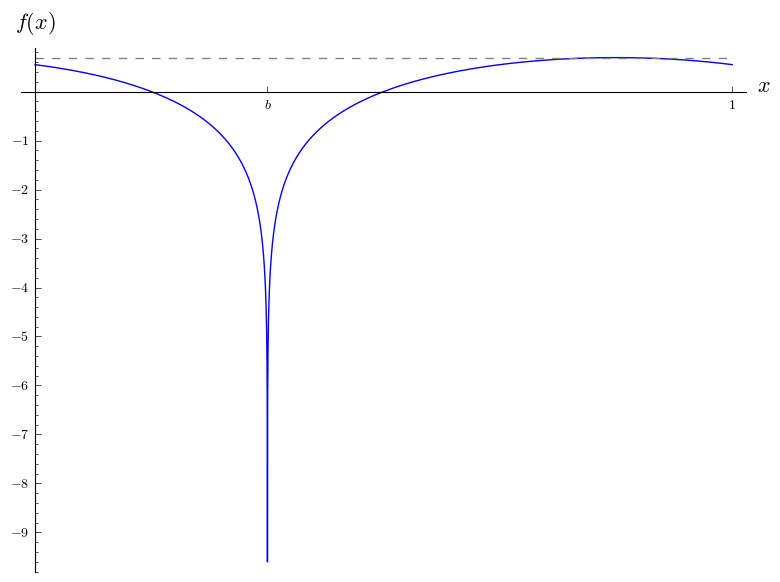}
 	\includegraphics[width=0.45\linewidth]{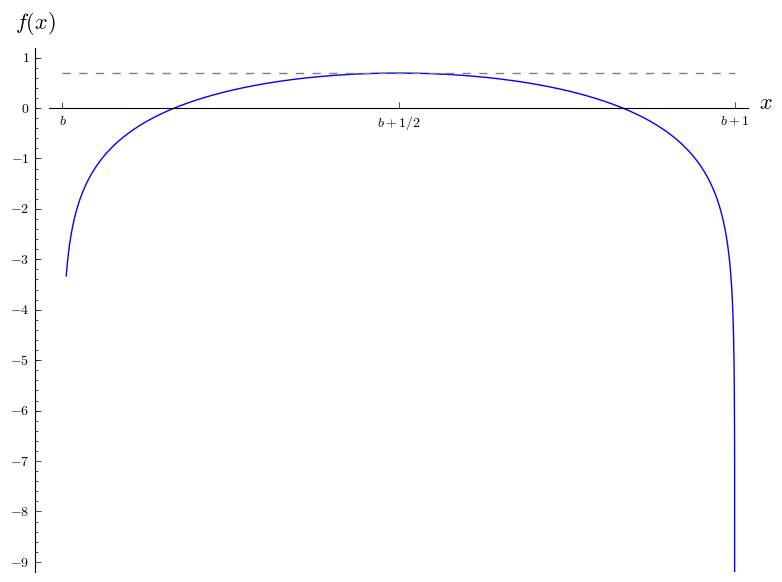}
 	\caption{The graphs of $\log|2\sin \pi (x-b)|$ on the intervals $ [0,1] $ and $ [b,b+1] $ with $b=1/3$.}
 	\label{fig:001}
 \end{figure}
\iffalse
\begin{figure}[htb]
	\centering
	%\includegraphics[width=0.45\linewidth]{01}
	\includegraphics[width=0.65\linewidth]{q=6}
	\caption{The graphs of $\log|\frac{\sin q \pi x}{\sin \pi x}|$ on the interval $ [-q^{-1},1-q^{-1}] $  with $q=6$}
	\label{fig:001}
\end{figure}
\fi

\iffalse
  Another problem is the following minimization
 \begin{equation}\label{Min}
 \alpha_f:= \inf_{\mu\in \mathcal{M}_T} \int f(x) d\mu(x).
 \end{equation}
  An {\em $f$-minimizing measure} is by definition
 a probability invariant measure attaining the minimum in (\ref{Min}).
\fi
%\marginpar{removed}

\medskip

 \subsection{Maximization}
 The points (1) and (2) in the following theorem were proved by Jenkinson \cite{J2006}. They were discussed in \cite{CG} for continuous function $f$.  The point (2) provides  three different ways to describe the maximization (\ref{Max}) through time averages along orbits.
 The point (3) provides a fourth way, using periodic points, in the case of the  dynamics $T_q$.

 Let $R(f)$ be the set of $x$
 such  $\lim_n n^{-1}S_nf(x)$ exists, where
 $$ S_n f(x) = \sum_{k=0}^{n-1}f(T^k x).$$
 %Denote by $\mathcal{P}_T$ the set of periodic measures.
%\marginpar{deleted}

 \begin{theorem} \label{DefBeta} Suppose that $f$ is upper semi-continuous. \\
 	\indent {\rm (1)}
 	The map $\mu \mapsto \int f d\mu$
 is upper semi-continuous so that the supremum in (\ref{Max}) defining $\beta_f$ is attained.\\
 \indent {\rm (2)}
 The maximum value $\beta_f$ is equal to
 \begin{equation*}\label{3Max}
 \sup_{x \in R(f)} \lim_{n\to\infty} \frac{S_n f(x)}{n}
 =\sup_{x \in X} \limsup_{n\to\infty} \frac{S_n f(x)}{n}
 =
 \lim_{n\to\infty}\max_x   \frac{S_n f(x)}{n}.
 \end{equation*}
 \indent {\rm (3)}  Assume $X=\mathbb{T}$, $T(x)=qx\mod 1$,
 and $f(x) = \log |\varphi(x)|$ with $\varphi$ an analytic function having a finite number of zeros. We have
  \begin{equation}\label{Max2}
  \beta_{\log |\varphi|} = \sup_{\mu \in \mathcal{P}_T} \int \log |\varphi| d\mu,
  \end{equation}
  where $\mathcal{P}_T$ denote the collection of all $T$-invariant probability measures supported on periodic orbits.
 \end{theorem}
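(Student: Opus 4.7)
The plan is to handle the three claims in order. For (1), I would represent the upper semi-continuous function $f$ as a pointwise infimum $f=\inf_k f_k$ of a decreasing sequence of continuous functions (standard for bounded-above u.s.c.\ functions on a metric space); then $\mu\mapsto\int f\,d\mu=\inf_k\int f_k\,d\mu$ is the infimum of weak-$*$ continuous linear functionals, hence upper semi-continuous on the weak-$*$ compact space $\mathcal{M}_T$, and the supremum $\beta_f$ is attained by general nonsense.

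For (2), I would set $a_n:=\max_x S_n f(x)$ and observe subadditivity $a_{n+m}\le a_n+a_m$ (from $S_{n+m}f=S_nf+S_mf\circ T^n$ and the definition of $\max$), so by Fekete's lemma $L:=\lim_n a_n/n=\inf_n a_n/n$ exists. The chain
\[
\sup_{x\in R(f)}\lim_n\frac{S_n f(x)}{n}\ \le\ \sup_{x}\limsup_n\frac{S_n f(x)}{n}\ \le\ L
\]
is immediate from $S_n f(x)\le a_n$. To close it in the other direction I would take $x_n$ achieving $a_n$, form the empirical measures $\mu_n=n^{-1}\sum_{k=0}^{n-1}\delta_{T^k x_n}$, extract a weak-$*$ subsequential limit $\mu\in\mathcal{M}_T$ (which is automatically $T$-invariant), and use (1) to conclude $\int f\,d\mu\ge\limsup_n\int f\,d\mu_n=L$, giving $L\le\beta_f$. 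Finally, since $\mu\mapsto\int f\,d\mu$ is affine and upper semi-continuous on $\mathcal{M}_T$, ergodic decomposition of any maximizer produces an ergodic maximizer $\mu^*$, and Birkhoff's theorem (applicable since $\beta_f>-\infty$ forces $f\in L^1(\mu^*)$) provides a $\mu^*$-generic $x\in R(f)$ with $\lim_n S_n f(x)/n=\beta_f$, yielding $\beta_f\le \sup_{R(f)}\lim$.

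For (3), where the specific structure of $Tx=qx\bmod 1$ is essential, my plan is to take the ergodic maximizer $\mu^*$ together with a $\mu^*$-generic $x$, and form the period-$n$ periodic point $y_n$ by truncating the $q$-expansion of $x$ after $n$ digits and repeating it. Then $|T^k y_n-T^k x|\le q^{-(n-k)}$ for $0\le k<n$; a Ces\`aro estimate using this bound shows $\mu_{y_n}\to\mu^*$ weak-$*$, so upper semi-continuity already gives $\limsup_n\int f\,d\mu_{y_n}\le\beta_f$. The hard part will be the matching lower bound $\liminf_n\int f\,d\mu_{y_n}\ge\beta_f$, because $f=\log|\varphi|$ has logarithmic singularities at the finite zero set $Z=\varphi^{-1}(0)$. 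Writing $\log|\varphi(z)|=\sum_i m_i\log|z-b_i|+h(z)$ with $h$ bounded and continuous reduces the problem to controlling $\sum_{k=0}^{n-1}\bigl[\log|T^k y_n-b_i|-\log|T^k x-b_i|\bigr]$; my approach would be a dyadic split of the indices $k$ according to the ratio $d(T^k x,Z)/q^{-(n-k)}$, combined with the fact that $\beta_f>-\infty$ forces $\log d(\cdot,Z)\in L^1(\mu^*)$, so that Birkhoff quantitatively controls how often and how closely the orbit of $x$ approaches $Z$, ultimately yielding $|S_n f(y_n)-S_n f(x)|=o(n)$. This quantitative shadowing comparison of $f$-values near its singularities is the main obstacle I anticipate; the remaining ingredients are standard in ergodic optimization.
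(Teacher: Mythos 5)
Parts (1) and (2) of your proposal are correct and follow the standard ergodic-optimization arguments (u.s.c.\ via decreasing continuous approximations, Fekete subadditivity, empirical measures, ergodic decomposition plus Birkhoff); the paper itself cites these to Jenkinson and only supplies the one-line subadditivity remark, so there is nothing to compare there.

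Part (3) has a genuine gap, and it sits exactly where you anticipate trouble. The shadowing bound $|T^k y_n - T^k x|\le q^{-(n-k)}$ degenerates for $k$ near $n$: for instance $|T^{n-1}y_n-T^{n-1}x|\le q^{-1}$, which is not small at all. If $T^{n-1}x$ is at distance $q^{-2}$, say, from a zero $b_i$ of $\varphi$, then $T^{n-1}y_n$ may be pushed arbitrarily close to (or exactly onto) $b_i$, making $S_n f(y_n)$ uncontrolled or $-\infty$. The hypothesis $\log d(\cdot,Z)\in L^1(\mu^*)$ and Birkhoff's theorem give only \emph{average} control of $d(T^k x,Z)$; they give no \emph{pointwise} lower bound of the form $d(T^k x,Z)\gtrsim q^{-(n-k)}$, which is what is actually needed to keep $y_n$'s orbit away from $Z$. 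A dyadic split on the orbit of $x$ cannot fix this, because the terms you must bound from below are $\log d(T^k y_n,Z)$, not $\log d(T^k x,Z)$, and at the bad indices there is simply no a priori bound on the former.

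The paper's proof supplies precisely the missing device: a Pliss-lemma argument (the ``Claim'' in the proof) converting the Birkhoff estimate $\frac{1}{n}\sum_{i<n}\mathrm{dep}_{\delta_0}(T^ix)<\frac12$ into a positive-density sequence of hyperbolic times $N$ at which the \emph{pointwise} cone condition $d(T^jx,\mathcal{C})\ge q^{-(N-j)}\delta_0$ holds for \emph{every} $0\le j<N$. Closing the orbit between two such close-return hyperbolic times $N_{k_0}<N_k$ (rather than truncating at arbitrary $n$) produces a periodic point $p$ with $d(T^jy,T^jp)<3\eta\,d(T^jy,\mathcal{C})$ for all $j<s$, so the periodic orbit stays a definite proportional distance from the singularities and the comparison of Birkhoff sums goes through via a modulus-of-continuity estimate away from $\mathcal{C}$. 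Your construction by naive truncation of a Birkhoff-generic $x$ would need to be upgraded by exactly this selection of safe closing times; without it the lower bound does not follow, and this is not a ``standard remaining ingredient'' in ergodic optimization but a borrowed non-uniform-hyperbolicity technique.
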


\begin{proof} (1) and (2) were proved in \cite{J2006}. Here we only
	give an explanation that the last limit in (2) exists.
	Indeed, putting $S_n=\max_{x\in X}S_n f(x)$,  we have $S_{n+m}\le S_n+S_m$,
	%Then
	%P_n(\varphi,x):=\prod_{j=0}^{n-1}|\varphi(T^j(x))|.
	%$$
	%We have $S_nf(x) = \log P_n(\varphi, x)$. Let
	%$$
	%P_n:=P_n(\phi):=\max_{x\in X} P_n(\varphi,x).
	%$$
	%Then $P_{n+m}\le P_n P_m$
	so the  limit exists.
	%~\marginpar{changed}
	
	(3) Let us prove (\ref{Max2}). Obviously the left hand side is not smaller than the right hand side. So it suffices to prove that for any $\mu\in\mathcal{M}_T$ with
	$\int \log |\varphi| d\mu=:\alpha>-\infty$ and any $\eps>0$, there exist a periodic point $p\in \T$ of period $s$ such that
	\begin{equation}\label{eq:goal}
	\sum_{j=0}^{s-1} \log |\varphi(T^j p)|\ge s(\alpha-\eps).
	\end{equation}
	By the ergodic decomposition, we may assume that $\mu$ is ergodic.
	
	We first prove the following claim.\\
	{\bf Claim.}
	{\em Let $\mathcal{C}$ denote the set of zeros of $\varphi$. There exists $\delta_0>0$ such that for $\mu$-a.e. $x\in
	    \T$,    there exists an arbitrarily large positive integer $N$ such that
    }
 $$d(T^j(x),\mathcal{C})\ge q^{-(N-j)}\delta_0\mbox{ for all }0\le j<N.$$

	To prove the claim,  let $\mathcal{C}_\delta :=\{x: d(x,\mathcal{C})<\delta\}$ be the $\delta$-neighbourhood
	of $\mathcal{C}$.
	Since $\log|\varphi|$ is $\mu$-integrable, we must have $\mu(\mathcal{C})=0$ and then $\int_{\mathcal{C}_\delta} \log|\varphi| d\mu\to 0$ as $\delta\to 0$.
	Put $$\text{dep}_\delta(x)=\left\{\begin{array}{ll}
	-\log_q \dist(x, \mathcal{C}) & \mbox{ if } x\in\mathcal{C}_\delta;\\
	0 &\mbox{ otherwise.}
	\end{array}
	\right.$$
	Since $\varphi$ is analytic and non-constant, for any $x_0\in \mathcal{C}$ we have
	%\footnote{We don't really need the analyticity of $\varphi$, but %this local behavior of $\log |\varphi|$.}
	\begin{equation}\label{eq: logf}
	\log |\varphi(x)| = a + m \log d(x, x_0) + o(1)
	\ \ \ {\rm as} \ \ x \to x_0
	\end{equation}
	for some real number $a$ and integer $m\ge 1$.
	Then
	there exist $\delta_0>0$ and $C>0$ such that
	$$
	 \text{dep}_\delta(x)
	\le C|\log\varphi(x)|\quad
	(\forall \delta\in (0,\delta_0],  \forall x\in \mathcal{C}_\delta)
	$$
	Then $\int_{\T} \text{dep}_\delta(x) d\mu\to 0$ as $\delta\to 0$. Choose $\delta_0\in (0,1)$ such that
	$$\int_{\T} \text{dep}_{\delta_0}(x) d\mu <\frac{1}{2}.$$
	Since $\mu$ is ergodic, for $\mu$-a.e. $x\in \T$,
	\begin{equation}\label{eqn:Birkdep}
	\frac{1}{n}\sum_{i=0}^{n-1} \text{dep}_{\delta_0} (T^i x)<\frac{1}{2}, \mbox{ when } n\gg 1.
	\end{equation}
	By Pliss Lemma \cite{Pl}, it follows that there is an arbitrarily large integer $N$ such that for any $0\le j<N$,
	$$\sum_{i=j}^{N-1} \text{dep}_{\delta_0}(T^i(x))< N-j,$$
	and in particular,
\begin{equation}\label{eqn:hyptime}
d(T^j(x), \mathcal{C})\ge \min(q^{-(N-j)},\delta_0)\ge q^{-(N-j)}\delta_0.
\end{equation}
 The claim is proved.
	
		Let us now complete the proof. Fix $\delta_0>0$ as we have chosen above and choose a point $x\in \T$  such that the conclusion of the Claim holds for a sequence of positive integers $N_1<N_2<\cdots$. Choose $x$ suitably so that
	\begin{equation}\label{eqn:Birklogvarphi}
	\frac{1}{n}\sum_{i=0}^{n-1} \log |\varphi(T^i x)|\to \alpha, \mbox{ as } n\to\infty.
	\end{equation}
	Given $\eps>0$, let $\eta>0$ be small such that
	\begin{equation}\label{eqn:smalldist}
	\forall y, y'\in \T, d(y,y')<3\eta d(y,\mathcal{C})\Rightarrow |\log \varphi(y)-\log\varphi(y')|<\frac{\eps}{2}.
	\end{equation}
	Let $z$ be an accumulation point of $\{T^{N_k}(x)\}_{k=1}^\infty$. First fix $k_0$ such that $d(z,T^{N_{k_0}}(x))<\eta\delta_0$.
	Then find $k\gg k_0$ such that $d(T^{N_k}(x),z)<\eta\delta_0$ and
	$$
	     \frac{1}{s}\sum_{j=0}^{s-1} \log |\varphi (T^j(y))|> \alpha-\frac{\eps}{2},
	$$
	where $y=T^{N_{k_0}}(x)$ and $s=N_k-N_{k_0}$. Then
	\begin{equation}\label{eq:faraway}
	d(T^j(y),\mathcal{C})\ge q^{-(s-j)}\delta_0  \quad (\forall  0\le j<s).
	\end{equation}
	We can choose $N_{k_0}$ and $N_k$ such  that $q^{-s}<\eta$.
	
	Let $J:=[y-q^{-s}\delta_0, y+q^{-s}\delta_0]$.
	Since $T^s$ maps J bijectively onto $[T^s(y)-\delta_0,T^s(y)+\delta_0]\supset J$, there exists $p\in J$ such that $T^s(p)=p$.
	%Moreover, $d(T^s(y),p)=2^s d(y,p)$, so $$d(y,p)=d(T^s(y), %p)/(2^s-1)< 2\eta \delta_0/(2^s-1).$$
	 Notice that  for  $0\le j<s$,
	$$
	d(T^j(y), T^j(p))=q^{-(s-j)} d(T^sy , p)<q^{-s+j}\cdot 3\eta \delta_0 < 3\eta d(T^j(y),\mathcal{C}),
	$$
	because $d(T^s y,z)<\eta \delta_0$, $d(z, y)<\eta \delta_0$
	and $d(y, p) \le q^{-s}\delta_0<\eta\delta_0$.
	For the last inequality we used (\ref{eq:faraway}).
	According to (\ref{eqn:smalldist}), this
	implies that
	$$\log \varphi (T^j(p))-\log\varphi(T^j(y))>-\frac{\eps}{2}.
	\quad (\forall 0\le j <s).$$
	Therefore
	$$\frac{1}{s}\sum_{j=0}^{s-1} \log \varphi (T^j(p))>\frac{1}{s}\sum_{j=0}^{s-1}\log\varphi(T^j(y))-\frac{\eps}{2}
	>\alpha-\eps.
	$$
	Thus (\ref{eq:goal}) is proved.	
\end{proof}

 \section{Gelfond exponent and maximization problem}

 We approach the computation of Gelfond exponent from the point of ergodic optimization. Throughout we fix an integer $q\ge 2$ and will drop the superscript $q$ from notation. Recall that  $T$ denotes the map $x\mapsto qx \mod 1$ on the circle $\T=\R/\Z$. For each $c\in \R$, put
 $$f_c(x):=\log \varphi_c(x)  \ {\rm with }\
 %\varphi_c(x)=1+\cos 2\pi (x+ c).
    \varphi_c(x)=\left|\frac{\sin \pi q(x+ c)}{ \sin \pi (x+c)}\right|.
 $$
 So $\varphi_c(x)=\varphi_0(x+c)$ and $f_c(x)=f_0(x+c)$.
% We shall denote
 %$$
 %\beta(c)=\sup_{\mu\in\mathcal{M}_T} \int_\mathbb{T} f_c d\mu.
% $$

 Fix $x\in\R$ and  consider the function $w_x^{(c)}:\N\to \C$ defined by
 $$w_x^{(c)}(n): = t_n^{(c)} e^{2\pi i n x} =e^{2\pi i(cS_q(n)+nx)},$$
 which is   $q$-multiplicative in the sense that
 $$
 w_x^{(c)} (a q^t + b)
 = w_x^{(c)} (a q^t) w_x^{(c)} (b)
 $$
 for all non-negative integers $a, b$
 and $t$ such that $b<q^t$ (see \cite{Gelfond1968}).
 Using this multiplicativity we can
 establish a relationship between Gelfond exponents and dynamical
 maximizations.

 \subsection{Gelfond exponent and maximization}
  Indeed, the $q$-multiplicativity gives rise to
\begin{equation*}%
        \sigma_{q^n}^{(c)}(x) = \prod_{k=0}^{n-1} \sum_{j=0}^{q-1} w_x^{(c)}(j\cdot q^k).
        %|w_x^{(c)}(0) + w_x^{(c)}(q^k)|=q^n \prod_{j=0}^{n-1}|\cos \pi (2^j x +c)|.
\end{equation*}
Since the above sum is equal to
$$%\sum_{j=0}^{q-1} w_x^{(c)}(j\cdotp q^k)=
\sum_{j=0}^{q-1} e^{2\pi i j (c+q^k x)}=\frac{1-e^{2\pi i q(c+q^k x)}}{1-e^{2\pi i(c+q^k x)}}
=\frac{e^{\pi i q(c+q^k x)}}{e^{\pi i (c+q^k x)}} \frac{\sin \pi q(q^k x+c)}{\sin \pi (q^k x+c)},$$
we get % this gives us
\begin{equation}\label{S-P}
\left|\sigma_{q^n}^{(c)}(x)\right|
%=\prod_{k=0}^{n-1}   \frac{\sin \pi q(q^k x+c)}{\sin \pi (q^k x+c)}
= \prod_{k=0}^{n-1} \varphi_c (q^k x).
\end{equation}
Therefore, (\ref{eqn:BN})  is equivalent to the following estimation:
\begin{equation}\label{eqn:BN'}
\sup_{x\in \mathbb{R}}\prod_{j=0}^{n-1}\varphi_c(q^j x)=O(q^{n\gamma}).
\end{equation}
In particular, $\gamma(c)$ is also the infimum of $\gamma$ for which (\ref{eqn:BN'}) holds. The function $\gamma(\cdot)$ has the following symmetry.

\begin{prop}\label{symmetry} We have $\gamma(c) = \gamma(1-c)$ for all $c \in [0,1]$.  Moreover, for all $n \ge 1$ and all $x \in [0,1]$ we have
$$
       \prod_{j=0}^{n-1} \varphi_c(q^j x) =   \prod_{j=0}^{n-1} \varphi_{1-c}(q^j (1-x)).
$$
\end{prop}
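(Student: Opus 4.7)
The plan is to prove the product identity directly by a pointwise symmetry of $\varphi_c$, and then deduce the equality $\gamma(c)=\gamma(1-c)$ from the equivalence of (\ref{eqn:BN}) and (\ref{eqn:BN'}) already recorded in the paper.

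First I would establish the functional identity
$$\varphi_{1-c}(-x)=\varphi_c(x)\qquad (x\in\R).$$
This is a one-line trigonometric verification: writing out
$$\varphi_{1-c}(-x)=\left|\frac{\sin\pi q(1-c-x)}{\sin\pi(1-c-x)}\right|=\left|\frac{\sin(\pi q-\pi q(x+c))}{\sin(\pi -\pi(x+c))}\right|,$$
and using that $q\in\Z$ gives $|\sin(\pi q-\theta)|=|\sin\theta|$ and $|\sin(\pi-\theta)|=|\sin\theta|$, we land exactly on $\varphi_c(x)$.

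Next I would use that $\varphi_{1-c}$ is $1$-periodic (again because $q\in\Z$, so $|\sin\pi q(y+1)|=|\sin\pi qy|$ and $|\sin\pi(y+1)|=|\sin\pi y|$). Since $q^j\in\Z$, for any $x$ we have
$$q^j(1-x)=q^j-q^jx\equiv -q^jx\pmod 1,$$
whence $\varphi_{1-c}(q^j(1-x))=\varphi_{1-c}(-q^jx)=\varphi_c(q^jx)$ by the identity of the previous paragraph. Taking the product over $0\le j\le n-1$ yields
$$\prod_{j=0}^{n-1}\varphi_c(q^jx)=\prod_{j=0}^{n-1}\varphi_{1-c}(q^j(1-x)),$$
which is the displayed identity of the proposition.

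Finally, taking the supremum over $x\in\R$ on both sides (and using that the map $x\mapsto 1-x$ is a bijection of $\R$, so the two suprema coincide) gives
$$\sup_{x\in\R}\prod_{j=0}^{n-1}\varphi_c(q^jx)=\sup_{y\in\R}\prod_{j=0}^{n-1}\varphi_{1-c}(q^jy)\qquad(n\ge 1).$$
By the equivalence of (\ref{eqn:BN}) and (\ref{eqn:BN'}) noted just above the statement of the proposition, $\gamma(c)$ and $\gamma(1-c)$ are both equal to the infimum of $\gamma$ making the common quantity $O(q^{n\gamma})$, so $\gamma(c)=\gamma(1-c)$. No step looks delicate; the only thing to be careful about is that the identity $|\sin(\pi q-\theta)|=|\sin\theta|$ uses integrality of $q$, which is built into the standing hypothesis.
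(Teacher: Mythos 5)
Your proof is correct and follows essentially the same route as the paper: the paper phrases the key point as the parity and $1$-periodicity of $\varphi_0$ (giving $\varphi_c(x)=\varphi_0(x+c)=\varphi_0(-x+(1-c))=\varphi_{1-c}(-x)$) together with $-q^n x\equiv q^n(1-x)\pmod 1$, which is exactly your pair of observations written a bit more compactly. No substantive difference.
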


\begin{proof} This follows simply from the parity and the $1$-periodicity of $\varphi(x):=\varphi_0(x)$ which gives
$$
      \varphi(x+c) = \varphi (-x - c) = \varphi (-x  + (1-c)),
$$
and of the fact  $-q^n x = q^n(1-x) \mod 1$.
%Thus
%$$
%     \max_x \prod_{k=0}^{n-1} \varphi_{c}(2^k x) = \max_x %\prod_{k=0}^{n-1} \varphi_{1-c}(2^k x).
%$$
\end{proof}

By definition, the sequences  $(t_n^{(2;3/4)})$ and $(t_n^{(2;1/4)})$  are related in the following way
$$
      \frac{t_n^{(2;3/4)}}{t_n^{(2;1/4)}} = t_n^{(2;1/2)}=(-1)^{S_2(n)}.
$$
An amazing relation!  Apparently, $(t_n^{(2;3/4)})$ and $(t_n^{(2;1/4)})$ seem very different, but  $|\sigma_{2^n}^{(2;3/4)}(x)| = |\sigma_{2^n}^{(2;1/4)}(1-x)|$.
%This seems not clear from the initial definition by (\ref{GTM}).
%\marginpar{I do not agree with the last sentence.}

\begin{prop}\label{prop:b-g} We have  $\gamma(c)= \frac{\beta(c)}{\log q}$ for each $c$.
\end{prop}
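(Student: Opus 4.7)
The plan is to reduce the supremum of $|\sigma_N^{(c)}|$ over all $N$ to the subsequence $N=q^n$ via the base-$q$ digit decomposition, and then to identify the asymptotics along this subsequence with the ergodic maximization $\beta(c)$ using Theorem \ref{DefBeta}. Writing $M_n(c):=\max_{x\in\T}|\sigma_{q^n}^{(c)}(x)|$, the product formula \eqref{S-P} gives $\log M_n(c)=\max_x S_n f_c(x)$, and the identity $\prod_{k=0}^{n+m-1}\varphi_c(q^k x)=\prod_{k=0}^{n-1}\varphi_c(q^k x)\cdot\prod_{j=0}^{m-1}\varphi_c(q^j(q^n x))$ makes $\{\log M_n(c)\}$ subadditive. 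Hence $\lim_n \log M_n(c)/n=\inf_n \log M_n(c)/n$, and by Theorem \ref{DefBeta}(2) this limit equals $\beta(c)$. For the lower bound, subadditivity yields $\log M_n(c)/n\ge \beta(c)$, so $M_n(c)\ge q^{n\beta(c)/\log q}$; combined with $M_n(c)=\sup_x|\sigma_{q^n}^{(c)}(x)|\le C q^{n\gamma}$ whenever \eqref{eqn:BN} holds, this forces $\gamma(c)\ge \beta(c)/\log q$.

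For the upper bound, I would decompose a general $N$ with base-$q$ expansion $N=\sum_{k=0}^{K-1}d_k q^k$ into the blocks
\[
B_k:=\bigl\{n<N: d_i^{(n)}=d_i\text{ for }i>k,\ d_k^{(n)}<d_k\bigr\}\quad (0\le k\le K-1),
\]
which partition $[0,N)$ according to the first position where $n$ deviates from $N$ read from the top. For $n\in B_k$, writing $n=R_k+d_k^{(n)}q^k+m$ with $R_k=\sum_{i>k}d_iq^i$ and $m=\sum_{i<k}d_i^{(n)}q^i<q^k$, the three summands have disjoint digit supports, so $S_q(n)=\sum_{i>k}d_i+d_k^{(n)}+S_q(m)$. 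The $q$-multiplicativity of $n\mapsto t_n^{(c)}e^{2\pi inx}$ then yields
\[
\sum_{n\in B_k} t_n^{(c)}e^{2\pi inx}=e^{i\theta_k(x)}\left(\sum_{j=0}^{d_k-1}e^{2\pi ij(c+q^k x)}\right)\sigma_{q^k}^{(c)}(x)
\]
for a real phase $\theta_k(x)$. Bounding the inner geometric sum by $d_k\le q-1$ and applying the triangle inequality gives
\[
|\sigma_N^{(c)}(x)|\le (q-1)\sum_{k=0}^{K-1} M_k(c).
\]

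To conclude, fix $\epsilon>0$. Since $\log M_k(c)/k\to \beta(c)$, there exists $C_\epsilon>0$ with $M_k(c)\le C_\epsilon q^{k(\beta(c)/\log q+\epsilon)}$ for every $k\ge 0$. Summing the resulting geometric series and using $q^{K-1}\le N$ produces $\sup_x|\sigma_N^{(c)}(x)|\le C'_\epsilon N^{\beta(c)/\log q+\epsilon}$, whence $\gamma(c)\le\beta(c)/\log q+\epsilon$ for every $\epsilon>0$. Letting $\epsilon\to 0$ gives the reverse inequality, and combined with the first paragraph finishes the proof. The principal technical point is the digit decomposition: one must track that the digit supports in the split $n=R_k+d_k^{(n)}q^k+m$ are disjoint so that $S_q$ is genuinely additive and the character $t_n^{(c)}e^{2\pi inx}$ factors, which is precisely what permits the extraction of $\sigma_{q^k}^{(c)}$ from $\sum_{n\in B_k}$ and reduces the general $N$ to the known subsequence.
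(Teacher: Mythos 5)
Your proof is correct and follows the same overall strategy the paper intends: express $|\sigma_{q^n}^{(c)}(x)|$ as the ergodic product via \eqref{S-P}, use subadditivity of $\log M_n(c)$ together with Theorem~\ref{DefBeta}(2) to identify $\lim_n \frac1n\log M_n(c)=\beta(c)$, and then translate this into a statement about the Gelfond exponent. The paper's own proof is a one-liner because, just before the proposition, it \emph{asserts} that \eqref{eqn:BN} (over all $N$) is equivalent to \eqref{eqn:BN'} (over $N=q^n$), and then simply invokes Theorem~\ref{DefBeta}. You supply the proof of that equivalence explicitly, and this is the genuine content of your write-up: the base-$q$ digit partition of $[0,N)$ into the blocks $B_k$, the observation that the three pieces of $n=R_k+d_k^{(n)}q^k+m$ have disjoint digit supports so $q$-multiplicativity factors the block sum through $\sigma_{q^k}^{(c)}$, and the resulting bound $|\sigma_N^{(c)}(x)|\le (q-1)\sum_{k<K}M_k(c)$. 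That argument is correct. One small point worth making explicit when you sum the geometric series in the final step: you need $\beta(c)/\log q+\epsilon>0$ for the estimate $\sum_{k<K}q^{k\alpha}\lesssim q^{K\alpha}\lesssim N^\alpha$ to go through; this follows e.g.\ from Parseval, $\int_{\T}|\sigma_{q^n}^{(c)}|^2=q^n$, which gives $M_n(c)\ge q^{n/2}$ and hence $\beta(c)\ge\frac12\log q>0$. With that noted, your argument fills in a gap the paper glosses over rather than taking a different route.
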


This is a consequence of (\ref{eqn:BN'}) and Theorem \ref{3Max}.

\section{Maximization for $f_c$ and Sturmian measures}\label{sec:max}
In this section, we consider the maximizing problem in our most interesting particular case.
 Let $T$ denote the map $x\mapsto qx$ on the circle $\T=\R/\Z$, and for each $c\in \R$, put
$$f_c(x):=\log \varphi_c(x), \ \ \ {\rm with} \ \  \varphi_c(x)= \left|\frac{\sin \pi q (x+c)}{\sin \pi (x+c)}\right|.$$
%Let $\mathcal{M}_T$ denote the collection of all $T$-invariant Borel %probability measures.
Recall that our object of study is to find
$$\beta(c)=\sup_{\mu\in\mathcal{M}_T} \int_\mathbb{T} f_c d\mu.$$

For each $\lambda\in \R$, there is a unique $T$-invariant measure $\frak{S}_\lambda$ that is supported in circle arc $C_\lambda: =[\lambda, \lambda+q^{-1}]$ $\mod 1$, called $q$-Sturmian measure. These measures $\frak{S}_\lambda$ are ergodic and $\frak{S}_\lambda=\frak{S}_{\lambda'}$ whenever $\lambda'-\lambda\in\mathbb{Z}$. See Appendix A for a proof of these facts. %~\marginpar{added}
%In the case $q=2$, these measures are known as {\em Sturmian %measures}. We shall call such a measure a {\em $q$-Sturmian %measure}.

The main result of this section is the following theorem.

\begin{theorem}\label{thm:main} Fix an integer $q\ge 2$.%~\marginpar{added} 
	For any $c\in \R$, $f_c$ has a unique maximizing measure $\nu_c$. The measure $\nu_c$ is a $q$-Sturmian measure. Moreover, there exists a constant $C>0$, which is independent of $c$, such that
$$\sum_{i=0}^{n-1} f_c(q^n x)-n \beta(c)\le C,$$
for each $x\in \mathbb{R}$ and each $n\ge 1$.
\end{theorem}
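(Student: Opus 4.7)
My overall strategy follows the Bousch/Conze--Guivarc'h paradigm in ergodic optimization: I will construct a continuous \emph{subaction} $u : \T \to \R$ satisfying $f_c - \beta(c) \le u\circ T - u$, and use it to locate the Mañé set. The three main tasks are (i) constructing $u$, (ii) showing that the Mañé set lies in a closed arc of length $1/q$, and (iii) deducing Sturmianity, uniqueness, and the uniform bound simultaneously.

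Existence of a maximizing measure is automatic from Theorem~\ref{DefBeta}(1), since $f_c \le \log q$ is upper semi-continuous, and $\beta(c) > -\infty$ (witnessed, e.g., by Lebesgue measure). I would build $u$ as a cluster point of the Lax--Oleinik iterates
$$
u_n(x) := \max_{T^n z = x}\sum_{k=0}^{n-1}\bigl(f_c(T^k z) - \beta(c)\bigr),
$$
which are bounded above by $0$ thanks to Theorem~\ref{DefBeta}(2). The key input is a uniform lower bound on $u_n$, obtained by exhibiting, for each $x$, a length-$n$ preimage chain whose Birkhoff average is within a constant of $\beta(c)$: at each step one takes the ``greedy'' inverse branch, namely the preimage landing closest to the maximum point $-c$ of $f_c$. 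This greedy branch has $f_c$-value close to $\log q$, the discrepancy being controlled by concavity of $f_c$ on the ``big arc'' $(-c-1/q,\,-c+1/q)$ of length $2/q$.

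The technical heart of the argument is to show that the Mañé set $K := \{x \in \T : f_c(x) - \beta(c) = u(Tx) - u(x)\}$ lies in a closed arc $I_c$ of length $1/q$. Every maximizing measure is supported in $K$. I would use the elementary identity
$$
f_c\!\bigl((y+k)/q\bigr) = \log|\sin\pi(y + qc)| - \log\bigl|\sin\pi\bigl((y+k)/q + c\bigr)\bigr|,
$$
which shows that among the $q$ preimages of any fixed $y \in \T$, the $f_c$-maximizing one is the unique preimage lying in $I_c := [-c - 1/(2q),\, -c + 1/(2q)]$. Using the concavity of $f_c$ on each arc between singularities, I would then argue that the full Lax--Oleinik maximizer of $f_c + u$ over preimages coincides with this greedy choice, so that $u$ cannot perturb the argmax. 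Consequently $K \subset I_c$, and any $T$-invariant measure on $I_c$ is $q$-Sturmian by definition. Uniqueness of the Sturmian measure on a given arc (Appendix A) then yields uniqueness of $\nu_c$.

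The uniform bound is immediate from iteration of the subaction inequality:
$$
\sum_{k=0}^{n-1} f_c(T^k x) - n\beta(c) \le u(T^n x) - u(x) \le 2\sup_{\T}|u|,
$$
so we may take $C := 2\sup_{c,x}|u(x)|$; uniformity in $c$ follows because the entire construction is translation-equivariant in $c$ (recall $f_c(x) = f_0(x+c)$), so a single bound suffices for all $c \in [0,1)$. \textbf{The main obstacle} is proving that the Mañé set is confined to $I_c$---equivalently, that $u$ does not move the Lax--Oleinik argmax off the greedy preimage. This hinges on a delicate use of the concavity of $f_c$ between singularities together with a careful analysis of how $u$ varies across preimages of a common point, and is where the specific analytic shape of $f_c$ (not merely its upper semi-continuity) becomes crucial.
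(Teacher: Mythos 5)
Your Lax--Oleinik/Mañé-set framework is a reasonable alternative to the paper's Bousch--Jenkinson route (the paper works with the pre-$q$-Sturmian condition of Proposition~\ref{prop:bouschsturm} and verifies $v_c(\lambda)=0$ directly), but the proposal as written contains a concrete false claim and a faulty uniformity argument, each of which precisely masks the hard part of the theorem.

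\textbf{The Mañé set is not contained in the arc centered at $-c$.} You assert that the Lax--Oleinik argmax is always the greedy preimage, namely the one landing in $I_c:=[-c-\tfrac{1}{2q},\,-c+\tfrac{1}{2q}]$, and hence the Mañé set is contained in $I_c$. This is false, and the paper's own worked example refutes it. For $q=2$, $c=1/4$, Example 2 in Section~\ref{sec:compute} shows the maximizing measure is carried by the period-4 orbit $\{7/15,\,14/15,\,13/15,\,11/15\}$. Here $-c\equiv 3/4 \pmod 1$, so $I_c=[1/2,\,1]$, yet $7/15<1/2$, so this orbit (hence the Mañé set, hence the support of $\nu_c$) is \emph{not} contained in $I_c$. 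In general the Sturmian arc is $C_\lambda=[\lambda,\lambda+q^{-1}]$ where the left endpoint $\lambda$ is determined by the transcendental equation $v_c(\lambda)=0$; by Lemma~\ref{lem:almostcentered}, $\theta:=\lambda+q^{-1}+c$ lies in $(\tfrac{3}{8q},\tfrac{5}{8q})$, so $C_\lambda$ is \emph{roughly} but not exactly centered at $-c$, and when $\theta\neq\tfrac{1}{2q}$ one has $C_\lambda\not\subset I_c$. The correction term $\psi$ (or, in your language, the subaction $u$) genuinely moves the argmax off the greedy preimage, and quantifying by how much is exactly the content of Section~\ref{sect:PStoS}, which occupies the bulk of the paper's effort. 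You flag this as ``the main obstacle'' but the route you sketch for clearing it cannot succeed, because the statement it is meant to prove is wrong as stated.

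\textbf{Uniformity in $c$ does not follow from translation equivariance.} You claim the bound is uniform in $c$ because $f_c(x)=f_0(x+c)$. But translation by $c$ does not conjugate the dynamics: $T(x+c)=T(x)+qc\neq T(x)+c$, so the pushforward of a $T$-invariant measure under $x\mapsto x+c$ is not $T$-invariant, and neither $\beta(c)$ nor the optimal $\lambda$ depend on $c$ by a simple shift (indeed $\lambda=\textbf{c}^{-1}(c)$ is only log-Lipschitz in $c$). The paper's uniform bound comes instead from Lemma~\ref{lem:vcgamma}(2), which gives a $c$-independent $\eps>0$ with $-\tfrac1q+\eps\le\lambda+c\le-\eps$, hence a uniform $L^\infty$ bound on $\psi'$ via~(\ref{psi'}); this in turn rests on the quantitative centering estimate $\theta\in(\tfrac{3}{8q},\tfrac{5}{8q})$. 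Without such an estimate your $\sup_c\|u\|_\infty$ is not visibly finite (note also that, because $f_c$ is unbounded below near the singularities $-c+k/q$, even pointwise boundedness of the Lax--Oleinik iterates $u_n$ requires an argument beyond the greedy chain: for $c$ near $0$ the greedy chain's Birkhoff average falls below $\beta(c)$, so it does not give the lower bound you need).

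In short, the strategy is plausible in outline, but the two load-bearing claims --- that the Mañé set sits in the arc centered at $-c$, and that uniformity in $c$ is free by translation --- are both incorrect, and closing these gaps would require essentially the same delicate analysis the paper carries out in Section~\ref{sect:PStoS} and Lemma~\ref{lem:almostcentered}.
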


To prove Theorem \ref{thm:main}, we shall apply and extend the theory of Bousch-Jenkinson. An important fact that is used in the argument is that $f_c$ is strictly concave away from the singularties, or equivalently that is the same for $f_0$:
$$f_0''(x)= \pi^2\left(\frac{1}{\sin^2 \pi x}-\frac{q^2}{\sin^2 \pi qx}\right)<0.$$

\iffalse
Theorem \ref{thm:main} remain true for any  $f$ which has  a logarithmetic singularity at $b$, but is elsewhere real analytic and strictly concave on $(b, b+1)$.
That is the case for $f_c$ with $b(c) = \frac{1}{2}-c$ because
$$
    f_c''(x)=-\pi^2\sec^2(\pi(x+c))<0.
$$

Such concavity enables us to apply the method of Jenkinson \cite{J2008}, which was motivated by Bousch \cite{Bousch2000}. Our main goal in this section is to prove
Theorem \ref{thm:main}. Our proof is presented in the case of $f_c$
but there is no significant differences for the above mentioned general case.
\fi

We shall first recall the pre-Sturmian and Sturmian condition introduced by Bousch~\cite{Bousch2000}. Bousch introduced these concepts in the case $q=2$ which extends to the general case in a straightforward way.

\subsection{Pre-Sturmian condition and Sturmian condition}

%In order to prove Theorem \ref{thm:main}, we need
%to recall the pre-Sturmian and Sturmian conditions introduced by Bousch
%\cite{Bousch2000}.

 For each $\gamma\in \R$, let $$C_\gamma:=[\gamma,\gamma+{1}/{q}]\mod 1\subset \T$$
 be the arc in $\T$, starting from $\gamma$ and rotating in the anti-clockwise direction.
 %Let $\frak{S}_\gamma$ denote the unique $T$-invariant measure supported in $C_\gamma$,  called {\em $q$-Sturmian measure} or simply Sturmian %measure, see Theorem B %which does exit and is unique
 %(see also \cite{BS1994} for the case $q=2$).
% \marginpar{deleted}
  Let $C_\gamma'=[\gamma, \gamma+1/q)\mod 1\subset\T$ and
$\tau_\gamma: \T\to C_\gamma'$ denote the inverse branch of $T$ restricted on $C_\gamma'$. So $\tau (Tx)$ is the unique point in $C_\gamma'$ such that $q(\tau(Tx)-x)\in \Z$.
%Let us remark that
%\begin{equation}\label{eqn:tauT}
%\tau_\gamma (T(x))=x+\frac{1}{2}, \ \ \ \forall x\in \T^1\setminus C_\gamma'.
%\end{equation}
%Indeed,  since $T^{-1}(Tx) = \{x, x +1/2\}$, $\tau_\gamma(T(x))$ is the only preimage of $T(x)$ that is not equal to $x\mod 1$.
%~\marginpar{changed for precision}
\medskip

 The following definition comes from Bousch \cite{Bousch2000} which discusses  the case $q=2$ with $f$ supposed Lipschitzian.
 We will only assume that $f$ is Lipschtzian on $C_\lambda$.

\begin{definition} Let $f: \mathbb{T} \to [-\infty, +\infty)$ be a Borel function and let $\lambda\in\R$. 
%bounded from above such that $f$ is Lipschtzian on $C_\lambda$. 
We say that $f$ satisfies the {\em pre-q-Sturmian condition} for $\lambda$, if $f$ is Lipschitz on $C_\lambda$ and there exists a Lipschitz function $\psi:\T\to \R$ and a constant $\beta\in \mathbb{R}$ such that
	\begin{equation}\label{pre-sturm}
	\forall x\in C_\lambda, \  f(x)+\psi(x)-\psi\circ T(x)=\beta.
	\end{equation}
	 If, furthermore,
	\begin{equation}\label{sturm}
	\forall y\in \T\setminus {C}_\lambda, \  f(y)+\psi(y)-\psi\circ T(y)<\beta,
		\end{equation} then we say that $f$ satisfies the {\em q-Sturmian condition} for $\lambda$.
		%~\marginpar{changed}
\end{definition}

%Notice that $C_\gamma$ contains no singularity of $f_c$.
\iffalse
By the continuity of $f$ on $C_\lambda$, (\ref{pre-sturm}) holds if the equality in (\ref{pre-sturm}) holds for  $x \in C_\lambda'$.
Assume that $f$ satisfies the pre-Sturmian condition for $\lambda$, then\\
\indent (1) \ $\beta = \int f d \frak{S}_\lambda$;\\
\indent (2) \ $\psi$ is unique up to an additive constant and actually
\begin{equation}\label{psi'}
a.e. \ \ \ \psi'(x) = \sum_{n=1}^\infty \frac{f_c'(\tau_\lambda^n x)}{q^n}.
\end{equation}
\fi 
%\marginpar{removed}

\iffalse
As Bousch showed, the key for proving results like those in Theorem \ref{thm:main} is to check that
$f_c$ satisfies the Sturmian condition for some $\gamma$. As we shall see,
a candidate function $\psi$ in the definition of the pre-Sturmian condition is
the Lipschitz function $\psi$ with derivative
$$
      \psi'(x) = \sum_{n=1}^\infty \frac{f_c'(\tau_\gamma^n x)}{q^n}.
$$
\fi

To study the pre-Sturmian condition, let us consider the first time to leave $C_\gamma'$
\begin{equation}\label{e_gamma}
e_\gamma(x):=\inf\{k\ge 0: T^kx\in \T\setminus {C}_\gamma'\} =\sum_{n=0}^\infty \chi_{\tau_\gamma^n(C_\gamma')}(x).
\end{equation}
Let $A_1=C_\gamma'$ and $A_n = C_\gamma' \cap T^{-1}A_{n-1}$ for $n>1$.
Then
$$
       A_n
     %= C_{\gamma} \cap T^{-1}C_\gamma \cap \cdots \cap %T^{-(n-1)}C_\gamma
       = C_{\gamma}' \cap \tau_\gamma(C_\gamma') \cap \cdots \cap \tau_\gamma^{n-1}(C_\gamma')
            =\tau_\gamma^{n-1}(C_\gamma').
$$
From this we verify the second equality in (\ref{e_gamma}). Thus $e_\gamma \in L^1$ and
$$\int_{\mathbb{T}} e_\gamma (x) d x=\sum_{n=1}^\infty q^{-n}=\frac{1}{q-1}.
$$
Since $\tau_\gamma (C_\gamma') \subset C_\gamma'$, the function $e_\gamma$
is supported by $C_\gamma'$.

We have the following criterion for the pre-Sturmian condition, due to Bousch \cite{Bousch2000} (p.503).
%\marginpar{need to check}
\begin{prop}%[Bousch \cite{Bousch2000}, p. 497, p.503]
	\label{prop:bouschsturm}\
	 Let $f: \mathbb{T} \to [-\infty, +\infty)$ be a Borel function bounded from above.
% such that $f$ is Lipschtzian on $C_\lambda$.
\begin{enumerate}
\item If $f$ satisfies the q-Sturmian condition on $C_\lambda$ for some $\lambda\in\R$, then $\frak{S}_\lambda$ is the unique maximizing measure of $f$.
%\item There is at most one $\gamma\in (-q^{-1}-c, -c)$ such that %$f_c$ satisfies the q-Sturmian condition for $\gamma$.
\item $f$ satisfies the pre-q-Sturmian condition for $\lambda$ if and only if $f$ is Lipschitzian on $C_\lambda$ and 
$$v_f(\lambda):=\int_{C_\lambda} f'(x) e_\lambda(x)dx=0.$$
\end{enumerate}
\end{prop}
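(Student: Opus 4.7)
The plan is to treat the two parts separately, with (1) being a short coboundary argument and (2) being a direct computation using iterated inverse branches.

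For part (1), I would set $h := f + \psi - \psi\circ T - \beta$. The q-Sturmian hypothesis says $h = 0$ on $C_\lambda$ and $h < 0$ on $\mathbb{T}\setminus C_\lambda$, while for any $T$-invariant probability $\mu$, invariance kills the coboundary: $\int h\,d\mu = \int f\,d\mu - \beta$. Hence $\int f\,d\mu \le \beta$, with equality if and only if $\mu$ is supported in the set $\{h = 0\}$, which is $C_\lambda$ (up to a null set for the continuous coboundary). Since $\frak{S}_\lambda$ is the unique $T$-invariant probability measure supported in $C_\lambda$ (recalled in Appendix A), this gives both that $\beta = \int f\,d\frak{S}_\lambda$ and that $\frak{S}_\lambda$ is the unique maximizer.

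For part (2), the key identity is obtained by differentiating $f(x)+\psi(x)-\psi(Tx)=\beta$ on $C_\lambda$, namely
\begin{equation}\label{eq:plancob}
\psi'(x) = q\,\psi'(Tx) - f'(x),
\end{equation}
and substituting $x = \tau_\lambda(y)$ to get $\psi'(y) = \frac{1}{q}\bigl(\psi'(\tau_\lambda y) + f'(\tau_\lambda y)\bigr)$. Iterating this and using boundedness of $\psi'$ (from the Lipschitz hypothesis on $\psi$) so that the boundary term $q^{-n}\psi'(\tau_\lambda^n y)$ vanishes in the limit, I obtain
\begin{equation}\label{eq:psiprime}
\psi'(y) \;=\; \sum_{n=1}^\infty \frac{f'(\tau_\lambda^n y)}{q^n}\qquad\text{a.e.\ on }\mathbb{T}.
\end{equation}
For $\psi$ to descend to a function on $\mathbb{T}$, one needs $\int_\mathbb{T}\psi'(y)\,dy=0$. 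Applying the change of variables $y=T^n z$ along the inverse branch $\tau_\lambda^n$ (whose Jacobian is $q^n$) to each term, and noting that the image $\tau_\lambda^n(\mathbb{T})$ equals $A_n=\tau_\lambda^{n-1}(C_\lambda')$, I convert this integral into $\sum_{n\ge 1}\int_{A_n} f'\,dx = \int_{C_\lambda} f'(x)\,e_\lambda(x)\,dx$, which must therefore vanish. This establishes necessity (and incidentally identifies $\psi'$ by formula \eqref{eq:psiprime}).

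For sufficiency, assuming $v_f(\lambda)=0$ and $f$ Lipschitz on $C_\lambda$, I define $\psi'(y)$ by the series \eqref{eq:psiprime}, which converges absolutely and is bounded by $\|f'\|_{L^\infty(C_\lambda)}/(q-1)$. The vanishing of $v_f(\lambda)$ forces $\int_\mathbb{T}\psi'=0$, so $\psi(y):=\int_0^y\psi'(t)\,dt$ extends to a Lipschitz function on $\mathbb{T}$. By the defining recursion of the series I verify \eqref{eq:plancob} pointwise on the interior of $C_\lambda$, hence $(f+\psi-\psi\circ T)'\equiv 0$ there; since $C_\lambda$ is a closed arc and the expression is continuous up to its endpoints, $f+\psi-\psi\circ T$ is constant, which is the pre-Sturmian condition. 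The delicate points I expect to spend most effort on are justifying the termwise change of variables (the integrand is only $L^\infty$, but absolute convergence and Fubini cover it) and handling the possible wrap-around across the endpoint of $C_\lambda$ when checking that the coboundary equation extends continuously to the whole closed arc.
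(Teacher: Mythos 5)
Your proposal is correct and follows essentially the same route as the paper's proof (which the paper itself notes is a rendition of Bousch's argument): a coboundary/invariance argument for (1), and for (2) differentiating the cohomological identity $\psi(y)=f(\tau_\lambda y)+\psi(\tau_\lambda y)-\beta$, iterating, passing to the limit via boundedness of $\psi'$, and converting $\int_{\mathbb{T}}\psi'=0$ through the change of variables $\tau_\lambda^n(\mathbb{T})=A_n$ into $v_f(\lambda)=0$, then reversing for sufficiency. The extra care you flag about the termwise change of variables and the wrap-around at the endpoint of $C_\lambda$ is sound and slightly more explicit than the paper's presentation, but does not change the substance.
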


\begin{proof} These results were stated in \cite{Bousch2000}
	for Lipschitzian $f$. But only the Lipschtzian condition
	on $C_\lambda$ is actually needed. We repeat here the main lines of proofs for the convenience of reading.
	
	(1) It is clear that $\beta$ is attained by the Sturmian measure. On the other hand, any other invariant measure $\mu$ has a support intersecting $\mathbb{T}\setminus C_\lambda$, by the uniqueness of Sturmian measure supported by $C_\lambda$. Then $\int f d \mu <\beta$ by the  Sturmian condition.
	
	(2) Let $\tau=\tau_{\lambda}$. Assume the pre-Sturmain condition which can be restated as
	$$
	    \forall x \in \mathbb{T}\setminus \{q\lambda\},
	    \quad \psi (x) = -\beta + \psi(\tau x) + f(\tau x).
	$$
	By differentiating  and iterating, we get
	$$
	     a.e. \ \    \forall N\ge 1, \quad\psi'(x) =\sum_{n=1}^N \frac{f'(\tau^n x)}{q^n} + \frac{\psi'(\tau^n x)}{q^N}.
	$$
	Since $\psi$ is Lipschitzian, $\psi'$ exists almost everywhere  and $\psi' \in L^\infty(\mathbb{T})$.
	Letting $N\to \infty$, we get the following formula
    \begin{equation}\label{psi'}
    a.e. \ \ \ \psi'(x) = \sum_{n=1}^\infty \frac{f'(\tau_\lambda^n x)}{q^n}.
    \end{equation}
    Then integrate it to obtain
	\begin{equation}\label{periodic}
	     0 = \psi(1) - \psi(0) =\int_0^1 \psi'(x) dx = \int_{C_\lambda} f'(x) e_\lambda(x) dx.
	\end{equation}
	Now assume $\int_{C_\lambda}f'(x) e_\lambda (x) dx =0$ and 
	$f$ is Lipschitzian on $C_\lambda$. Then $f'$ exists almost everywhere on $C_\lambda$  and $f' \in L^\infty(C_\lambda)$.
	Since $f' \in L^\infty(C_\lambda)$,  the series in (\ref{psi'}) defines a bounded function $\psi'$ then
	a Lipschitzian function $\psi$. The computation (\ref{periodic}) shows that $\psi$ is $1$-periodic.
	The formula (\ref{psi'})
	can be rewritten as
	$$
	   a.e. \quad  \psi'(x)  -\frac{1}{q} \Big(  f'(\tau x) + \psi'(\tau x)\Big) =0.
	$$
	In other words, the Lipschitzian function
	$\psi (x) - f(\tau x) - \psi(\tau x)$ is a constant, say $-\beta$.
	\end{proof}

In the case $f=f_c$, we will first prove that the pre-Sturmian condition is satisfied and then prove that the pre-Sturmian condition implies the Sturmian condition.  So, by
Proposition \ref{prop:bouschsturm}, the maximizing measure of $f_c$ is unique and it is a Sturmian measure.
\medskip

%the we are going to show
%It is usually easy to check the pre-Sturmian condition. In our %setting, we have a very precise description as follows.
For any $c \in \R$, we are going to look for $\lambda \in (-q^{-1} -c, -c)$ such that $f_c$ satisfies the pre-Sturmian condition on $C_\lambda$, i.e. $v_c(\lambda)=0$. But $c \mapsto v_c(\lambda)$ can be considered as a $1$-periodic function on $\R$. So,
set  $$\Omega=\{(c,\lambda) \in \R^2: \lambda \in (-q^{-1}-c, -c)\}$$ and $$\Omega_0=\{(c,\lambda)\in \Omega: v_c(\lambda)=0\}.$$
The following lemma shows that the equation $v_c(\lambda) =0$ does have a
real solution $\lambda$ for every real $c$, so that $f_c$ satisfies the pre-Sturmian condition for any $c$. Actually for every fixed $\lambda$, it  will be proved that there exists a unique number $\textbf{c}(\lambda)$
such that $(\textbf{c}(\lambda), \lambda) \in \Omega_0$ and that $\lambda \mapsto \textbf{c}(\lambda)$ is an almost Lipschitzian homeomorphism from $\R$ onto $\R$.

\begin{lemma}\label{lem:vcgamma} There is a homeomorphism $\textbf{c}: \R\to \R$ such that
\begin{equation}\label{eqn:graphq}
\Omega_0=\{(\textbf{c}(\lambda), \lambda):\lambda\in \R\}.
\end{equation}
Moreover, 
\begin{enumerate}
\item the function $\textbf{c}(\lambda)$ has modulus of continuity $O(|x\log x|)$.
\item there exists $\eps>0$ such that $$-\frac{1}{q}+\eps\le \textbf{c}(\lambda)+\lambda\le -\eps.$$
\end{enumerate}
\end{lemma}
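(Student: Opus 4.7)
The strategy is to treat $v_c(\lambda)=0$ as an implicit equation. Fix $\lambda$ and first establish strict monotonicity of $c\mapsto v_c(\lambda)$ together with its boundary behavior; this pins down a unique solution $\textbf{c}(\lambda)$ away from the edges of $\Omega$ and yields property (2). Continuity and the homeomorphism property then follow from joint continuity of $v$, while property (1) will require a quantitative refinement.

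For strict monotonicity, differentiate under the integral using $f_c(x)=f_0(x+c)$:
$$\partial_c v_c(\lambda)=\int_{C_\lambda} f_0''(x+c)\,e_\lambda(x)\,dx<0,$$
since $f_0''<0$ on any interval free of its singularities (as observed at the start of Section \ref{sec:max}) and $e_\lambda\ge\chi_{C_\lambda'}$ is positive on a set of positive measure. For the boundary behavior, as $c\to -\lambda^-$ the negative log-singularity of $f_c$ at $-c+1/q$ approaches the right endpoint of $C_\lambda$; since $f_c'(x)\sim 1/(x+c-1/q)$ and $e_\lambda\ge 1$ on $C_\lambda'$, the contribution of a neighborhood of $\lambda+1/q$ to $v_c(\lambda)$ is of order $\log|c+\lambda|\to -\infty$, and a mirror argument at $c\to -\lambda-1/q^+$ gives $+\infty$. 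The universal shape of $f_0$ together with $\|e_\lambda\|_{L^1}=1/(q-1)$ (independent of $\lambda$) makes these estimates uniform in $\lambda$, so there exists $\eps>0$ with $v_c(\lambda)<0$ for $c+\lambda\in[-\eps,0)$ and $v_c(\lambda)>0$ for $c+\lambda\in(-1/q,-1/q+\eps]$. Combined with monotonicity and the intermediate value theorem, this produces a unique $\textbf{c}(\lambda)\in(-\lambda-1/q+\eps,-\lambda-\eps)$, establishing property (2).

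For continuity and the homeomorphism property, we verify joint continuity of $(c,\lambda)\mapsto v_c(\lambda)$ on $\Omega$. Continuity in $\lambda$ uses the representation $e_\lambda=\sum_{n\ge 0}\chi_{\tau_\lambda^n(C_\lambda')}$, where each arc has length $q^{-n-1}$ with continuously varying endpoints, together with local boundedness of $f_c'$ on $C_\lambda$ (valid thanks to property (2)). Continuity of the root $\textbf{c}(\lambda)$ then follows from strict monotonicity in $c$. Since $\textbf{c}(\lambda)+\lambda$ is bounded in a sub-interval of $(-1/q,0)$, we have $\textbf{c}(\lambda)\to\mp\infty$ as $\lambda\to\pm\infty$, so $\textbf{c}$ is a continuous surjection $\R\to\R$. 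For such a map, being a homeomorphism is equivalent to strict monotonicity, which we obtain by a standard argument: if $\textbf{c}(\lambda)=\textbf{c}(\lambda')=c_0$ for $\lambda\ne\lambda'$, then the two pre-Sturmian cohomological identities at $(c_0,\lambda)$ and $(c_0,\lambda')$, combined with strict concavity of $f_{c_0}$ on singularity-free arcs, yield (by a convex-combination/exchange argument) a $T$-invariant measure with $f_{c_0}$-integral strictly exceeding the shared pre-Sturmian value, contradicting maximality.

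The main obstacle is the modulus-of-continuity estimate (1). We refine the continuity argument quantitatively: the endpoints of $\tau_\lambda^n(C_\lambda')$ move at speed $q^{-n}$ in $\lambda$, so the symmetric difference $\tau_\lambda^n(C_\lambda')\triangle\tau_{\lambda'}^n(C_{\lambda'}')$ has Lebesgue measure $O(\min(q^{-n},|\lambda-\lambda'|))$. Integrating $|f_c'|$ (uniformly bounded on $C_\lambda$ by property (2)) against these differences and truncating the geometric sum at $n\approx\log_q(1/|\lambda-\lambda'|)$ produces $|v_c(\lambda)-v_c(\lambda')|=O(|\lambda-\lambda'|\log(1/|\lambda-\lambda'|))$. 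Combined with the uniform positive lower bound on $|\partial_c v_c|$ on the slab $c+\lambda\in[-1/q+\eps,-\eps]$ (again from property (2)), the root-continuity argument transfers the modulus $O(|x\log x|)$ to $\textbf{c}(\lambda)$.
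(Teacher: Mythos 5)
Your proposal follows essentially the same route as the paper: strict monotonicity of $c\mapsto v_c(\lambda)$ via $\partial_c v_c(\lambda)=\int_{C_\lambda} f_0''(x+c)\,e_\lambda(x)\,dx<0$, sign/boundary behavior of $v$ together with the intermediate value theorem for existence, the $O(|x\log x|)$ $L^1$-modulus of $\lambda\mapsto e_\lambda$, and transfer through the lower bound on $|\partial_c v_c|$. One small improvement is that you derive the $O(|x\log x|)$ modulus of $e_\lambda$ directly from the geometry of the towers $\tau_\lambda^n(C_\lambda')$, whereas the paper simply cites Bousch. Your ordering is also slightly cleaner: you obtain the uniform confinement $\textbf{c}(\lambda)+\lambda\in[-1/q+\eps,-\eps]$ from boundary estimates \emph{before} addressing continuity, which then makes $\|f_c'\|_{L^\infty(C_\lambda)}$ uniformly bounded when you need it for the modulus; the paper instead derives (2) from $1$-periodicity and compactness of $\textbf{c}(\lambda)+\lambda$, which presupposes continuity, yet that continuity argument already seems to want the bound on $f_c'$. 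Also note that the uniform bound $\partial_c v_c\le -K/(q-1)$ holds over all of $\Omega$, not just the $\eps$-slab, since $f_0''\le -K<0$ on all of $(-q^{-1},q^{-1})$; you don't strictly need (2) to get it.

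The one place where you deviate, and where care is needed, is injectivity of $\textbf{c}$. The paper asserts bijectivity after the two IVT existence statements without visibly proving that for fixed $c$ the equation $v_c(\lambda)=0$ has a unique solution, so you are right to try to close this. However, your ``convex-combination/exchange argument, contradicting maximality'' cannot run where you place it: maximality of $\frak{S}_\lambda$ for $f_{c_0}$ follows from the pre-$q$-Sturmian condition only through Proposition~\ref{lem:Jenkinsonlemma} (pre-Sturmian $\Rightarrow$ Sturmian), which is proved \emph{after} this lemma, so as written the argument has a forward reference that must at least be acknowledged and checked for circularity (it happens to be safe here, since the proof of Proposition~\ref{lem:Jenkinsonlemma} uses Lemma~\ref{lem:almostcentered} rather than Lemma~\ref{lem:vcgamma}, but this should be said). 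More substantively, even granting Proposition~\ref{lem:Jenkinsonlemma}, equality of the maximizing measures $\frak{S}_\lambda=\frak{S}_{\lambda'}$ is not itself a contradiction when that measure is periodic (an entire interval of $\lambda$'s supports the same periodic Sturmian measure), so the ``exchange argument'' has to produce something more — e.g.\ that the two calibrating functions $\psi,\psi'$ differ by a constant, so that the calibrated potential $F=f_{c_0}+\psi-\psi\circ T$ equals the constant $\beta$ on the strictly larger arc $C_\lambda\cup C_{\lambda'}$, contradicting the \emph{strict} inequality in the Sturmian condition outside $C_\lambda$. Without these details the injectivity step remains a gap, though admittedly one the published text shares.
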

\begin{proof}
	%Recall that
	%$f_c(x) =  \frac{1}{2} \log \cos^2 \pi (x +c)$ and
	%$$
	%f_c'(x) =  - \pi \tan \pi (x+c), \quad
	%f''_c(x) = - \frac{ \pi^2}{\cos^2 \pi (x+c)} \ \ \  (x \not= \frac{1}{2} -c).
For each fixed $\lambda \in \R$, the function $c\mapsto v_c(\lambda)$ is clearly smooth on $(-q^{-1}-\lambda, -\lambda)$ and
$$\frac{\partial v_c(\lambda)}{\partial c}
= \int_{\mathbb{T}} f''_c(x) e_\lambda(x) dx
=\sum_{n\ge 0} \int_{\tau_\lambda^n(C_\lambda)} f''_c(x) dx\le -\frac{K}{q-1}<0,$$
where $-K=\max \{f_0'(x): x\in (-q^{-1}, q^{-1})\}<0$.
Thus for each $\lambda$, there is at most one $c$ with $(c,\lambda)\in \Omega_0$.
%by Proposition \ref{prop:bouschsturm} (3).
%~\marginpar{deleted}
On the other hand,
observe that for each $c\in\R$, $f_c'(x)>0$ for all $x\in (-q^{-1}-c, -c)$ and $f_c'(x)<0$ for all $x\in (-c, q^{-1}-c)$.
See Figure \ref{fig:f'} for the graph of $f_0'$ and the graph of 
$f_c'$ is nothing but a translation of that of $f_0'$. 
 \begin{figure}[htb]
	\centering
	\includegraphics[width=0.80\linewidth]{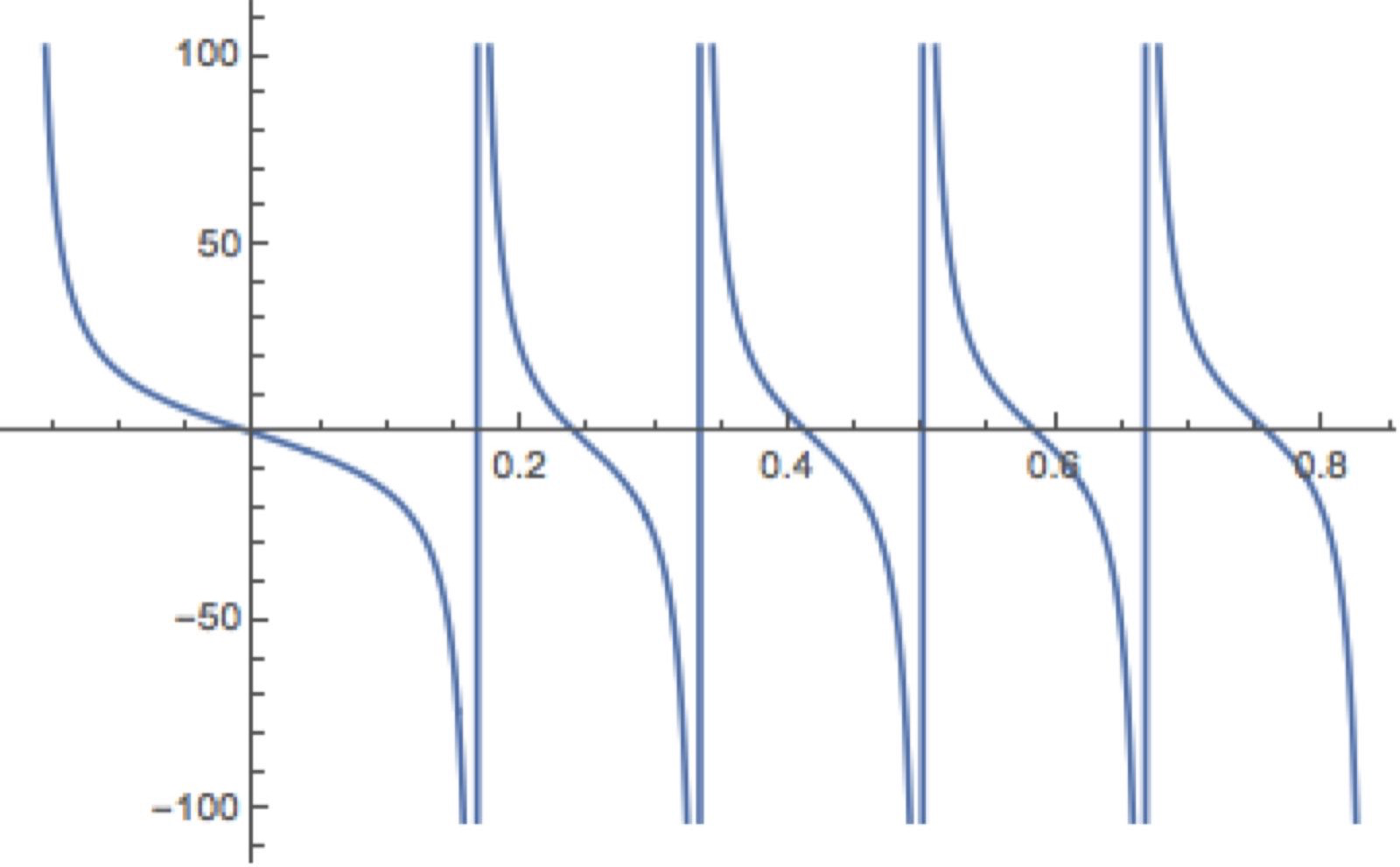}
	\caption{The graphs of $f'_0$ on the interval % $ [0,1] $ and
		$ [-1/q,1-1/q] $, here $q=6$.}
	\label{fig:f'}
\end{figure}
 
As $\lambda\searrow -q^{-1}-c+0$, $C_\lambda$ tends to $[-q^{-1}-c, -c]$. Since $e_\lambda$ is supported in $C_\lambda$, this implies that
\begin{equation}\label{eqn:lim1}
\lim_{\lambda\searrow -q^{-1}-c} v_c(\lambda)>0.
\end{equation}
Similarly we show that
 \begin{equation}\label{eqn:lim2}
 \lim_{\lambda\nearrow q^{-1}-c} v_c(\lambda)<0.
\end{equation}
% or $c\to -\gamma-0$, the segment $[-q^{-1}-c, -c]$ tends to $C_\gamma$.
%As $c\to 1-\gamma-0$, the semi-circle $[1-c, 3/2-c]$ tends to %$C_\gamma$ too.
%Also observe that
%Since $e_\gamma$ is supported by $C_\gamma$,
%it follows that
%$\begin{equation}\label{2limits}
%	\lim_{c\to 1/2-\gamma+0} v_c(\gamma)\in (0,\infty]\mbox{ and } \lim_{c\to 1-\gamma-0} v_c(\gamma)\in [-\infty, 0).
%\end{equation}
By the Intermediate Value Theorem, for each $c$, there is one $\lambda\in\R$ with $(c,\lambda)\in \Omega_0$.
A similar argument by the Intermediate Value Theorem shows that for each $\lambda$, there is $c\in\R$ with $(c,\lambda)\in\Omega_0$. It follows that there is a bijective function $\textbf{c}:\R\to \R$ such that (\ref{eqn:graphq}) holds.
%thus $q$ is surjective.
By \cite{Bousch2000} (p. 505),
$\lambda\mapsto e_{\lambda}$, as a function from $\R$ to $L^1(\R/\Z)$, has modulus of continuity $O(|x\log x|)$. This, together with the uniform upper bounds on $\frac{\partial v_c}{\partial c}$,  implies that $\textbf{c}$ is continuous with modulus of continuity $O(|x\log x|)$. In particular, $\textbf{c}:\R\to\R$ is a homeomorphism.

Finally the statement (2) holds because $\lambda\mapsto \textbf{c}(\lambda)+\lambda$ is of period $1$ and it takes values in $(-1/q, 0)$. 
\end{proof}
%\marginpar{changed $q$ to $\textbf{c}$}
\subsection{Pre-Sturmian condition implies Sturmian condition for $f_c$}
Bousch mentioned that in the case $q=2$, the pre-$q$-Sturmian condition, in practice, often implies the stronger $q$-Sturmian condition. Jenkinson noticed that it is {\em always} the case for continuous maps $f:\T\to \R$ which is strictly concave on $(0,1)$. We shall develop further Jenkinson's argument to show the following:
%see that it is still the case for any function $f$ which has a singularity $b$ and is concave in the open interval $(b, b+1)$.

\begin{prop}\label{lem:Jenkinsonlemma} If $f_c$ satisfies the pre-q-Sturmian condition for some $\lambda\in (-q^{-1}-c, -c)$, then
$f_c$ satisfies the Sturmian condition for $\lambda$.
\end{prop}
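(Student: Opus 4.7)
The plan is to set $\Phi(y):=f_c(y)+\psi(y)-\psi(Ty)-\beta$, so that the pre-$q$-Sturmian hypothesis yields $\Phi\equiv 0$ on $C_\lambda$, and the task becomes to upgrade this to $\Phi(y)<0$ for every $y\in\T\setminus C_\lambda$. The essential input is the strict concavity $f_c''<0$ of $f_c$ on the long arc $(b_{q-1},b_1)$ of length $2/q$ containing $C_\lambda$ in its interior, this containment being guaranteed with a uniform margin by Lemma~\ref{lem:vcgamma}(2). I intend to follow the strategy of Jenkinson~\cite{J2008}.

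First, I would reduce to a fibrewise comparison. Every $y\in\T\setminus C_\lambda$ can be written uniquely as $y=x+j/q$ with $x:=\tau_\lambda(Ty)\in C_\lambda'$ and $j\in\{1,\dots,q-1\}$, and since $Ty=Tx$ while $\Phi(x)=0$,
$$\Phi(y) \;=\; G_j(x) \;:=\; H(x+j/q) - H(x),\qquad H(t):=f_c(t)+\psi(t).$$
So it suffices to prove $G_j<0$ on $C_\lambda'$ for each $j\in\{1,\dots,q-1\}$, i.e.\ that $H$ attains its fibrewise maximum strictly at the unique preimage lying in $C_\lambda'$. Two basic features are free: as $x\to -c\in\mathrm{int}(C_\lambda')$, the point $x+j/q$ tends to the singularity $b_j$ of $f_c$, so $G_j(x)\to-\infty$; moreover $G_1(\lambda)=0$ and $\lim_{x\to(\lambda+1/q)^-}G_{q-1}(x)=0$, since those particular boundary points of $C_\lambda+j/q$ lie in $\overline{C_\lambda}$ where $\Phi$ vanishes.

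The key analytical input is the derivative identity $H'(x)=q\psi'(Tx)$ on $C_\lambda$, obtained by differentiating the pre-Sturmian relation, combined with the universal series (\ref{psi'}) for $\psi'$, which holds on all of $\T$ (validly, because the iterates $\tau_\lambda^n t$ remain in $C_\lambda'$, where $f_c$ is analytic by Lemma~\ref{lem:vcgamma}(2)). Using these, $G_j'(x)$ can be written as a convergent series of $f_c'$-values along the iterated $\tau_\lambda$-preimages of $x$ and of $x+j/q$; the strict monotonicity of $f_c'$ coming from $f_c''<0$ then forces a sign pattern that rules out any interior critical point of $G_j$ at which $G_j\geq 0$. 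For $j\in\{1,q-1\}$ this, together with the boundary values above and the blow-up at $-c$, immediately yields $G_j<0$ on $C_\lambda'\setminus\{-c\}$. For intermediate $j\in\{2,\dots,q-2\}$ (where no boundary value of $G_j$ is automatically $0$) the argument is the same in spirit but subtler; I would argue that the fibrewise maximum of $H$ is still attained strictly at the $C_\lambda'$-point via a global variational/concavity argument, using that $H$ is strictly concave on each smoothness interval of $f_c$.

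The principal obstacle is this last sign analysis: the two $\tau_\lambda$-preimage sequences $(\tau_\lambda^n x)$ and $(\tau_\lambda^n(x+j/q))$ both lie in $C_\lambda'$ but need not shadow each other closely, so extracting the desired sign from their difference requires a careful pairing and a quantitative use of strict concavity. A slicker alternative route would proceed by contradiction: an interior zero of $G_j$ would yield a $T$-invariant probability measure $\mu\neq\mathfrak{S}_\lambda$ also maximizing $\int f_c\,d\mu$ and concentrated outside $C_\lambda$, which would ultimately conflict with the bijectivity of the homeomorphism $\textbf{c}:\R\to\R$ established in Lemma~\ref{lem:vcgamma} (two distinct values of $\lambda$ could not both satisfy the pre-$q$-Sturmian condition for the same $c$).
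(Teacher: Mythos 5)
Your initial reduction is sound and is in fact the same starting point the authors use: writing $\Phi(y)=f_c(y)+\psi(y)-\psi(Ty)-\beta$ and comparing $y$ with its $T$-equivalent point $x=\tau_\lambda(Ty)\in C_\lambda'$ (so $\Phi(y)=G_j(x)$) is precisely Jenkinson's device, and for $q=2$ one can carry it through by writing $G_1'$ as the telescoped series $\sum_{n\ge0}q^{-n}\bigl(f_c'(\tau_\lambda^n x)-f_c'(\tau_\lambda^n(x+1/q))\bigr)$ and integrating by parts against $f_c''<0$. But there are two genuine gaps after this. First, your claim that $H=f_c+\psi$ is strictly concave on each smoothness interval of $f_c$ is false: $\psi'(t)=\sum_{n\ge1}q^{-n}f_c'(\tau_\lambda^n t)$ has an \emph{upward} jump at each point where some $\tau_\lambda^n t$ crosses the discontinuity of $\tau_\lambda$ (because $f_c'$ is decreasing on $C_\lambda$ and $\tau_\lambda$ jumps down from $\lambda+1/q$ to $\lambda$), so $\psi$, and hence $H$, fails to be concave. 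The concavity Jenkinson exploits is that of $f_c$ alone, inside an integration-by-parts argument — not concavity of $H$. Consequently your proposed variational/concavity argument for the intermediate fibers $j\in\{2,\dots,q-2\}$ does not go through; and even the boundary fibers $j\in\{1,q-1\}$ do not follow ``immediately'' from the endpoint values and the blow-up at $-c$ — one needs the sign-pattern/kernel argument in full.

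The paper bypasses the intermediate-$j$ difficulty by a different decomposition: it compares $F(x)$ against the $T$-equivalent point $x_0$ lying in the \emph{concavity} arc $M=(-c-q^{-1},-c+q^{-1})$ rather than in $C_\lambda'$, splits into three cases depending on where $x_0$ falls, and pays for this with two technical numerical lemmas (the localization $\theta\in(3/(8q),5/(8q))$ deduced from the pre-Sturmian identity, then Lemmas on $H(t,s)<0$ and $G(t,s)<0$) that have no analogue in a purely soft concavity argument. Your fallback ``slicker route'' via bijectivity of $\textbf{c}$ is circular: an interior zero of $G_j$ does not directly produce a second $\lambda$ with $v_c(\lambda)=0$, and in any case the uniqueness of $\lambda$ for fixed $c$ is itself a consequence of the Sturmian condition (via uniqueness of the maximizing measure), not an independent input. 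So while your reduction is the right beginning, the proof as sketched does not close, and the missing step is exactly the hard part.
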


The proof of this proposition is complicated and will be postponed to the next section.

\subsection{Proof of Theorem~\ref{thm:main}}
By Lemma~\ref{lem:vcgamma} above, $f_c$ satisfies the pre-$q$-Sturmian condition for some $\lambda\in [-q^{-1}-c+\eps, -c-\eps]$. By Proposition~\ref{lem:Jenkinsonlemma}, $f_c$ satisfies the $q$-Sturmian condition for this $\lambda$. Thus there is a Lipschitz function $\psi: \T\to \R$ and a constant $\beta$ such that
$$F(x):=f_c(x)+\psi(x)-\psi(T(x))=\beta, \forall x\in C_\gamma,$$
and $$F(x)<\beta, \forall x\in \T\setminus C_\gamma.$$
Moreover, by (\ref{psi'}), there exists $C$ depending only on $\eps$ such that $\|\psi'\|_\infty\le C.$
By Proposition~\ref{prop:bouschsturm}, the Sturmian measure $\frak{S}_\gamma$ is the unique maximizing measure of $f_c$, $\beta=\beta(c).$ Clearly, for all $x\in \T$,
\begin{multline*}
S_n f(x)-n\beta(c)= S_n F(x) -\psi(x)+\psi(T^n(x))-n\beta\\
\le \psi(T^n(x))-\psi(x)\le \|\psi'\|_\infty\le C.
\end{multline*}
%is bounded from above by a constant.

\subsection{$\nu_c$ is periodic for almost all $c$} %~\marginpar{changed}
Recall that $\nu_c$ denotes the maximizing measure of $f_c$
(see Theorem \ref{thm:main}).
Let $$\mathcal{P}=\{c\in \R: \nu_c \ \mbox{ is NOT supported on a periodic orbit}\}\\
.$$
\begin{theorem}\label{thm:typicalper} The set $\overline{\mathcal{P}}$ is nowhere dense and has Hausdorff dimension zero.
%The following assertions hold:\\
%	%\indent (1)\ The Hausdorff~\marginpar{changed} dimension of $\Gamma\cap [0,1)$ is zero;\\ %~\marginpar{changed}
%	\indent (1)\ The Hausdorff~\marginpar{changed} dimension of $\mathbb{R} \setminus \mathcal{P}$ is zero;\\
%	\indent (2)\ The set $\mathcal{P}$ contains an open and dense subset of $\R$.  %~\marginpar{changed}
\end{theorem}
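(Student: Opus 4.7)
The plan is to transport the problem from $c$-space to $\lambda$-space via the homeomorphism $\textbf{c}$ of Lemma~\ref{lem:vcgamma}, and then invoke the classical structure theory of $q$-Sturmian measures. By Theorem~\ref{thm:main}, the maximizing measure is $\nu_c=\mathfrak{S}_{\lambda(c)}$ with $\lambda(c):=\textbf{c}^{-1}(c)$, so
$$\mathcal{P}=\textbf{c}(\mathcal{Q})\quad\text{where}\quad \mathcal{Q}:=\{\lambda\in\R:\mathfrak{S}_\lambda\text{ is not supported on a periodic orbit}\}.$$

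First I would recall the devil's-staircase structure of $q$-Sturmian measures, due to Veerman~\cite{Veerman1989} and, for $q=2$, to Bousch--Sorrentino~\cite{BS1994} and Bousch~\cite{Bousch2000}: there is a continuous non-decreasing ``rotation number'' $\rho:\R\to\R$ satisfying $\rho(\lambda+1)=\rho(\lambda)+(q-1)$ such that $\mathfrak{S}_\lambda$ is supported on a periodic orbit if and only if $\rho(\lambda)\in\Q$, and for every rational $r$ in the range of $\rho$ the level set $\rho^{-1}(r)$ is a non-degenerate closed interval (a ``plateau''). Moreover, the Cantor-like exceptional set $E:=\{\lambda:\rho(\lambda)\notin\Q\}$ has upper Minkowski (hence Hausdorff) dimension zero. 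Then $\mathcal{Q}=E$; since the plateau interiors are open and dense in $\R$, the closure $\overline{E}$ has empty interior; and since $\overline{E}\setminus E$ is contained in the countable set of plateau endpoints, $\HD(\overline{E})=\HD(E)=0$. Consequently $\overline{\mathcal{Q}}=\overline{E}$ is closed, nowhere dense, and of Hausdorff dimension zero.

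It remains to push these properties forward through $\textbf{c}$. Since $\textbf{c}:\R\to\R$ is a homeomorphism (Lemma~\ref{lem:vcgamma}), $\overline{\mathcal{P}}=\textbf{c}(\overline{\mathcal{Q}})=\textbf{c}(\overline{E})$ is closed and nowhere dense. For the Hausdorff dimension, the modulus of continuity $\omega(t)=O(t|\log t|)$ of $\textbf{c}$ given in Lemma~\ref{lem:vcgamma} satisfies, for any fixed $\eta>0$, $\omega(t)\le t^{1-\eta}$ once $t$ is small enough; since $\textbf{c}$ is monotone, $|\textbf{c}(I)|\le\omega(|I|)$ for every short interval $I$. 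A routine covering argument then gives: if $A\subseteq\R$ has Hausdorff dimension zero, so does $\textbf{c}(A)$. Indeed, given $s'>0$, pick $\eta>0$ with $s:=s'(1-\eta)<s'$; any sufficiently fine cover $\{I_i\}$ of $A$ with $\sum|I_i|^s$ small yields a cover $\{\textbf{c}(I_i)\}$ of $\textbf{c}(A)$ with
$$\sum_i|\textbf{c}(I_i)|^{s'}\le\sum_i|I_i|^{s'(1-\eta)}=\sum_i|I_i|^{s},$$
which can therefore be made arbitrarily small. Applying this with $A=\overline{E}$ finishes the proof.

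The main obstacle I expect is not logical but expository: quoting the Veerman--Bousch devil's-staircase theorem in exactly the form used above (continuity of $\rho$, non-degeneracy of rational plateaux, and zero-dimensionality of the irrational-rotation set), rather than reproving it. The only small point that deserves care is the identity $\mathcal{Q}=E$, i.e.\ the assertion that plateau endpoints also give periodic maximizing measures; this follows from the upper semi-continuity of $\mu\mapsto\int f_c\,d\mu$ in Theorem~\ref{DefBeta}(1) together with the uniqueness statement in Theorem~\ref{thm:main}.
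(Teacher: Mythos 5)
Your proof is correct and follows essentially the same route as the paper: pull back to $\lambda$-space via the homeomorphism $\textbf{c}$, use the devil's-staircase structure (from Veerman/Bousch) to get that the non-periodic parameter set $\Gamma$ has Hausdorff dimension zero and that $\overline{\Gamma}\setminus\Gamma$ is countable (plateau endpoints), and push forward using the $O(t|\log t|)$ modulus of continuity of $\textbf{c}$. The only cosmetic difference is that you obtain nowhere-density from density of plateau interiors, whereas the paper deduces it directly from $\dim\overline{\mathcal{P}}=0$.
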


\begin{proof} Let $\textbf{c}$ be the function as in Lemma~\ref{lem:vcgamma} and let 
$$\Gamma=\{\lambda\in \R: \frak{S}_\lambda \text { is NOT supprted on a perioic orbit}\}.$$
Then $\mathcal{P}=\textbf{c}(\Gamma)$. 
Since $\textbf{c}$ is a homeomorphism, $\overline{\mathcal{P}}=\textbf{c}(\overline{\Gamma})$. 
By Proposition~\ref{prop:qStur}, $\overline{\Gamma}$ has Hausdorff dimension zero. %
% $$\mathcal{P}':=\textbf{c}^{-1}(\mathcal{P})=$$
% 
%For each $\lambda\in \R$, let $R_\lambda:\T\to\T$ denote the continuous map such that $R_\lambda|C_\lambda= T|C_\lambda$ and such that $R_\lambda$ %is constant on $\T\setminus C_\lambda$. This is a continuous monotone map from $\T$ to itself and has a well-defined rotation number %$\rho(\lambda)\in \T$. The map $\lambda\mapsto \rho(\lambda)$ is continuous and monotone. Let
%$$\Gamma=\{\lambda\in \R: \rho(\lambda)\not\in\Q/\Z\}.$$ By \cite{Veerman1989}, $\dim(\Gamma)=0$.
%It is well-known that $\Gamma=\R\setminus \mathcal{P}'$. 
Since $\textbf{c}$ is $\alpha$-H\"older for each $\alpha\in (0,1)$, it follows that 
$$\dim(\overline{\mathcal{P}})=\dim (\textbf{c}(\Gamma))=0,$$
which also implies that $\mathcal{P}$ is nowhere dense. 
%
%
%It is clear that
% $\mathcal{P}'\setminus \text{int}\mathcal{P}'$ is a countable set since for any $\lambda$ in this set, $\lambda\mod 1$ is a periodic point of $T$. %Thus $\R\setminus \text{int}(\mathcal{P}')$ also has Hausdorff dimension zero. In particular, $\text{int}(\mathcal{P}')$ is an open dense subset of
% $\R$. Since $\textbf{c}$ is a homeomorphism, the statement (2) follows. %$$\overline{\Gamma
\end{proof}

\begin{remark} We learned from Bousch (personal communication) that  any bounded subset of $\Gamma$ has upper Minkowski dimension $0$, and hence so does any bounded subset of $\mathcal{P}$.
\end{remark}

\section{Pre-Sturmian condition implies Sturmian condition}
\label{sect:PStoS}
\iffalse
Given an integer $q\ge 2$, write
$$f(x)=\log\left|\frac{\sin (\pi qx)}{q\sin (\pi x)}\right|$$
and for each $c\in \R$, $f_c(x)= f(x+c)$.
As a function defined on $\T$, the function $f_c$ has $q-1$ singularities $j/q -c$, $1\le j<q$ and outside these singularities,
$f_c$ is concave. In fact,
$$f'(x)=\pi \left(q\cot \pi qx -\cot \pi x\right)$$ and
$$f''(x)=\pi^2 \left(\frac{1}{\sin^2 \pi x} -\frac{q^2}{\sin^2 \pi qx}\right)<0.$$
Let $T=T_q:\T\to\T$ denote the map $x\mapsto qx \mod 1$.
\fi
The goal of this section is to prove Proposition~\ref{lem:Jenkinsonlemma} which we restate as

\begin{theorem}\label{Pre-S} Assume that $f_c$ satisfies the pre-Sturmian condition on $C_\lambda=[\lambda, \lambda+1/q]$ for some $\lambda\in (-1/q-c, -c)$. Then $f_c$ satisfies the Sturmian condition on $C_\lambda$.
\end{theorem}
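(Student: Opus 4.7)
The plan is to extend Jenkinson's $q=2$ proof (the commented-out argument above) to general $q\geq 2$. Setting $F:=f_c+\psi-\psi\circ T$, the pre-Sturmian hypothesis gives $F\equiv \beta$ on $\overline{C_\lambda}$; moreover $F$ is continuous on $\T$ away from the $q-1$ singularities of $f_c$, where it tends to $-\infty$. The task is to show $F(y)<\beta$ at every non-singular $y\in\T\setminus\overline{C_\lambda}$. I would first reduce to a comparison problem: let $x:=\tau_\lambda(Ty)\in C_\lambda$ be the unique sibling of $y$; since $Tx=Ty$, one has $F(y)-F(x)=g(y)-g(x)$ with $g:=f_c+\psi$, so it suffices to prove $g(y)<g(x)$.

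Differentiating the pre-Sturmian identity on $C_\lambda$ gives $f_c'(x)+\psi'(x)=q\psi'(Tx)$ for $x\in C_\lambda$, which, combined with the series $\psi'(u)=\sum_{n\geq 1}q^{-n}f_c'(\tau_\lambda^n u)$, yields (with the convention $\tau_\lambda^0:=\mathrm{id}$)
\[ F'(y)=\sum_{n\geq 0}\frac{f_c'(\tau_\lambda^n y)-f_c'(\tau_\lambda^n x)}{q^n}. \]
A further useful fact, obtained by differentiating the same series once more: $\psi''(u)=\sum_{n\geq 1}q^{-2n}f_c''(\tau_\lambda^n u)<0$, since each $\tau_\lambda^n u\in C_\lambda\subset J_0$, where $f_c''<0$. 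Hence $\psi$ is strictly concave on $\T$ and $g=f_c+\psi$ is strictly concave on each concavity interval $J_k$ of $f_c$.

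When $y$ lies in the main concavity interval $J_0$ (the one containing $-c$ and $C_\lambda$), $y$ and the closer endpoint $\gamma$ of $C_\lambda$ lie in $J_0$, so the path $[y,\gamma]$ avoids singularities. Integrating $F'$ along it, changing variables on each summand via $\tau_\lambda^n$ (Jacobian $q^{-n}$), and integrating by parts yields
\[ \int_y^\gamma F'(s)\,ds=\int_\T f_c'(u)B_y(u)\,du=-\int_\T A_y(u)f_c''(u)\,du, \]
where $B_y:=\sum_{n\geq 0}(\chi_{\tau_\lambda^n E}-\chi_{\tau_\lambda^n E'})$, $E:=(y,\gamma)$, $E'$ is the sibling of $E$ inside $C_\lambda$, and $A_y$ is a primitive of $B_y$. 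A case analysis on the pairwise disjoint chains $\tau_\lambda^n E\subset C_\lambda$ ($n\geq 1$) shows $A_y\geq 0$ with $A_y\not\equiv 0$; together with $f_c''<0$ on each concavity interval this delivers the strict inequality $F(y)<\beta$.

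The genuinely new situation for $q>2$ is $y\in J_k$ with $k\in\{1,\dots,q-2\}$, a concavity interval separated from $C_\lambda$ by singularities. My plan is to use the interior point $p_k:=\lambda+(k+1)/q\in J_k$: the identity $Tp_k=T\lambda$ shows the sibling of $p_k$ is $\lambda\in\overline{C_\lambda}$; $F$ is continuous at $p_k$ (no singularity there); and the two sub-intervals of $J_k$ on either side of $p_k$ sit in distinct sibling intervals of $C_\lambda$. On each sub-interval I would integrate $F'$ from $y$ to $p_k$ within the single concavity interval $J_k$ and re-run the change-of-variable/IBP scheme above; this reduces the problem to the boundary-case inequality $F(p_k)\leq\beta$, equivalently $g(p_k)\leq g(\lambda)$. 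The main obstacle I anticipate is verifying this boundary-case inequality together with the positivity $A_y\geq 0$ in the multi-interval setting, where the preimage chains $\tau_\lambda^n E$ and $\tau_\lambda^n E'$ interact less straightforwardly inside $C_\lambda$ than in the $q=2$ case; I expect to close the argument by iterating the comparison across $k$, invoking the global strict concavity of $\psi$ established above and the endpoint identity $g(\lambda)=g(\lambda+1/q)$, which follows from $T\lambda=T(\lambda+1/q)$ and the pre-Sturmian cocycle.
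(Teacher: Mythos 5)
Your strategy — generalize Jenkinson's concavity/integration-by-parts scheme from $q=2$ to arbitrary $q$ — is a genuinely different route from the paper's, and it stops short of a proof. For $y$ in the main concavity interval $J_0=(-c-q^{-1},-c+q^{-1})$ containing $C_\lambda$, your reduction to the sibling $x=\tau_\lambda(Ty)\in C_\lambda$, the derivative formula for $F'$, the change of variables producing $B_y$ and $A_y$, and the IBP $\int B_y f_c'=-\int A_y f_c''$ all go through as in the commented $q=2$ proof (and as in the paper's Case~I, which it attributes to Jenkinson). Two claims, however, do not hold up. First, a minor one: $\psi$ cannot be ``strictly concave on $\T$'' — a periodic $C^1$ function with strictly decreasing derivative does not exist on a circle; what you actually have is concavity of $\psi$ on $\T$ minus the unique jump point $T\lambda$ of $\psi'$, which is weaker and less useful once $f_c$ contributes its own singularities to $g=f_c+\psi$.

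The real gap is in the $J_k$, $k\in\{1,\dots,q-2\}$, case. With $E=(y,p_k)\subset J_k$ and $E'=\tau_\lambda(TE)\subset C_\lambda$, one has $|E'|=|E|$ so the total mass $\int_\T B_y=0$, but the mass is distributed as $\int_{J_k}B_y=|E|$ and $\int_{J_0}B_y=-|E|$, since all pullbacks $\tau^n E$ ($n\geq 1$) and $\tau^n E'$ ($n\geq 0$) sit inside $C_\lambda\subset J_0$ while $E$ itself is in $J_k$. Consequently the primitive $A_y$ cannot vanish at the singularities of $f_c$ that separate $J_0$ from $J_k$ (it equals $-|E|\neq 0$ there), and since $f_c''\sim -1/(x-b_j)^2$ at those singularities, $A_y f_c''$ is not integrable and the boundary term $A_y f_c'$ is unbounded: the IBP identity $\int B_y f_c'=-\int A_y f_c''$ simply does not hold. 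So the sign argument collapses. Furthermore, even if one could compare $F(y)$ with $F(p_k)$ by other means, the inequality $F(p_k)\leq\beta$, i.e.\ $g(p_k)\leq g(\lambda)$, is itself not a formal consequence of concavity; it is a quantitative statement about $f_0$ near $0$ versus near $k/q$. The paper handles precisely these cases (its Cases~II and~III) by abandoning the IBP scheme in favor of explicit one-sided Lipschitz bounds on $\psi$ and $\psi\circ T$ (Lemmas~\ref{lem:varphixy}, \ref{lem:varphiTxTgamma}), a structural pinching $\theta\in(\tfrac{3}{8q},\tfrac{5}{8q})$ (Lemma~\ref{lem:almostcentered}), the elementary inequality in Lemma~\ref{lem:AksAs} to compare $f(s+kq^{-1})$ with $f(s+q^{-1})$, and two numerically verified inequalities (Lemmas~\ref{lem:CaseIInum}, \ref{lem:CaseIIInum}). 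That this required genuinely quantitative input is strong evidence that the qualitative concavity argument you are hoping to iterate cannot close the gap on its own.
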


The pre-Sturmian condition says that there exists  Lipschitz function $\psi: \T\to\R$  such that $$F(x): =f_c(x)+\psi(x)-\psi(Tx)$$
 is constant (denoted by $\beta$) on $C_\lambda$. Let $\tau: \T\to [\lambda, \lambda+1/q)=:C'_\lambda$ denote the inverse branch of $T$.  By Proposition~\ref{prop:bouschsturm} and its proof,
\begin{equation}\label{eqn:varphi'}
\psi'(x)=\sum_{n\ge 1}\frac{f_c'(\tau^n(x))}{q^n}, a.e.
\end{equation}
and
\begin{equation}\label{eqn:preStur}
\sum_{n=1}^\infty \int_{\tau^{n-1} (C_\lambda)} f_c'(x) dx=0.
\end{equation}

Proving Theorem \ref{Pre-S} is to check
$F(x) <\beta$ for $x$ outside $C_\lambda$. 
Before going to details which are unfortunately quite cumbersome, let us describe the strategy. It suffices to show that $F(x)< F(y)$ for some $y\in C_\lambda$. Put $f=f_0$. Then 
$$F(x)-F(y)=f(x+c)-f(y+c) + \psi(x)-\psi(y) -(\psi(Tx)-\psi(Ty)).$$
The estimate on $f(x+c)-f(y+c)$ will be based on the formula defining $f$, which is often a negative number with `big' absolute value and contributes as the `main term'. An upper bound on $\psi(x)-\psi(y)$ can be deduced from the formula (\ref{eqn:varphi'}). An lower bound on $\psi(T(x))-\psi(T(y))$ can also be deduced from (\ref{eqn:varphi'}), although we shall often use simply the fact $\psi(T(x))=\psi(T(y))$ if $q(x-y)\in \Z$.
\medskip

We will have to distinguish three cases according to the location of $x$. First let us present $\mathbb{T}\setminus C_\lambda$ as follows
$$ \mathbb{T}\setminus C_\lambda= J^-\cup J^+,$$
where $$J^-:=\Big(\lambda -\frac{q-1}{2q}, \lambda\Big),\quad J^+:= \Big(\lambda +\frac{1}{q}, \lambda +\frac{q+1}{2q}\Big].
$$
Let also $$ M=\Big(-c-\frac{1}{q}, -c+\frac{1}{q}\Big), \quad
   C^{-} = \Big(-c -\frac{1}{q}, \lambda\Big), \quad  C^+=\Big(\lambda +\frac{1}{q},  -c+ \frac{1}{q}\Big).
$$
So, $M$ is the disjoint union of $C^-, C_\lambda$ and $C^+$.
Notice $f_c$ is continuous (even analytic) and strictly concave in $M$ and it attains its maximal value at $-c$. Also notice that $\lambda\in C^-$ so that $-c \in C_\lambda$. 

We will check $F(x) <\beta$ for $x$ in different parts
of $\mathbb{T}\setminus C_\lambda$. 
Since $[-c, -c +q^{-1}]$ is of length $q^{-1}$, for any
$x \in J^+$ there exists a unique $x_0\in [-c, -c +q^{-1})$ such that $q(x-x_0)\in \mathbb{Z}$. Similarly, for any
$x \in J^-$ there exists a unique $x_0\in (-c-q^{-1}, -c]$ such that $q(x-x_0)\in \mathbb{Z}$. We will estimate $F(x)<\beta$ for $x \in J^+$ by $F(x_0)$ for some $x_0$ in $[-c, -c +q^{-1})$ (the right half of $M$), and for $x \in J^-$ by $F(x_0)$ for some $x_0$ in $(-c-q^{-1}, -c)$ (the left half of $M$). The interval
$(-c -q^{-1}, -c]$ will be cut into two by $\lambda$ and
the interval
$[-c,  -c+q^{-1})$ will be cut into two by $\lambda+q^{-1}$.

%Notice that $\mathbb{T}$ is the disjoint union of $J^-, C_\lambda$
%and $J^+$. 
\medskip

We shall consider the following three cases:

{\em Case I.} \ \ $x \in C^-\cup C^+$. %or $-c-q^{-1}<x<\lambda$,
%|x+c|<\frac{1}{q}$ and $x\not\in [\lambda, \lambda+q^{-1}]$,

%For each $x$ with $0<x+c\le \frac{1}{2}$, let $x_0$ be such that $q(x-x_0)\in \Z$ and $0\le x_0+c<\frac{1}{q}$.
%For each $x$ with $-\frac{1}{2}<x+c<0$, let $x_0$ be such that $q(x-x_0)\in \Z$ and $0\ge x_0>-\frac{1}{q}$.

{\em Case II.}\  $x\in J^+$ and $x_0\in [-c, \lambda+q^{-1}]$;
or $x\in J^-$ and $x_0\in [\lambda, -c]$.
%$$-c\le x_1<\lambda+q^{-1}$; or $-\frac{1}{2}< x+c<-\frac{1}{q}$ and $\lambda <x_1\le -c$.

{\em Case III.} $x\in J^+$ and  $x_0\in (\lambda+q^{-1}, -c+q^{-1})$;
or $x\in J^-$ and  $x_0\in (-c-q^{-1},\lambda)$.
%$\lambda+q^{-1}\le x_1<-c+q^{-1}$; or
%$-\frac{1}{2}< x+c<-\frac{1}{q}$ and $\lambda-q^{-1} <x_1< \lambda$.
\medskip

Note that if $q=2$, then $M=\mathbb{T}\setminus \{-c -1/2\}$ and we only need  to consider Case I, 
because $F(-c-1/2)=-\infty$. Similarly, if $q=3$ then we only need to consider Case I and Case II.

%Note that Case II only happens when $q\ge 3$ and Case III only when $q\ge 4$.
%In the following, we shall only give details for $x\in (\lambda+1/q, -c+1/2)$, as the other case is similar.

Before going further, let us state two useful elementary facts.

\begin{lemma}\label{ineq:E1} Given $\alpha\in (0,1)$, the function $h(x)=\frac{\sin x}{\sin \alpha x}$ is strictly decreasing in $(0, \pi)$.
\end{lemma}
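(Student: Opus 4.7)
The plan is to reduce the monotonicity of $h$ to that of an auxiliary one-variable function, so that the dependence on $\alpha$ disappears. Concretely, since $\sin(\alpha x)>0$ on $(0,\pi)$ (as $\alpha x\in (0,\pi)$), $h$ is positive and smooth there, and it suffices to show $(\log h)'(x)<0$. A direct computation gives
\[
(\log h)'(x)=\cot x-\alpha\cot(\alpha x),
\]
so the claim is equivalent to $\alpha\cot(\alpha x)>\cot x$ for all $x\in(0,\pi)$.

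Next, I would introduce $\phi(t):=t\cot t$ on $(0,\pi)$ and rewrite the desired inequality as $\phi(\alpha x)>\phi(x)$, which since $\alpha x<x$ (and both lie in $(0,\pi)$) follows from the strict decrease of $\phi$ on $(0,\pi)$. So everything reduces to proving that $\phi$ is strictly decreasing on $(0,\pi)$.

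For this last step, compute
\[
\phi'(t)=\cot t-t\csc^2 t=\frac{\sin t\cos t-t}{\sin^2 t}=\frac{\tfrac12\sin 2t-t}{\sin^2 t}.
\]
Then I would invoke the standard inequality $\sin s<s$ for all $s>0$ (immediate from $(s-\sin s)'=1-\cos s\ge 0$ with strict positivity except on a discrete set) to conclude $\tfrac12\sin 2t-t<0$ for $t\in(0,\pi/2)$, and note that for $t\in[\pi/2,\pi)$ we have $\sin 2t\le 0<2t$ automatically. Hence $\phi'(t)<0$ on $(0,\pi)$, which finishes the proof.

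I do not expect any real obstacle: the whole argument is a one-page calculus exercise once the reduction to $\phi(t)=t\cot t$ is spotted. The only mild subtlety is that $\cot x$ changes sign at $\pi/2$, which could make a sign-tracking proof awkward; this is exactly what the reformulation via $\phi$ circumvents, since $\phi$ is smooth and unambiguously monotone on the whole interval $(0,\pi)$.
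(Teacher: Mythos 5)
Your proof is correct but follows a different route from the paper's. The paper computes $h'(x)$, observes that its numerator $N(x)=\sin(\alpha x)\cos x-\alpha\sin x\cos(\alpha x)$ vanishes at $x=0$ and has derivative $N'(x)=(\alpha^2-1)\sin x\sin(\alpha x)<0$ on $(0,\pi)$, and concludes $N<0$ there, hence $h'<0$. You instead take the logarithmic derivative $(\log h)'=\cot x-\alpha\cot(\alpha x)$, multiply through by $x$ to recast the claim as $\phi(\alpha x)>\phi(x)$ for the universal function $\phi(t)=t\cot t$, and then verify $\phi'(t)=\bigl(\tfrac12\sin 2t-t\bigr)/\sin^2 t<0$ via $\sin s<s$. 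Both arguments are short calculus computations; what yours buys is that the parameter $\alpha$ is eliminated entirely, so the inequality becomes the monotonicity of a single, $\alpha$-independent function (a packaging that also neatly sidesteps the sign change of $\cot x$ at $\pi/2$), whereas the paper's version avoids logarithms but relies on the slightly less obvious observation that $N(0)=0$ together with $N'<0$ forces $N<0$.
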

\begin{proof} We can continuously extend $h$ on $0$ by $h(0)=1/\alpha$ and we have $h'(0)=0$. By direct computation, $$h'(x)=\frac{\sin (\alpha x)\cos x -\alpha \sin x \cos \alpha x}{\sin^2 (\alpha x)},$$
	$$(\sin^2 (\alpha x) h'(x))'=(\alpha^2-1) \sin x \sin (\alpha x)<0.$$ Therefore $h'(x)<0$ on $(0, \pi)$, which implies that $h$ is strictly decreasing.
\end{proof}

\begin{lemma} \label{lem:AksAs}
	For any $q\ge 2$, any integer $1\le k\le\frac{q-1}{2}$ and any $s\in (0, q^{-1})$, we have
	$$\sin \pi (s+k\cdot q^{-1})\ge \sin \pi (s+q^{-1}).$$
\end{lemma}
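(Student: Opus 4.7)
The plan is to reduce to the non-trivial case and then apply a sum-to-product identity. First, I would observe that the conclusion is vacuous for $q=2$ (no admissible $k$), and reduces to the trivial equality when $k=1$, which exhausts the cases $q\in\{3,4\}$. Thus the substantive situation is $q\ge 5$ with $2\le k\le (q-1)/2$, and only this range needs genuine work.

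In that range, my idea is to factor the difference via the identity $\sin A-\sin B=2\sin\frac{A-B}{2}\cos\frac{A+B}{2}$, with $A=\pi(s+k/q)$ and $B=\pi(s+1/q)$, obtaining
$$\sin\pi\!\left(s+\tfrac{k}{q}\right)-\sin\pi\!\left(s+\tfrac{1}{q}\right)=2\sin\frac{\pi(k-1)}{2q}\,\cos\pi\!\left(s+\frac{k+1}{2q}\right),$$
and then show that both factors on the right are non-negative. The sine factor is clearly positive, since $k\ge 2$ together with $k-1<q$ places $\pi(k-1)/(2q)$ strictly inside $(0,\pi/2)$.

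The key step is to verify that the cosine factor is non-negative, i.e.\ that $s+(k+1)/(2q)\le 1/2$. Since $s<1/q$, this reduces to the arithmetic inequality $(k+3)/(2q)\le 1/2$, equivalently $k\le q-3$; and this follows from the hypothesis $k\le(q-1)/2$ precisely when $q\ge 5$, so one checks that the bound $k\le(q-1)/2\le q-3$ leaves the relevant angle in $[0,\pi/2]$. I do not expect a genuine obstacle; the only subtle point is to dispose of the boundary cases $q\le 4$ at the outset so that the inequalities in the main estimate leave strict room.
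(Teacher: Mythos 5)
Your proof is correct. It is essentially the same argument as the paper's, dressed in a sum-to-product identity rather than a direct appeal to monotonicity and symmetry of $\sin$: the paper observes that $\sin\pi x$ is increasing on $[0,1/2]$ and symmetric about $1/2$, then checks $0<s+q^{-1}<\min\{1/2,\,s+kq^{-1}\}$ and $(s+q^{-1})+(s+kq^{-1})\le (k+3)/q\le 1$ for $q\ge 5$, which is exactly your bound $s+(k+1)/(2q)\le 1/2$ multiplied out. In both write-ups the entire content is the reduction $k\ge 2\Rightarrow q\ge 5$ followed by the arithmetic $k\le(q-1)/2\le q-3$; your identity $\sin A-\sin B=2\sin\frac{A-B}{2}\cos\frac{A+B}{2}$ is simply a clean way to package the same monotonicity/symmetry fact. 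Either presentation is fine; the sum-to-product version has the minor advantage of making the sign of the difference visible at a glance.
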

\begin{proof}
	This is of course true for $k=1$. So assume $k\ge 2$ which implies that $q\ge 5$.
	Notice that $\sin \pi x$ is increasing on $[0,1/2]$
	and symmetric about $x=1/2$.
	Then the announced  inequality holds because
	$$0<s+q^{-1}<\min \left(\frac{1}{2}, s+k\cdot q^{-1}\right)$$
	and $$s+q^{-1}+s +k \cdot q^{-1}\le \frac{k+3}{q}\le \frac{\frac{q-1}{2}+3}{q}\le 1.$$
\end{proof}

%Since $f_c'$ is decreasing in $C_\lambda$, the equation (\ref{eqn:varphi'}) immediately implies that
%\begin{equation}\label{eqn:varphixy0}
%\psi(y)-\psi(x)\le f_c'(\lambda)\frac{y-x}{q-1}=f'(\theta-q^{-1})\frac{y-x}{q-1},
%\end{equation}
%for any $x<y$. If $(x,y)$ is an interval disjoint from $C_\lambda$, we have the following improvement:
\subsection{Variation of $\psi$}
The following lemmas give us the estimates for the variations of $\psi$ and $\psi\circ T$.  Put
$$\theta:=\lambda+1/q+c\in (0, 1/q).$$

\begin{lemma}\label{lem:varphixy}\ \, \\
\indent {\rm (i)}\ For any $\lambda+q^{-1}\le x<y\le \lambda+1$, we have
$$\psi(y) -\psi(x)\le f\left(q^{-1}-\theta-\frac{y-x}{q-1}\right)-f\left(q^{-1}-\theta\right)\le f(0)-f(q^{-1}-\theta).$$
\indent {\rm (i)'}\ For any $\lambda+q^{-1}-1 \le y<x\le \lambda$, we have 
$$\psi(y)-\psi(x)\le f\left(\theta-\frac{x-y}{q-1}\right)-f\left(\theta\right)\le f(0)-f(\theta).$$ 
\indent {\rm (ii)} For any $x<y$ with $y-x<1$, we have
\begin{eqnarray*}\psi(y)-\psi(x)
	 & \le & f\left(q^{-1}-\theta-\frac{y-x}{q}\right)-f\left(q^{-1}-\theta\right)-f'(q^{-1}-\theta) \frac{y-x}{q(q-1)}\\
	 & \le & -f'(q^{-1}-\theta)\frac{y-x}{q-1}. 
\end{eqnarray*}
\indent {\rm (ii)'} For $y<x$ with $x-y<1$, we have 
\begin{eqnarray*}
\psi(y)-\psi(x) 
& \le & f\left(\theta-\frac{x-y}{q}\right)-f\left(\theta\right)-f'(\theta)\frac{x-y}{q(q-1)}\\
& \le & -f'(\theta) \frac{x-y}{q-1}.
\end{eqnarray*}
%\marginpar{changed}
\end{lemma}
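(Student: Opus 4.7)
The plan is to integrate (\ref{eqn:varphi'}) term by term and then make the change of variable $u=\tau^n(t)$. Since $\tau$ is a contraction with $|\tau'|=1/q$ on $\T$ off the single jump point $\lambda_0:=q\lambda\bmod 1$, and since $f_c'$ is bounded on $C_\lambda$, Fubini and dominated convergence give
\[
\psi(y)-\psi(x)=\int_x^y\psi'(t)\,dt=\sum_{n\ge 1}\frac{1}{q^n}\int_x^y f_c'(\tau^n(t))\,dt=\sum_{n\ge 1}\int_{\tau^n([x,y])} f_c'(u)\,du,
\]
where the change of variable is valid because $\tau^n$ is injective and piecewise affine with slope $q^{-n}$; each image $\tau^n([x,y])$ is thus a Borel subset of $C_\lambda$ of Lebesgue measure $(y-x)/q^n$. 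Under the hypothesis $\lambda\in(-1/q-c,-c)$, the arc $C_\lambda$ is strictly between two consecutive singularities of $f_c$, so $f_c$ is analytic and strictly concave on $C_\lambda$ and $f_c'$ is strictly decreasing there, with $f_c'(\lambda)>0>f_c'(\lambda+1/q)$.

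For part (i), where $[x,y]\subset[\lambda+1/q,\lambda+1]=\T\setminus C_\lambda$, the key geometric point is that the images $\{\tau^n([x,y])\}_{n\ge 1}$ are pairwise disjoint: indeed $\tau^n([x,y])\subset\tau^{n-1}(C_\lambda)\setminus\tau^n(C_\lambda)$, and these nested annuli are disjoint across $n$. Hence $E:=\bigsqcup_{n\ge 1}\tau^n([x,y])$ is a Borel subset of $C_\lambda$ of total measure $(y-x)/(q-1)$, and by the decreasing rearrangement inequality applied to the monotone function $f_c'$,
\[
\psi(y)-\psi(x)=\int_E f_c'(u)\,du\le\int_{\lambda}^{\lambda+(y-x)/(q-1)}f_c'(u)\,du=f_c\!\left(\lambda+\tfrac{y-x}{q-1}\right)-f_c(\lambda).
\]
Using $f_c(\cdot)=f(\cdot+c)$, the evenness of $f$, and the identity $\lambda+c=\theta-q^{-1}$, this rewrites as $f(q^{-1}-\theta-(y-x)/(q-1))-f(q^{-1}-\theta)$, which is the first inequality in (i); the second inequality follows from $f\le f(0)$ in the concavity interval $(-q^{-1},q^{-1})$. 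Part (i)$'$ is the mirror statement and is proved identically, with rearrangement to the right endpoint of $C_\lambda$ and $\lambda+q^{-1}+c=\theta$.

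For part (ii) the disjointness of the $\tau^n([x,y])$ is lost because $[x,y]$ may intersect $C_\lambda$, so I separate the $n=1$ term and bound the tail crudely. The image $\tau([x,y])$ is still a Borel set in $C_\lambda$ of measure $(y-x)/q$, and rearrangement gives $\int_{\tau([x,y])}f_c'(u)\,du\le f_c(\lambda+(y-x)/q)-f_c(\lambda)$; for $n\ge 2$ the trivial bound $\int_{\tau^n([x,y])}f_c'(u)\,du\le f_c'(\lambda)(y-x)/q^n$ sums to $f_c'(\lambda)(y-x)/(q(q-1))$. Adding the two contributions and converting with $-f'(q^{-1}-\theta)=f_c'(\lambda)$ yields the first inequality in (ii). The second inequality in (ii) follows either by dominating the first (since $f_c(\lambda+(y-x)/q)-f_c(\lambda)\le f_c'(\lambda)(y-x)/q$) or directly from the pointwise bound $\psi'(t)\le f_c'(\lambda)/(q-1)$ integrated over $[x,y]$. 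Part (ii)$'$ is analogous, using the reverse rearrangement at the right endpoint of $C_\lambda$ and the fact that $f_c'(\lambda+q^{-1})=f'(\theta)$ is the minimum of $f_c'$ on $C_\lambda$.

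The main source of friction, rather than a deep obstacle, is legitimising the change of variable $u=\tau^n(t)$ at the countably many jumps of $\tau^n$ on $[x,y]$: on each continuity piece one has the honest identity $f_c'(\tau^n(t))/q^n=(f_c\circ\tau^n)'(t)$, and summing over pieces produces the Borel image $\tau^n([x,y])\subset C_\lambda$ of the correct total measure. The remainder is the arithmetic verification of the translation $f\leftrightarrow f_c$ and the evenness of $f$ that turns the $f_c$-bounds supplied by the rearrangement inequality into the $f$-bounds appearing in the statement.
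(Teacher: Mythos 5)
Your proof is correct and follows the paper's approach essentially verbatim: both convert $\psi(y)-\psi(x)$ into $\sum_{n\ge 1}\int_{\tau^n([x,y])}f_c'$, use the pairwise disjointness of the images $\tau^n([x,y])\subset C_\lambda$ together with the monotonicity of $f_c'$ on $C_\lambda$ (the paper's phrasing is the same rearrangement argument you make explicit) to handle (i), and isolate the $n=1$ term with the crude bound $\int_{\tau^n([x,y])}f_c'\le f_c'(\lambda)\,(y-x)/q^n$ for $n\ge 2$ to handle (ii). Your extra attention to the change of variable at the jump point of $\tau$ is a detail the paper skips, but the structure and key estimates are identical.
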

\begin{proof} We shall only prove (i) and (ii) and leave the analogous (i)' and (ii)' for the reader. Let $J=(x, y)$ and for each $n\ge 0$, $J_n:=\tau_{\lambda(c)}^n(J)$. By the formula (\ref{eqn:varphi'}), we have
$$\Delta:=\psi(y) -\psi(x)=\sum_{n=1}^\infty \int_{J_n} f_c'(x) dx.$$

(i) The second inequality is obvious because $f$ attains its maximal value at $0$. Let us prove the first inequality. Since $J\cap C_\lambda'=\emptyset$ and $\tau_{\lambda(c)}(\T)\subset C_\lambda'$, $J_n$'s ($n\ge 1$) are disjoint sets contained in $C_{\lambda}'$.
Together with the fact that $f'_c$ is decreasing in $C_{\lambda}$, we immediately obtain the following estimate:
$$\Delta\le \int_{\lambda}^{\lambda+(y-x)/(q-1)} f_c'(t)dt=f\left(\theta-q^{-1}+\frac{y-x}{q-1}\right)-f\left(\theta-q^{-1}\right).$$
Since $f$ is an even function, the desired inequality follows.

(ii) Since $J_1$ is contained in $C_\lambda$
and $f_c'$ is decreaing in $C_\lambda$,   we have $$\int_{J_1}f_c'(t)dt\le \int_{\lambda}^{\lambda+(y-x)/q} f_c'(t)dt= f\left(\theta-q^{-1}+\frac{y-x}{q}\right)-f\left(\theta-q^{-1}\right),$$
and for each $n\ge 2$, we simply estimate $$\int_{J_n} f_c'(t) dt \le f_c'(\lambda)|J_n|=f'(\theta-q^{-1}) (y-x)/q^n.$$
The first inequality follows.
The second inequality holds because for any $-\theta<u<q^{-1}-\theta$, $f'(u)\ge f'(q^{-1}-\theta)$.
\end{proof}

\begin{lemma}\label{lem:varphiTxTgamma}
	\ \,\\
\indent  {\rm (i)}\ For any $x\in (\lambda+q^{-1},\lambda+2q^{-1})$ and $t:=x-\lambda-q^{-1}$, we have
$$\psi(T(x))-\psi(T(\lambda+q^{-1}))\ge f(q^{-1}-\theta-t)-f(q^{-1}-\theta)+f'(\theta)\frac{t}{q-1}.$$
\indent  {\rm (i)'} For $x\in (\lambda-q^{-1},\lambda)$ and $t:=\lambda-x$, we have
$$\psi(T(x))-\psi(T(\lambda)) \ge f(\theta-t)-f(\theta)+f'(q^{-1}-\theta)\frac{t}{q-1}.$$
\end{lemma}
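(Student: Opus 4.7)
The plan is to reduce Lemma~\ref{lem:varphiTxTgamma} to the already-established Lemma~\ref{lem:varphixy}(ii) and (ii)' by invoking the functional equation supplied by the pre-Sturmian condition on $C_\lambda$, namely
$$\psi(Ty)=f_c(y)+\psi(y)-\beta \quad (\forall y\in C_\lambda).$$
The crucial elementary observation is that $T$ identifies the endpoints of $C_\lambda$, and that the ``external'' point $x$ in each case has a twin inside $C_\lambda$ with the same image under $T$: explicitly, in case (i) one has $T(x)=T(\lambda+t)$ and $T(\lambda+q^{-1})=T(\lambda)$, while in case (i)' one has $T(x)=T(\lambda+q^{-1}-t)$ and $T(\lambda)=T(\lambda+q^{-1})$. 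Both of these follow from the fact that shifting by $q^{-1}$ inside the argument of $T$ shifts the image by $1\in\Z$.

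For case (i), applying the functional equation at $y=\lambda$ and $y=\lambda+t$, both inside $C_\lambda$, and subtracting will give
$$\psi(T(x))-\psi(T(\lambda+q^{-1}))=\bigl[f_c(\lambda+t)-f_c(\lambda)\bigr]+\bigl[\psi(\lambda+t)-\psi(\lambda)\bigr].$$
Using evenness of $f$ together with $\lambda+c=\theta-q^{-1}$, the first bracket rewrites as $f(q^{-1}-\theta-t)-f(q^{-1}-\theta)$, matching the first two terms of the announced lower bound. It then only remains to show $\psi(\lambda+t)-\psi(\lambda)\ge f'(\theta)\,t/(q-1)$. This is precisely what Lemma~\ref{lem:varphixy}(ii)' delivers when applied with $y=\lambda$ and $x=\lambda+t$ (the hypothesis $x-y=t<1$ is trivial since $t<q^{-1}$): the weak upper bound there reads $\psi(\lambda)-\psi(\lambda+t)\le -f'(\theta)\,t/(q-1)$, which is the needed inequality.

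Case (i)' is handled by the same recipe in mirror image. Applying the functional equation at $y=\lambda+q^{-1}$ and $y=\lambda+q^{-1}-t$ yields
$$\psi(T(x))-\psi(T(\lambda))=\bigl[f(\theta-t)-f(\theta)\bigr]+\bigl[\psi(\lambda+q^{-1}-t)-\psi(\lambda+q^{-1})\bigr],$$
and Lemma~\ref{lem:varphixy}(ii) applied with $x=\lambda+q^{-1}-t$ and $y=\lambda+q^{-1}$ gives $\psi(\lambda+q^{-1})-\psi(\lambda+q^{-1}-t)\le -f'(q^{-1}-\theta)\,t/(q-1)$, finishing the argument.

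There is essentially no obstacle here: the heavy analytic lifting (controlling increments of the Lipschitz function $\psi$ using the decaying series $\psi'=\sum_n f_c'(\tau^n\cdot)/q^n$) has already been carried out in Lemma~\ref{lem:varphixy}. The only things to be careful about are the two elementary $T$-identifications mentioned above and the bookkeeping with the even function $f$ to make the $f_c$-values on the two ends of $C_\lambda$ line up with $\theta$ and $q^{-1}-\theta$. Everything else is a one-line substitution.
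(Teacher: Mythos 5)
Your proof is correct, and it takes a genuinely different---and tidier---route than the one in the paper. The paper establishes the lemma directly: it differentiates $\Psi=\psi\circ T$ over the interval $J=(\lambda+q^{-1},x)$, uses formula (\ref{eqn:varphi'}) to write $\Psi'(y)=f_c'(y-q^{-1})+\psi'(y-q^{-1})$, bounds the second term from below by $f'(\theta)/(q-1)$ via the monotonicity of $f_c'$ on $C_\lambda$, and integrates. You instead exploit two elementary observations: the pre-Sturmian cocycle relation $\psi(Ty)=f_c(y)+\psi(y)-\beta$ on $C_\lambda$, and the fact that the degree-$q$ map $T$ identifies $x$ with its $q^{-1}$-translate inside $C_\lambda$ while also identifying the two endpoints of $C_\lambda$. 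This reduces the $\psi\circ T$-increment in question to an $f_c$-increment plus a $\psi$-increment, both evaluated between points of $C_\lambda$, and Lemma~\ref{lem:varphixy}(ii) and (ii)' finish the job with no new differentiation or integration. Both arguments ultimately rest on the series representation of $\psi'$, but your factorization reuses the already-established estimates, which is more economical and sidesteps the minor bookkeeping slips in the paper's displayed proof (where $f_c'(\lambda)$ should read $f_c'(\lambda+q^{-1})=f'(\theta)$, and the $q(q-1)$ in the denominator after integrating should be $q-1$). A further benefit of your route is that it makes transparent the duality between the lower bounds of Lemma~\ref{lem:varphiTxTgamma} and the upper bounds of Lemma~\ref{lem:varphixy}.
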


\begin{proof} We only deal with (i). Put $\lambda^*=\lambda+q^{-1}$
	which is the right end point of $C_\lambda$, $J=(\lambda^*,x)$ and $\Psi=\psi\circ T$. We have $t=|J|$. For any $y\in J$, $\tau(Ty)=y-q^{-1}\in C_\lambda'$. Hence
\begin{multline*}
\Psi'(y)=q\psi'(Ty)=q\sum_{n\ge 1} f_c'(\tau^n(Ty))q^{-n}\\
=f_c'(y-1/q)+\psi'(y-q^{-1})\ge f_c'(y-q^{-1})+\frac{f_c'(\lambda)}{q-1},
\end{multline*}
where,  for the last inequality, we used the formula (\ref{eqn:varphi'}), the facts $\sum_{n=1}^\infty |J_n|=\frac{1}{q-1}$ and  $f_c'$ is decreasing in $C_\lambda$.
Therefore, integrate to get
$$\psi(T(x))-\psi(T(x_0))\ge \int_J f_c'(y-q^{-1}) dy + \frac{t f_c'(\lambda)}{q(q-1)},$$
which is equivalent to the desired inequality.
\end{proof}
%\end{document}
\subsection{Proof of $F(x)<\beta$ in Case I}
We deal with Case I in this subsection. The argument is motivated by Jenkinson \cite{Jenkinson2007}.

\begin{prop} \label{prop:caseI}
For $x\in C^-\cup C^+$, we have $F(x)< \beta$.
\end{prop}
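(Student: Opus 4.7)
By the symmetry about $-c$, it suffices to prove $F(x) < \beta$ for $x \in C^+ = (\lambda + 1/q, -c + 1/q)$; the case $x \in C^-$ is analogous with the two halves of $M$ swapped. The key observation is that $x - 1/q \in (\lambda, -c) \subset C_\lambda$ and $T(x - 1/q) = T(x) \pmod 1$, so by the pre-Sturmian identity $F \equiv \beta$ on $C_\lambda$ we have $F(x - 1/q) = \beta$ and also $F(x_0) = \beta$ at the right endpoint $x_0 := \lambda + 1/q$ of $C_\lambda$. Thus by absolute continuity of $F$ on $M$,
$$F(x) - \beta = \int_{x - 1/q}^{x} F'(s)\,ds = \int_{x_0}^{x} F'(s)\,ds,$$
the sub-integral on $(x - 1/q, x_0) \subset C_\lambda$ vanishing because $F$ is constant there. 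The task reduces to showing $F'(s) < 0$ for $s \in (x_0, x) \subset C^+$.

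Following Jenkinson's strategy, I introduce the conjugate preimage $s^* := s - 1/q \in C_\lambda$, which satisfies $Ts^* = Ts$ and $F'(s^*) = 0$. Subtracting $F'(s) = f_c'(s) + \psi'(s) - q\psi'(Ts)$ and its vanishing counterpart at $s^*$ produces the clean expression
$$F'(s) = [f_c'(s) - f_c'(s^*)] + [\psi'(s) - \psi'(s^*)].$$
Since both $s, s^* \in M = (-c - 1/q, -c + 1/q)$ and $s > s^*$, the strict concavity of $f_c$ on $M$ makes the first bracket strictly negative. To dominate the second bracket, I iterate the chain-rule relation $\psi'(\xi) = q^{-1}(f_c'(\tau \xi) + \psi'(\tau \xi))$, obtained by differentiating the functional equation $\psi(\xi) + \beta = \psi(\tau \xi) + f_c(\tau \xi)$, to obtain the telescoping series
$$\psi'(s) - \psi'(s^*) = \sum_{n \ge 1} \frac{f_c'(\tau^n s) - f_c'(\tau^n s^*)}{q^n}.$$
An elementary induction using $T\tau = \mathrm{id}$ and the fact that $\tau^n s, \tau^n s^* \in C_\lambda'$ yields $|\tau^n s - \tau^n s^*| \le (q-1)/q^{n+1}$, and since $C_\lambda'$ is uniformly bounded away from the singularities of $f_c$ by $\min(\theta, 1/q - \theta) > 0$ (Lemma~\ref{lem:vcgamma}(2)), $f_c'$ is Lipschitz on $C_\lambda'$ and the series is absolutely bounded by a constant depending only on $q$ and $\theta$.

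The main obstacle is the final quantitative comparison, ensuring that the bound on $|\psi'(s) - \psi'(s^*)|$ never overwhelms the strict-concavity contribution $|f_c'(s) - f_c'(s^*)| = \int_{s^*}^{s} |f_c''(u)|\,du$ from the first bracket. Near the singularity $-c + 1/q$ this comparison is automatic since $|f_c''|$ is non-integrable there and the first bracket blows up; in the interior of $(x_0, x)$, where the first bracket is only moderate, one must exploit the universal lower bound $\min_M |f_c''| = |f_c''(-c)| = \pi^2(q^2 - 1)/3 > 0$ together with the uniform positivity of $\min(\theta, 1/q - \theta)$ to conclude that the first bracket still dominates. Integrating the resulting strict negativity of $F'(s)$ over $(x_0, x)$ then yields $F(x) < \beta$, completing the proof.
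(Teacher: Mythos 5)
Your set-up is correct and genuinely different from the paper's. You reduce $F(x)-\beta$ to $\int_{x_0}^x F'(s)\,ds$, and the identity
\[
F'(s) \;=\; \bigl(f_c'(s)-f_c'(s^*)\bigr) + \bigl(\psi'(s)-\psi'(s^*)\bigr), \qquad s^*=s-1/q,
\]
obtained from $F'(s^*)=0$ on the interior of $C_\lambda$, is valid a.e. However the argument is incomplete, and the route you suggest for closing it does not in fact close it.

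First, the rate claim $|\tau^n s - \tau^n s^*|\le (q-1)/q^{n+1}$ is false. The map $\tau$ is a piecewise-affine contraction of ratio $1/q$, but it has a branch jump on the circle, and the jump can re-separate iterates. For $q=2$ with $\lambda=-3/4$ (the Gelfond case $c=1/2$) and $s=\lambda+1/q=-1/4$, $s^*=\lambda=-3/4$, one computes $\tau s=-5/8$, $\tau s^*=-3/8$, then $\tau^2 s=-5/16$, $\tau^2 s^*=-11/16$, so $|\tau^2 s-\tau^2 s^*|=3/8$, whereas your bound predicts $\le 1/8$. This particular bound is not load-bearing (boundedness of $f_c'$ on $C_\lambda$ already gives absolute convergence of the series), but it shows the ``elementary induction'' you invoke does not hold.

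Second and more seriously, you explicitly acknowledge a ``main obstacle'' — showing the concavity bracket dominates the $\psi'$ bracket — and the comparison you sketch does not work. Your proposal is to compare $|f_c'(s)-f_c'(s^*)|\ge q^{-1}\min_M|f_c''|=\pi^2(q^2-1)/(3q)$ against the crude upper bound $|\psi'(s)-\psi'(s^*)|\le \frac{2}{q-1}\max_{C_\lambda}|f_c'|$. Already in the Gelfond case $q=2$, $\theta=1/4$, the left side is $\pi^2/2\approx 4.93$ while the right side is $2\pi\approx 6.28$; the crude bounds simply fail to give what you need, even though the actual values (numerically, $\approx 6.28$ versus $\approx 0.44$ at $s=\lambda^*$) are heavily in your favor. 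So a pointwise $F'<0$ argument, if true, requires a much finer estimate of $\psi'(s)-\psi'(s^*)$ than you give, and as written your proof has a genuine gap.

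The paper takes a different and more economical route. Rather than estimating $F'$ pointwise, it bounds the three \emph{finite differences} $f_c(x)-f_c(\lambda^*)$, $\psi(x)-\psi(\lambda^*)$, $\psi(Tx)-\psi(T\lambda^*)$ directly, using Lemma~\ref{lem:varphixy}(i) and Lemma~\ref{lem:varphiTxTgamma}(i), which turn the pre-Sturmian formula (\ref{psi'}) into explicit integrated estimates on $C_\lambda$ (where $f_c'$ is monotone). After discarding a strictly negative term it arrives at $F(x)-\beta < H(\theta,t)$ where $H(\theta,t)=f(\theta+t)-f(\theta)-f'(\theta)\,t/(q-1)$, and concludes by the simple calculus fact $H(\theta,0)=0$, $\partial H/\partial t<0$. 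This ``endpoint plus monotonicity'' structure sidesteps the delicate pointwise comparison your approach requires. If you wish to salvage your route, you should estimate $\psi'(s)-\psi'(s^*)$ via the monotonicity of $f_c'$ on $C_\lambda$ and the disjointness of the sets $\tau^n(E)$ (as in the old $q=2$ argument of Bousch--Jenkinson with the function $A_x$), rather than via a Lipschitz constant times a length bound.
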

\begin{proof} We only deal with the case $x\in C^+= (\lambda+q^{-1}, -c+q^{-1})$ as the other case is similar.
Put $\lambda^*=\lambda+q^{-1}$ and $t=x-\lambda^*(>0)$.
Write
$$
 F(x)-\beta =F(x)-F(\lambda^*)\\
  =f_c(x)-f_c(\lambda^*)+\psi(x)-\psi(\lambda^*)-(\psi(T(x))-\psi(T(\lambda^*)).
$$
%Since $f_c$ is decreasing in $[-c, -c+q^{-1}]$, we have
Notice that $x+c = x-\lambda^* + \theta$, we have
$$
 f_c(x) - f_c(\lambda) = f(\theta +t) -f(\theta).
$$
%, we have
%$f_c(x) - f_c(\lambda) = f(\theta +t) -f(\theta)$.
By Lemmas~\ref{lem:varphixy} (i) and~\ref{lem:varphiTxTgamma} (ii),
we have
$$
\psi(x)-\psi(\lambda^*) \le f\left(q^{-1}-\theta-\frac{t}{q-1}\right)-f(q^{-1}-\theta)
$$
and
$$
  \psi(T(x))-\psi(T(\lambda^*))\ge
  f(q^{-1}-\theta-t)-f(q^{-1}-\theta)+f'(\theta) \frac{t}{q-1}
$$
Therefore $F(x) -\beta$ is bounded by
\begin{align*}
%F(x)-\beta &\le
f(\theta +t) -f(\theta) + %(f(\theta+t)-f(\theta)) +
f(q^{-1}-\theta-\frac{t}{q-1})-f(q^{-1}-\theta-t)-f'(\theta)\frac{t}{q-1}.
\end{align*}
The sum of the third and the forth terms are strictly negative,
because  $f$ is strictly decreasing in $(0,q^{-1})$ and $t>0$ and $t+\theta<q^{-1}$, so that $q^{-1}>q^{-1}-\theta-\frac{t}{q-1}> q^{-1}-\theta-t>0$. Thus we get
$$F(x)-\beta< f(\theta+t)-f(\theta)-f'(\theta) \frac{t}{q-1}=:H(\theta,t).$$
Since $$\frac{\partial H}{\partial t}=f'(\theta+t)-\frac{1}{q-1}f'(\theta)<  \frac{q-2}{q-1} f'(\theta)\le 0,$$
we conclude that $F(x)-F(\beta)< H(\theta,0)=0$.
%Note that
%\begin{align*}
%& \frac{\partial H}{\partial t}\\
%=& f'(\theta+t)-\frac{1}{q-1}f'(q^{-1}-\theta-\frac{t}{q-1})+f'(q^{-1}-\theta-t)-\frac{1}{q-1} f'(\theta)\\
%= & \left(f'(\theta+t)-f'(\theta-q^{-1}+t)\right)+\frac{1}{q-1} \left(f'(-\theta)-f'(-\theta+q^{-1}-\frac{t}{q-1}\right)\\
%&
%\end{align*}
%By the mean value theorem, there exist $\theta_1>\theta$ and $\theta_2<\theta+t-1/q<\theta_1$ such that
%$$F(x)-\beta=
%f'(\theta_1)t-f'(\theta_2)(t-t/(q-1))-f'(\theta) t/(q-1).$$
%Since $f'$ is decreasing in $(-1/q, 1/q)$, we conclude that $F(x)-\beta<0$.
\end{proof}
Note that the proposition above completes the proof of the theorem in the case $q=2$.

\subsection{Proof of $F(x)<\beta$ in Case II}
%For $q\ge 3$, we have to consider the case where $(-c,x)$ contains singularities of $f_c$. Taking advantage of the form $f$, we see that the %interval $C_\lambda$ is almost centered at $-c$.
The following estimates of $\theta$ are needed in the proofs in Case II and Case III. Recall that $\theta = \lambda^* + c = \lambda + q^{-1} +c \in (0, q^{-1})$.

%To deal with this case and the next case, we shall need the %following

\begin{lemma}\label{lem:almostcentered} Assume $q\ge 3$. Then
$$\frac{3}{8q}<\theta<\frac{5}{8q}.$$
\end{lemma}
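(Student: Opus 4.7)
The plan is to deduce the bound on $\theta = \lambda + 1/q + c$ from the pre-Sturmian identity~\eqref{eqn:preStur}, namely
\begin{equation*}
v_c(\lambda) \;=\; \sum_{n=1}^\infty \int_{\tau_\lambda^{n-1}(C_\lambda)} f_c'(x)\,dx \;=\; 0,
\end{equation*}
which (by Proposition~\ref{prop:bouschsturm} together with Lemma~\ref{lem:vcgamma}) uniquely determines $\lambda$ in terms of $c$. First I would change variables to $y = x+c$, so that $f_c'(x) = f_0'(y)$ and $C_\lambda$ becomes the interval $[\theta - 1/q,\theta]$ centered roughly at $y = 0$, where $f_0$ is even, unimodal with maximum at $0$, and $f_0' > 0$ on $(-1/q,0)$, $f_0' < 0$ on $(0,1/q)$. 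The $n = 1$ contribution telescopes to
\begin{equation*}
\int_{C_\lambda} f_c'\,dx \;=\; f_0(\theta) - f_0(1/q - \theta),
\end{equation*}
which vanishes at $\theta = 1/(2q)$ and, by strict concavity of $f_0$, is strictly monotone in $\theta$ on $(0,1/q)$.

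Next I would control the tail $E(\theta) := \sum_{n\ge 2} \int_{\tau_\lambda^{n-1}(C_\lambda)} f_c'$. Since $|\tau_\lambda^{n-1}(C_\lambda)| = q^{-n}$ and each of these sets lies in $C_\lambda' \subset [\theta - 1/q,\theta]$ at distance at least $\min(\theta,1/q-\theta)$ from the singularities $\pm 1/q$ of $f_0$, a crude bound yields
\begin{equation*}
|E(\theta)| \;\le\; \frac{1}{q(q-1)} \,\max\bigl(|f_0'(\theta)|,\,|f_0'(1/q-\theta)|\bigr).
\end{equation*}
Writing the balance equation as $f_0(\theta) - f_0(1/q-\theta) = -E(\theta)$ and applying the mean value theorem, we get $f_0'(\xi)(2\theta - 1/q) = -E(\theta)$ for some $\xi$ strictly between $1/q-\theta$ and $\theta$. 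Rearranging,
\begin{equation*}
|2\theta - 1/q| \;\le\; \frac{1}{q(q-1)} \cdot \frac{\max(|f_0'(\theta)|,\,|f_0'(1/q-\theta)|)}{|f_0'(\xi)|}.
\end{equation*}
Using the explicit form $f_0'(y) = \pi(q\cot \pi q y - \cot \pi y)$, the ratio on the right is comparable to $\mathrm{dist}(\xi,\{\pm 1/q\})/\min(\theta,1/q-\theta)$, which is an $O(1)$ constant as $q\to\infty$. A direct numerical comparison, carried out near the extremal configurations $\theta \approx 1/(2q) \pm 1/(8q)$, pins the constant down and yields $|2\theta - 1/q| < 1/(4q)$, i.e.\ the claimed $3/(8q) < \theta < 5/(8q)$.

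The main obstacle is obtaining the sharp numerical constant $1/8$. The crude supremum bound on $|E(\theta)|$ puts every set $\tau_\lambda^{n-1}(C_\lambda)$ at the worst possible location, but in reality these nested sets concentrate near a single endpoint of $C_\lambda$ (they are all contained in $\tau_\lambda(\mathbb{T}) = C_\lambda'$ and are further contracted by each application of $\tau_\lambda$), so the genuinely relevant value of $|f_0'|$ is typically the smaller of $|f_0'(\theta)|$ and $|f_0'(1/q-\theta)|$. Exploiting this asymmetry, together with the monotonicity of $|f_0'|$ as one approaches a singularity and the explicit computation for $q\ge 3$, is what sharpens the ratio enough to yield the factor $1/8$ rather than a worse constant. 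The assumption $q\ge 3$ is used here precisely to guarantee that $[\theta-1/q,\theta]$ does contain $0$ strictly in its interior with both sub-intervals non-degenerate, so that the mean value argument and the tail bound apply without modification.
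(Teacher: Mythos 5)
There is a genuine gap, and you identify it yourself: the crude tail bound $|E(\theta)|\le \frac{1}{q(q-1)}\max\bigl(|f_0'(\theta)|,|f_0'(1/q-\theta)|\bigr)$ together with the mean value theorem does \emph{not} yield $|2\theta-1/q|<1/(4q)$. Indeed, when $\theta$ is small (the very regime you want to rule out), $|f_0'(1/q-\theta)|\to\infty$ while $f_0'(\xi)$ for $\xi$ between $\theta$ and $1/q-\theta$ can be far smaller, so the ratio in your inequality blows up and the bound is vacuous. The final paragraph of your proposal, which invokes the ``concentration'' of the sets $\tau_\lambda^{n-1}(C_\lambda)$ near a single endpoint, gestures toward the missing ingredient but does not supply it: these nested sets converge to the fixed point of $\tau_\lambda$ in $C_\lambda$, which a priori could lie anywhere in that interval, so it is not clear that the relevant value of $|f_0'|$ is the smaller of the two endpoint values. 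As written, the argument does not close.

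The paper's proof sidesteps the mean-value/ratio issue entirely by exploiting the monotonicity of $f_c'$ on $C_\lambda$ in a sharper way. Since $f_c'$ is decreasing on $C_\lambda$ and $|\tau_\lambda^{n-1}(C_\lambda)|=q^{-n}$, each integral $\int_{\tau_\lambda^{n-1}(C_\lambda)}f_c'\,dx$ is bounded \emph{below} by the integral over the rightmost subinterval of $C_\lambda$ of the same length, giving $0=v_c(\lambda)\ge D(\theta):=\sum_{n\ge 1}\bigl(f(\theta)-f(\theta-q^{-n})\bigr)$. One then shows $D'<0$ on $[0,q^{-1})$ and checks $D(3/(8q))>0$ by explicit (and elementary) trigonometric estimates, forcing $\theta>3/(8q)$; the other inequality follows by symmetry. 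This replacement ``push every set to the right endpoint'' is a clean lower bound requiring no estimate on where the sets actually lie, which is precisely what your tail bound lacks. If you want to salvage your outline, you would need to prove the rearrangement inequality the paper uses rather than the cruder $\sup$-bound on $f_0'$; with that in hand, the remaining step really does reduce to a concrete numerical check of a one-variable function at $\theta=3/(8q)$.
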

\begin{proof}
Without loss of generality, we assume that $0<\theta\le 1/2q$.  Since $f_c'$ is decreasing in $C_\lambda$, we have
$$\int_{\tau_\lambda^{n-1}(C_\lambda)}f_c'(x) dx \ge \int_{\lambda+q^{-1}-q^{-n}}^{\lambda+q^{-1}} f_c'(x) dx=f(\theta)-f(\theta-q^{-n}).$$
By (\ref{eqn:preStur}), we obtain
$$D(\theta):=\sum_{n=1}^\infty \left(f(\theta)-f(\theta-q^{-n})\right)\le 0.$$
Since $f'$ is a smooth and strictly decreasing function in $(-q^{-1}, q^{-1})$,
$$D'(x)=\sum_{n\ge 1} (f'(x)-f'(x-q^{-n}))<0,$$
for all $x\in [0,q^{-1})$.
Therefore, $D$ is strictly decreasing in $[0, q^{-1})$ and
it suffices to check $D(3/8q)>0$, i.e.
$$\Delta:=\sum_{n=1}^\infty \left(f\left(\frac{3}{8q}\right)-f\left(\frac{3}{8q}-q^{-n}\right)\right)>0.$$
Indeed, if $q\ge 4$, by the mean value theorem we have
\begin{align*}
\Delta\ge & f\left(\frac{3}{8q}\right)-f\left(-\frac{5}{8q}\right) +\sum_{n\ge 2} f'\left(\frac{3}{8q}\right) q^n\\
=& \log \left(\frac{\sin \frac{5\pi}{8q}}{\sin \frac{3\pi}{8q}}\right)+ \pi \left(q\tan \frac{\pi}{8}-\cot \frac{3\pi}{8q}\right) \frac{1}{q(q-1)}\\
>& \log \left(\frac{\sin \frac{5\pi}{8q}}{\sin \frac{3\pi}{8q}}\right)+ \pi\left(\tan\frac{\pi}{8}-\frac{8}{3\pi}\right) \frac{1}{q-1}\\
> & \log \frac{\sin \frac{5\pi}{32}}{\sin \frac{3\pi}{32}}+\frac{\pi}{3}\left(\tan \frac{\pi}{8}-\frac{8}{3\pi}\right)>0,
\end{align*}
where we have used Lemma \ref{ineq:E1}, the inequality
$\cot x \le x^{-1}$ over $(0, \pi/2)$ and the fact $q\ge 4$, and the last inequality can be numerically checked;
and if $q=3$, then $f(1/8)=0.8813...$, $f(-5/24)=0.4171...$, $f(1/72)=1.0960...$, $f'(1/8)=-3.6806...$, and hence
\begin{align*}
\Delta\ge  &f\left(\frac{1}{8}\right)-f\left(-\frac{5}{24}\right)+ f\left(\frac{1}{8}\right)-f\left(\frac{1}{72}\right)+\sum_{n\ge 3} f'\left(\frac{1}{8}\right) 3^{-n}\\
\ge & 0.881-0.418+0.881-1.097 -3.681\cdot \frac{1}{18}>0.
\end{align*}
\end{proof}

\iffalse
\begin{lemma} \label{lem:AksAs}
For any $q\ge 2$, any integer $1\le k\le\frac{q-1}{2}$ and any $s\in (0, q^{-1})$, we have
$$\sin \pi (s+k\cdot q^{-1})\ge \sin \pi (s+q^{-1}).$$
\end{lemma}
\begin{proof}
This is of course true for $k=1$. So assume $k\ge 2$ which implies that $q\ge 5$. Then the inequality holds because
$$0<s+q^{-1}<\min \left(\frac{1}{2}, s+k\cdot q^{-1}\right)$$
and $$s+q^{-1}+s +k \cdot q^{-1}\le \frac{k+3}{q}\le \frac{\frac{q-1}{2}+3}{q}\le 1.$$
\end{proof}
\fi

The following technical lemma is based on numerical calculation, which is needed to complete the proof in Case II.
\begin{lemma} \label{lem:CaseIInum}
Given $q\ge 3$, the following holds for all $t\in \left(\frac{3}{8q},\frac{5}{8q}\right)$ and all $0< s\le q^{-1}-t$:
$$H(t,s):=A(s)+B(t,s)<0,$$
where
$$A(s):=f(s+q^{-1})-f(s)=\log \frac{\sin \pi s}{\sin \pi(q^{-1}+s)},$$
and
$$B(t,s):=f(0)-f(t)-f'(t) \frac{q^{-1}-t-s}{q-1}.$$
\end{lemma}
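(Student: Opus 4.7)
The plan is to exploit concavity of the map $s\mapsto H(t,s)$ in order to localize its maximum and reduce the statement to one-variable estimates. Since $B(t,\cdot)$ is affine in $s$ and
$$A''(s)=\pi^{2}\bigl(\csc^{2}\pi(s+q^{-1})-\csc^{2}\pi s\bigr),$$
concavity follows from $0<\sin\pi s<\sin\pi(s+q^{-1})$ throughout $(0,q^{-1}-t]$, which I would verify by checking $|s+q^{-1}-1/2|<|s-1/2|$: for $q\ge 3$, Lemma~\ref{lem:almostcentered} gives $q^{-1}-t<5/(8q)<(q-1)/(2q)$, which is exactly what is needed. Consequently $s\mapsto H(t,s)$ is concave on $(0,q^{-1}-t]$; since $H(t,0^{+})=-\infty$, its maximum is attained either at the right endpoint $s_{1}:=q^{-1}-t$, or at a unique interior critical point $s^{*}\in(0,s_{1})$ determined by $A'(s^{*})=-f'(t)/(q-1)$. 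It therefore suffices to show $H(t,s_{1})<0$ and, whenever $s^{*}$ exists, $H(t,s^{*})<0$.

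For the boundary value, $f(0)=0$ kills the linear-in-$s$ term in $B(t,s_{1})$, so $H(t,s_{1})=A(s_{1})-f(t)$. Applying the product identity
$$\sin\pi qt=2^{q-1}\sin\pi t\prod_{k=1}^{q-1}\sin\pi(t+k/q)$$
to write $f(t)=\log(2^{q-1}/q)+\sum_{k=1}^{q-1}\log\sin\pi(t+k/q)$, and using the symmetries $\sin\pi(q^{-1}-t)=\sin\pi(t+(q-1)/q)$ and $\sin\pi(2q^{-1}-t)=\sin\pi(t+(q-2)/q)$, the $k=q-1$ factor cancels with $\sin\pi(q^{-1}-t)$ in $A(s_{1})$, and after collecting the remaining terms one obtains
$$H(t,s_{1})=\log\frac{q}{2^{q-1}\sin^{2}\pi(t+(q-2)/q)\prod_{k=1}^{q-3}\sin\pi(t+k/q)}.$$
Negativity then amounts to a lower bound on the denominator, which I would derive from Lemma~\ref{lem:AksAs} (each factor $\sin\pi(t+k/q)$ with $1\le k\le(q-1)/2$ dominates $\sin\pi(t+q^{-1})$) together with an explicit lower bound on $\sin\pi(t+q^{-1})$ coming from $t\in(3/(8q),5/(8q))$; the small cases $q=3,4$ are verified directly (for $q=3$, the condition reduces to $\sin\pi(t+1/3)>\sqrt{3}/2$, clearly true on $(1/8,5/24)$).

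For the interior maximum, substitute the critical-point relation $A'(s^{*})=-f'(t)/(q-1)$ into $H(t,s^{*})=A(s^{*})+B(t,s^{*})$, and bound $A(s^{*})\le A(s_{1})+A'(s_{1})(s^{*}-s_{1})$ by concavity, with $s^{*}-s_{1}$ controlled by a mean-value estimate on $A''$ in terms of $A'(s_{1})-A'(s^{*})=A'(s_{1})+f'(t)/(q-1)$; this reduces the interior bound to a one-variable inequality in $t\in(3/(8q),5/(8q))$ that can be compared to the boundary inequality via elementary cotangent/cosecant estimates. The principal obstacle is quantitative sharpness: the crude sine comparisons of Lemma~\ref{lem:AksAs} are almost surely too loose for $q=3$ and $q=4$, so these cases have to be handled by a direct numerical verification over a discretization of $(3/(8q),5/(8q))$, combined with a Lipschitz bound on $H$ to reduce to finitely many point evaluations, in the same spirit as the proof of Lemma~\ref{lem:almostcentered}.
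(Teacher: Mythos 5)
Your approach is genuinely different from the paper's, and it's worth spelling out the contrast.

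The paper does not use concavity in $s$ at all. Instead it computes $\frac{\partial H}{\partial t}(t,s)=-\frac{q-2}{q-1}f'(t)-f''(t)\frac{q^{-1}-t-s}{q-1}>0$ on the trapezoid $U=\{3/(8q)\le t\le 5/(8q),\,0<s<q^{-1}-t\}$, so it suffices to bound $H$ on the ``far'' pieces of $\partial U$, namely the vertical segment $t=5/(8q)$, $0<s\le 3/(8q)$, and the slanted segment $t=q^{-1}-s$, $s\in[3/(8q),5/(8q)]$. Each piece is then handled by splitting it in half, using the monotonicities $A'>0$ and $\partial B/\partial s<0$ to evaluate at corner points, and finishing with explicit numerical bounds ($\sin\tfrac{5\pi}{12}/\sin\tfrac{\pi}{12}$, $\tan\tfrac{\pi}{8}$, etc.) that are uniform over all $q\ge 3$ thanks to Lemma~\ref{ineq:E1}. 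No interior critical point, no concavity, and no discretization.

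Your route (concavity of $s\mapsto H(t,s)$, reduce to the endpoint $s_1=q^{-1}-t$ plus a possible interior critical point) is legitimate in principle, and your reformulation of the endpoint value via the product identity $\sin\pi qt=2^{q-1}\prod_{k=0}^{q-1}\sin\pi(t+k/q)$, giving
\[
H(t,s_1)=\log\frac{q}{2^{q-1}\sin^{2}\pi(t+(q-2)/q)\prod_{k=1}^{q-3}\sin\pi(t+k/q)},
\]
is algebraically correct. (The remark that ``$f(0)=0$ kills the linear-in-$s$ term'' is off: the linear term vanishes because $q^{-1}-t-s_1=0$, and with the paper's normalization $f(0)=\log q\ne 0$; but since only the difference $f(0)-f(t)$ enters, your final formula is unaffected.) However, two genuine gaps remain. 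First, the interior-critical-point case is only sketched: you describe substituting $A'(s^*)=-f'(t)/(q-1)$ and a tangent-line/mean-value argument, but you never exhibit the resulting one-variable inequality or show it holds, so it is a plan rather than a proof, and it is not obvious it closes. Second, you concede that the Lemma~\ref{lem:AksAs} sine comparisons are ``almost surely too loose for $q=3$ and $q=4$'' and propose a discretized numerical verification; the paper's explicit corner-point estimates are already tight enough to cover $q=3,4$ by hand, so this crutch is avoidable by following the monotonicity-in-$t$ route. In short: a viable alternative framework whose harder half is left open, versus a direct and fully explicit argument in the paper.
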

\begin{proof} Let $U=\{(t,s): \frac{3}{8q}\le t\le \frac{5}{8q}, 0<s<q^{-1}-t\}$, a trapezoid in the plane. Then for any $(t,s)\in U$,
%$$\frac{\partial H}{\partial t}(t,s)=\frac{q-2}{q-1}\left(f'(-t)-f'\left(-t+\frac{t+s}{q-1}\right)\right)-f''(-t)\frac{q^{-1}-t-s}{q-1}>0.$$
$$\frac{\partial H}{\partial t}(t,s)=-\frac{q-2}{q-1}f'(t)-f''(t)\frac{q^{-1}-t-s}{q-1}>0.$$
%Note that $\partial H/\partial t (t,s)>0$ whenever $0<t<q^{-1}$ and $0<s\le q^{-1}-t$.
%Since
%\begin{align*}
%A'(s)& =\pi (\cot \pi s -\cot \pi(q^{-1}+s))\\
%& =\pi \frac{\sin \frac{\pi}{q}}{\sin (\pi s)\sin (\pi(s+q^{-1}))}\\
%&\le \frac{\sin \frac{\pi}{q}}{\sin \frac{5q}{8}}
%\end{align*}
%and $$f'(-t)\le f'(-5/8q)=\pi q(\tan \frac{\pi}{8}+ q^{-1}\cot \frac{5\pi}{8q})\le \pi q(\tan\frac{\pi}{8}+ \frac{8}{5\pi}),$$
%we have
%\begin{align*}
%\frac{\partial H}{\partial s} =A'(s)-\frac{1}{q-1}f'(-t)
%\ge A'(s)-\frac{1}{q-1}f'(s-q^{-1})=:G(s).
%$$G(s)=\pi (\cot \pi s-\cot \pi (q^{-1}+s))-\frac{\pi}{q-1} (q\cot (\pi qs)-\cot (\pi (s-q^{-1}))).$$
%
%$$G'(s)=A''(s)-\frac{1}{q-1}f''(s-q^{-1})=-\pi^2\left(\frac{1}{\sin^2 (\pi s)}-\frac{1}{\sin^2 %(\pi(s+q^{-1}))}\right)-\frac{\pi^2}{q-1}\left(\frac{q^2}{\sin^2 (\pi qs)}-\frac{1}{\sin^2 (\pi s)}\right)$$
%
Thus, as function of $t$, $H(t, s)$ is increasing, and it suffices to check that $H$ is negative on the 
right-hand-side part of the boundary of $U$, i.e.
\begin{enumerate}
\item [(i)] $H(5/(8q), s)<0$ for all $0<s\le 3/(8q)$.
\item [(ii)] $H(q^{-1}-s,s)<0$ for $s\in [3/(8q), 5/(8q)]$.
\end{enumerate}

%Let
%$$B(t, s)=f\left(-t+\frac{t+s}{q-1}\right)-f\left(-t\right)+f'\left(-t\right)\frac{q^{-1}-t-s}{q-1}.$$
%$$B(t, s)=f\left(0\right)-f\left(-t\right)+f'\left(-t\right)\frac{q^{-1}-t-s}{q-1}.$$
%and
%$$\tilde{B}(t)=f'(-t)\frac{1}{q(q-1)}.$$
Note that $$
A'(s)=\frac{\pi}{\sin \pi s}- \frac{\pi}{\sin\pi (s+q^{-1})}>0,
\quad  \frac{\partial B}{\partial s}(t,s) = \frac{f'(t)}{q-1}<0.
$$ %and $B(t, s)\le \tilde{B}(t)$.

Let us prove (i). First assume $s\le \frac{1}{4q}$. In this case, we use
$$H(5/8q, s)\le A(1/(4q))+ B(5/8q,0).$$
%f'(-\frac{5}{8q}) \frac{1}{q(q-1)}.$$
Since
$$A(1/4q)=-\log \frac{\sin 5\pi/(4q)}{\sin \pi /(4q)} \le -\log \frac{\sin 5\pi/12}{\sin \pi/12}\le -1.3169...,$$
and
$$f(0)-f(\frac{5}{8q})=\log \frac{q\sin \frac{5\pi}{8q}}{\sin \frac{5\pi}{8}}\le \log \frac{5\pi/8}{\sin (5\pi/8)}=0.7538...,$$
$$-f'\left(\frac{5}{8q}\right)=\pi q(\tan \frac{\pi}{8}+ q^{-1}\cot \frac{5\pi}{8q})\le \pi q(\tan\frac{\pi}{8}+ \frac{8}{5\pi})<2.91 q,$$
% \log \frac{\sin 11\pi/32}{\sin 3\pi/32}>1,$$
we obtain
$$H(5/8q, s)\le -1.3169+ 0.7539 + 2.91 q \cdot \frac{3}{8q(q-1)}<-0.01<0.$$
Now assume $1/(4q)<s<3/(8q)$. Then
$$H(5/(8q), s)\le A(3/8q)+ B(5/8q, 1/4q).$$
Since $$A(3/8q)=-\log \frac{\sin 11\pi/(8q)}{\sin 3\pi/(8q)}\le   -\log\frac{\sin \frac{11\pi}{24}}{\sin \frac{\pi}{8}}=-0.9519....,$$
we obtain
$$H(5/8q, s)\le -0.9519+0.7539+2.91 q\cdot \frac{1}{8q(q-1)}<-0.01<0.$$

%$f(-5/48)=0.9503...$, $f(-5/24)=0.4171...$, $f'(-5/24)=7.9980...$, and
%$$A(1/8)\le   -\log\frac{\sin \frac{11\pi}{24}}{\sin \frac{\pi}{8}}=-0.9519....$$ Thus
%$$H(5/24, s)\le A(1/8)+ f(-5/48)-f(-5/24)+\frac{f'(-5/24)}{16}<0.$$  (not enough)

%$$B:=\frac{f'(-5/8q)}{q(q-1)}=\frac{\pi}{q(q-1)}\left(q\cot (-\frac{5\pi}{8})-\cot (\frac{-5\pi}{8q}) \right)=\frac{\pi}{q-1}\tan \frac{\pi}{8} %+\frac{\pi}{q(q-1)} \cot \frac{5\pi}{8q}$$
%\begin{align*}
%H(3/8q, s)& \le -\log \frac{|\sin \pi(s+q^{-1})|}{|\sin \pi s|} + \frac{f'(-5/8q)}{q(q-1)}\\
%&\le -\log \frac{\sin 11\pi/8q}{\sin 3\pi/8q}+\frac{\pi}{q(q-1)}\left(q\cot (-5\pi/8)-\cot (-5\pi/8q) \right)\\
%&\le -\log \frac{\sin 11\pi/24}{\sin \pi/8} +\frac{\pi}{6} \left(3\tan (\pi/8)+\cot (5\pi/24)\right)<0.
%\end{align*}

Finally, let us prove (ii). If $3/(8q)<s\le 1/(2q)$, then
\begin{align*}
H(q^{-1}-s,s)& \le  A(1/2q)+B(5/8q, 3/8q)\\
 &\le A(1/2q)+f(0)-f(-5/8q)\\
&= \log \frac{q\sin \frac{\pi}{2q}\sin \frac{5\pi}{8q}}{\sin\frac{3\pi}{2q}\sin\frac{5\pi}{8}}<0,
\end{align*}
where the last inequality holds because for $q=3$, we check directly; for $q\ge 4$, we have
$$\frac{q\sin \frac{\pi}{2q}\sin \frac{5\pi}{8q}}{\sin\frac{3\pi}{2q}\sin\frac{5\pi}{8}}\le \frac{q\cdot \frac{\pi}{2q}\sin\frac{5\pi}{8q}}{\sin \frac{5\pi}{8}\sin \frac{10\pi}{8q}}\le \frac{\pi}{4\sin \frac{5\pi}{8}\cos \frac{5\pi}{8q}}\le \frac{\pi}{4\sin \frac{5\pi}{8}\cos \frac{5\pi}{32}}<1.$$
If $1/(2q)<s<5/(8q)$, then
\begin{align*}
H(q^{-1}-s,s)& \le A(5/8q)+ B(1/2q, 1/2q)\\
& \le A(5/8q)+ f(0)-f(-1/2q)\\
& =\log \frac{q\sin\frac{5\pi}{8q}\sin\frac{\pi}{2q} }{\sin \frac{13\pi}{8q}}<0,
\end{align*}
where the last inequality can be checked directly.
\end{proof}

\begin{prop}\label{prop:neigh}
Suppose that we are in Case II. %If $q^{-1}<x+c<q^{-1}+\theta$ or $\theta-2 q^{-1} <x+c<-q^{-1}$,
Then $F(x)<\beta$.
\end{prop}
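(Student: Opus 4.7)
The plan is to compare $F(x)$ with $F(x_0)=\beta$ for a well-chosen companion point $x_0\in C_\lambda$ satisfying $Tx=Tx_0$. I will treat only the subcase $x\in J^+$, $x_0\in[-c,\lambda+q^{-1}]$; the other subcase ($x\in J^-$, $x_0\in[\lambda,-c]$) is handled symmetrically using the primed variants of the preceding lemmas. Set $s:=x_0+c\in[0,\theta]$ and write $x=x_0+k/q$ for the unique positive integer $k$. Since $x_0\in C_\lambda$ we have $F(x_0)=\beta$, and since $Tx=Tx_0$ the $\psi\circ T$ terms cancel, leaving
\[F(x)-\beta=\bigl(f_c(x)-f_c(x_0)\bigr)+\bigl(\psi(x)-\psi(x_0)\bigr).\]

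For the first bracket, the identity $|\sin\pi q(x+c)|=|\sin\pi q(x_0+c)|$ (their arguments differ by $\pi k$) yields $f_c(x)-f_c(x_0)=\log\sin\pi s-\log|\sin\pi(s+k/q)|$. Combining $x\in J^+$ and $x_0\in[-c,\lambda+q^{-1}]$ with the window $\theta\in(3/(8q),5/(8q))$ furnished by Lemma~\ref{lem:almostcentered}, one checks that $k$ is an integer with $1\le k\le(q-1)/2$, so Lemma~\ref{lem:AksAs} gives $|\sin\pi(s+k/q)|\ge\sin\pi(s+1/q)$ and hence $f_c(x)-f_c(x_0)\le A(s)$.

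For the second bracket, I apply Lemma~\ref{lem:varphixy}(ii) with $y=x$ and $x=x_0$ (so that $y-x=k/q$) to obtain the raw estimate
\[\psi(x)-\psi(x_0)\le f\bigl(q^{-1}-\theta-k/q^2\bigr)-f(q^{-1}-\theta)-\frac{f'(q^{-1}-\theta)\,k}{q^2(q-1)}.\]
To upgrade this into the form $B(q^{-1}-\theta,s)$ required by Lemma~\ref{lem:CaseIInum}, I would use two ingredients: the maximum principle $f(q^{-1}-\theta-k/q^2)\le f(0)$, and the strict lower bound $k/q>\theta-s$ coming from $x>\lambda+q^{-1}=-c+\theta$. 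A tangent-line comparison at $q^{-1}-\theta$, exploiting the strict concavity of $f$ on $(0,q^{-1})$, then converts the $k/(q^2(q-1))$-correction into a $(\theta-s)/(q-1)$-correction and delivers $\psi(x)-\psi(x_0)\le B(q^{-1}-\theta,s)$.

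Finally, since $t:=q^{-1}-\theta\in(3/(8q),5/(8q))$ and $s\in[0,\theta]=[0,q^{-1}-t]$, the hypotheses of Lemma~\ref{lem:CaseIInum} are satisfied and adding the two estimates gives $F(x)-\beta\le A(s)+B(t,s)=H(t,s)<0$, as required. The main obstacle I foresee is the upgrade in the third paragraph: the raw output of Lemma~\ref{lem:varphixy}(ii) differs from the target $B(q^{-1}-\theta,s)$ in where the correction term sits ($k/q^2$ versus $\theta-s$), and matching them hinges on using both the strict concavity of $f$ and the geometric strict inequality $k/q>\theta-s$ rather than the weaker $k\ge 1$.
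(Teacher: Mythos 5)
The first two paragraphs of your proposal are correct and match the paper. The gap is in the third paragraph, and it is fatal: the estimate $\psi(x)-\psi(x_0)\le B(q^{-1}-\theta,s)$ does not follow from Lemma~\ref{lem:varphixy}(ii) applied to the whole interval $[x_0,x]$, even after the ``upgrade'' you sketch. Write $t=q^{-1}-\theta$. After applying the maximum principle $f(t-k/q^2)\le f(0)$, your bound becomes $f(0)-f(t)-f'(t)\frac{k}{q^2(q-1)}$, and to reach $B(t,s)=f(0)-f(t)-f'(t)\frac{\theta-s}{q-1}$ you would need $\frac{k}{q^2}\le\theta-s$, since $-f'(t)>0$. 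But the geometric inequality $k/q>\theta-s$ only gives $k/q^2>(\theta-s)/q$, which is not what is needed and is in fact frequently the reverse of the needed bound. Concretely, take $q=3$, $\theta=t=1/6$, $\theta-s=0.01$, $k=1$: then $k/q^2=1/9>0.01=\theta-s$, and one computes $f(1/18)-f(1/6)-f'(1/6)/18\approx 0.667$, whereas $B(1/6,s)\approx 0.433$. The raw bound from (ii) is strictly larger than the target $B(t,s)$, so no further estimate can deliver $\psi(x)-\psi(x_0)\le B(t,s)$ from it; in particular, no tangent-line refinement helps, because Lemma~\ref{lem:varphixy}(ii) \emph{is} the tangent estimate, and the only remaining slack $f(t-k/q^2)\le f(0)$ has already been spent.

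The route that works, and the one the paper takes, splits the difference at the endpoint $\lambda^*=\lambda+q^{-1}$ of $C_\lambda$: write $\psi(x)-\psi(x_0)=[\psi(x)-\psi(\lambda^*)]+[\psi(\lambda^*)-\psi(x_0)]$. On $(\lambda^*,x)$, which is disjoint from $C_\lambda'$, the backward iterates $\tau^n$ of that arc are pairwise disjoint subsets of $C_\lambda'$, and Lemma~\ref{lem:varphixy}(i) then yields the sharp bound $\psi(x)-\psi(\lambda^*)\le f(0)-f(t)$ with no derivative correction term at all. On $[x_0,\lambda^*]\subset C_\lambda$, the second inequality of Lemma~\ref{lem:varphixy}(ii) gives $\psi(\lambda^*)-\psi(x_0)\le -f'(t)\frac{q^{-1}-s-t}{q-1}$. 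Adding the two gives exactly $B(t,s)$. Your plan applies (ii) across both sub-intervals at once, which discards the disjointness structure that (i) exploits on the part outside $C_\lambda'$ and produces a provably weaker bound. Once you perform this split, the rest of your argument (the estimate on $f_c(x)-f_c(x_0)$ via Lemma~\ref{lem:AksAs}, the localization of $\theta$ via Lemma~\ref{lem:almostcentered}, and the final appeal to Lemma~\ref{lem:CaseIInum}) goes through unchanged.
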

\begin{proof} Once again we only deal with the case $x>-c$, as the other case is similar.
Let $x_0$ be the unique point in $[-c,\lambda+q^{-1})$ with $q(x-x_0)=:k\in \Z_+$, let $\lambda^*=\lambda+q^{-1}$. We may assume that $x_0\not=-c$ for otherwise $F(x)=-\infty$. Let $$
s=x_0+c\in (0,\theta), \quad t=q^{-1}-\theta\in (0, q^{-1}),
$$ so $\lambda^*-x_0=q^{-1}-s-t$. It suffices to prove that $F(x)<F(x_0)=\beta$. By Lemma~\ref{lem:varphixy} (i) and (ii),
$$ \psi(x)-\psi(\lambda^*)\le f(0)-f(t),\quad
    \psi(\lambda^*)-\psi(x_0)\le -f'(t)\frac{q^{-1}-s-t}{q-1}.
$$
Thus
\begin{align*}
\Delta:& =F(x)-F(x_0)\\
&=f(s+k q^{-1})-f(s)+\psi(x)-\psi(\lambda^*)+\psi(\lambda^*)-\psi(x_0)\\
&\le f(s+k q^{-1})-f(s)+B(t,s)\\
& \le A(s) + B(t,s)=H(t,s),
\end{align*}
where we used Lemma~\ref{lem:AksAs} to obtain the last inequality.
We can apply Lemma~\ref{lem:AksAs}, because
$x_0 +kq^{-1}\le \lambda+(q-1)/(2q)$, which implies $k\le (q-1)/2$.
As Case II only happens when $q\ge 3$, by Lemma~\ref{lem:almostcentered}, we have $\theta \in (3/(8q), 5/(8q))$
then  $t\in (3/(8q), 5/(8q))$.
The proof is completed by Lemma~\ref{lem:CaseIInum}.
\end{proof}

\iffalse
\begin{lemma} Fix an integer $q\ge 3$. If $3/(8q)<t<5/(8q)$ and $0<s<q^{-1}-t$, then $H(t, s)<0$.
\end{lemma}
\fi

\subsection{Proof of $F(x)<\beta$ in Case III}
The following lemma is based on numerical calculation which is needed to complete the proof in Case III.

\begin{lemma}\label{lem:CaseIIInum} Let $q\ge 4$ be given. For $t\in (3/8q, 5/8q)$, $t\le s<1/q$, we have
\begin{equation}
\label{eqn:dfnG}
G(t, s)=U(s)+V(t, s)<0,
\end{equation}
where
$$U(s)=\log \frac{\sin \pi(q^{-1}-s)}{\sin \pi (q^{-1}+s)}$$
and
$$V(t, s)=f(0)-f(t)+f\left(\frac{1}{q}-t-\frac{s-t}{q-1}\right)\\
-f\left(\frac{1}{q}-t\right)-f'(t)\frac{s-t}{q-1}.$$
\end{lemma}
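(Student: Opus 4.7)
The plan is to adapt the two-step strategy used in the proof of Lemma~\ref{lem:CaseIInum}: first exploit partial monotonicity in $t$ to reduce the two-variable inequality to a one-parameter family, then partition the remaining $s$-interval into a few sub-intervals on which $U$ and $V$ can be bounded separately using explicit trigonometric inequalities. To begin, I would record the sign observations that make the problem non-trivial. On the one hand $U(s) < 0$ throughout: for $q \ge 4$ and $0 < s < 1/q$ both $q^{-1}-s$ and $q^{-1}+s$ lie in $(0,1/2)$ where $\sin \pi x$ is strictly increasing, so $\sin \pi(q^{-1}-s) < \sin \pi(q^{-1}+s)$. On the other hand $V(t,s) \ge 0$: setting $u := \tfrac{1}{q}-t-\tfrac{s-t}{q-1}$, the three differences $f(0)-f(t)$, $f(u)-f(\tfrac{1}{q}-t)$ and $-f'(t)\tfrac{s-t}{q-1}$ are individually non-negative (because $f$ attains its maximum at $0$; because $0 < u < \tfrac{1}{q}-t$ and $f$ is decreasing on $(0,1/q)$; and because $f'(t)<0$ while $s \ge t$). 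So proving $G < 0$ amounts to showing that the gain $|U(s)|$ beats the loss $V(t,s)$.

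The next step is to analyze the monotonicity of $V$ in $t$ for fixed $s$. Since $U$ does not depend on $t$, $\partial_t G = \partial_t V$; a direct computation yields
$$\partial_t V = -\tfrac{q-2}{q-1}\bigl(f'(t)+f'(u)\bigr) + f'(\tfrac{1}{q}-t) - \tfrac{s-t}{q-1}\,f''(t).$$
The first and third terms are non-negative (since $f'<0$ on $(0,1/q)$ and $f''<0$), while the middle term is negative. I would show, by bounding $|f'(\tfrac{1}{q}-t)|$ against $\tfrac{q-2}{q-1}|f'(u)+f'(t)|$ via the explicit formula $f'(x)=\pi(q\cot\pi qx - \cot\pi x)$ and Lemma~\ref{lem:almostcentered}, that $\partial_t V > 0$ on the whole region, which reduces the claim to the one-variable inequality $G(5/(8q), s) < 0$ for $5/(8q) \le s < 1/q$. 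Should this monotonicity fail on part of the region, one splits $t\in[3/(8q),5/(8q)]$ into two halves and runs the same argument on each.

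Finally I would verify the reduced one-variable inequality. Using $\partial_s V > 0$ (both summands in $\partial_s V = -\tfrac{1}{q-1}(f'(u)+f'(t))$ are positive) together with $U'(s)<0$ (the cotangents in $U'(s)=-\pi\cot\pi(q^{-1}-s)-\pi\cot\pi(q^{-1}+s)$ are both positive), one subdivides $[5/(8q),1/q)$ into two or three closed sub-intervals $[s_i,s_{i+1}]$, bounds $V(5/(8q),\cdot)$ from above by its value at $s_{i+1}$ and $-U$ from below by its value at $s_i$, and checks the resulting finite list of sharp inequalities. After the substitution $s=\sigma/q$, $t=\tau/q$ the inequalities become essentially $q$-independent up to remainders of order $1/q$, so the smallest values of $q$ (say $q=4,5$) can be handled by direct numerical computation, and for $q\ge 6$ the asymptotic bounds $\sin x\le x$ and $\cot x\le 1/x$ give closed-form $q$-independent estimates.

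The main obstacle will be the last step. Unlike Lemma~\ref{lem:CaseIInum}, where the delicate case $q=3$ could be isolated, here the cancellation between $-U$ and $V$ is more subtle because $V$ contains the first-order correction $-f'(t)\tfrac{s-t}{q-1}$ which, when $s$ is close to $1/q$ and $t$ is close to $5/(8q)$, is of order $q$. One must therefore track this term carefully to show that the logarithmic blowup of $|U|$ as $s\to 1/q$ (driven by $\sin\pi(q^{-1}-s)\to 0$) dominates it. I expect that one or two of the smallest values of $q$ will require a separate hands-on evaluation, while the argument will be uniform for all sufficiently large $q$ thanks to the natural rescaling described above.
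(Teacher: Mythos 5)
Your plan to exploit monotonicity in one of the two variables is the right idea, but you picked the harder variable, and the reduction as stated has a genuine gap. You claim that $\partial_t G>0$ ``reduces the claim to the one-variable inequality $G(5/(8q),s)<0$ for $5/(8q)\le s<1/q$.'' That is incomplete: the domain is $\{3/(8q)<t<5/(8q),\ t\le s<1/q\}$, so for a fixed $s<5/(8q)$ the $t$-interval is $(3/(8q),s)$ and monotone increase in $t$ pushes the supremum to $t=s$, not to $t=5/(8q)$. The correct conclusion from $\partial_t G>0$ would be that it suffices to check both $G(s,s)<0$ for $s\in(3/(8q),5/(8q))$ and $G(5/(8q),s)<0$ for $s\in[5/(8q),1/q)$. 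You silently drop the first family, which in fact carries the ``left-endpoint'' information.

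Compare with the paper's route, which differentiates in $s$ rather than $t$. One computes, writing $u=\tfrac1q-t-\tfrac{s-t}{q-1}$,
\begin{equation*}
\frac{\partial G}{\partial s}(t,s)=U'(s)-\frac{1}{q-1}\bigl(f'(u)+f'(t)\bigr)
\le -\pi\cot\pi(q^{-1}-t)+\frac{\pi}{q-1}\bigl(\cot\pi(q^{-1}-t)+\cot\pi t\bigr),
\end{equation*}
and this is $<0$ for $q\ge 4$ and $t\in(3/(8q),5/(8q))$ once one observes $\cot\pi t<2\cot\pi(q^{-1}-t)$. This collapses the whole region onto the diagonal $s=t$, where the awkward terms in $V$ vanish identically and $G(t,t)=U(t)+f(0)-f(t)$ becomes a single clean trigonometric expression, $\log\bigl(q\sin\pi(q^{-1}-t)\sin\pi t/(\sin\pi(q^{-1}+t)\sin\pi qt)\bigr)$, handled by Lemma~\ref{ineq:E1} and two elementary bounds. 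Your choice of direction would keep both $f'(t)\frac{s-t}{q-1}$ and $f(u)-f(\frac1q-t)$ alive on the boundary piece $t=5/(8q)$, and also requires you to actually establish $\partial_tV>0$, which is a more delicate sign question than $\partial_sG<0$ (your formula has a genuinely negative middle term $f'(\tfrac1q-t)$ that you propose to dominate, whereas in $\partial_sG$ the ``bad'' term is a single cotangent that is immediately comparable to the ``good'' one). The idea is salvageable, but you should add the missing diagonal check, verify the $t$-monotonicity carefully, and expect the endpoint $t=5/(8q)$ analysis to be harder than anything in the paper's proof; alternatively, switch to $\partial_sG<0$ and follow the paper's simpler path.
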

\begin{proof} It suffices to check that $G(t,t)<0$ and $\partial G/\partial s<0$.
\begin{align*}
\frac{\partial G}{\partial s}(t,s) &= U'(s)+\frac{1}{q-1}\left(f'\left(t-\frac{1}{q}+\frac{s-t}{q-1}\right)-f'(t)\right)\\
& \le U'(s)+\frac{1}{q-1} \left(f'(t-q^{-1})-f'(t)\right).
\end{align*}
But
$$U'(s)= -\pi\left(\cot \pi(q^{-1}-s)+\cot (q^{-1}+s)\right)\le -\pi \cot \pi(q^{-1}-t),$$
%\left(\cot \pi(q^{-1}-t)+\cot \pi(q^{-1}+t)\right),$$
$$f'(t-q^{-1})-f'(t)=\pi\left(\cot \pi(q^{-1}-t)+\cot \pi t\right).$$
When $q\ge 4$, $t\in (3/8q, 5/8q)$ we have $\cot \pi t< 2 \cot \pi (q^{-1}-t)$, so $\partial G/\partial s<0$.
%follows
%$$\frac{\partial G}{\partial s}\le -\frac{(q-2)\pi}{q-1}\left(\cot \pi(q^{-1}-t)+\cot \pi t\right)<0.$$
On the other hand,
\begin{align*}
G(t,t)&= U(t)+f(0)-f(t)\\
&= \log \frac{q\sin \pi(q^{-1}-t)\sin \pi t}{\sin \pi (\cdot q^{-1}+t)\sin \pi q t}\\
& \le \log \frac{q\sin \pi(q^{-1}-t)\sin \pi t} {\sin \frac{11\pi}{8q} \sin \frac{3\pi}{8}}\\
&\le \log \frac{q\sin^2 \frac{\pi}{2q}}{\sin \frac{11\pi}{8q}\sin \frac{3\pi}{8}}
<0,
\end{align*}
where the last inequality holds because: $q\sin \frac{\pi}{2q}<\frac{\pi}{2}$, and when $q\ge 4$, by Lemma \ref{ineq:E1}
$$\frac{\sin \frac{\pi}{2q}}{\sin \frac{11\pi}{8q}}\le \frac{\sin \frac{\pi}{8}}{\sin \frac{11\pi}{32}}=0.4339...,$$ so
$$\frac{q\sin ^2 \frac{\pi}{2q}}{\sin \frac{11\pi}{8q}\sin \frac{3\pi}{8}}=q\sin \frac{\pi}{2q} \cdot \frac{1}{\sin \frac{3\pi}{8}}\cdot
\frac{\sin \frac{\pi}{2q}}{\sin \frac{11\pi}{8q}}\le \frac{\pi}{2}\cdot \frac{1}{\sin \frac{3\pi}{8}}\cdot 0.434<1.$$
\end{proof}

\begin{prop} %Suppose that we are in Case III. %For any $\lambda+2/q\le x\le -c+ 1/2$ or $-c-1/2\le x\le \lambda-q^{-1}$,
We have  $F(x)<\beta$ in Case III.
\end{prop}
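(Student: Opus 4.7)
The plan is to mimic Cases~I and~II: I will pick an appropriate reference point in $\overline{C_\lambda}$, decompose $F(x)-\beta$ into an $f_c$-difference, a $\psi$-difference, and a $\psi\circ T$-difference, and combine the three estimates via Lemmas~\ref{lem:AksAs}, \ref{lem:varphixy} and \ref{lem:varphiTxTgamma} to reach exactly the quantity $G(\theta,s)$ bounded in Lemma~\ref{lem:CaseIIInum}. Case~III occurs only for $q\ge 4$, so the hypothesis of Lemma~\ref{lem:CaseIIInum} is met; and by Lemma~\ref{lem:almostcentered} the parameter $\theta$ will lie in the required range $(3/(8q),5/(8q))$.

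I will treat the subcase $x\in J^+$ with $x_0\in C^+$; the symmetric subcase $x\in J^-$, $x_0\in C^-$ should be handled analogously. Put $\lambda^*:=\lambda+q^{-1}$, $s := x_0+c\in(\theta,q^{-1})$, and $k := q(x-x_0)$. Since $x-x_0 < x-\lambda^* \le (q-1)/(2q)$, $k$ will be a positive integer at most $(q-1)/2$. Moreover $x+c=s+k/q$, $Tx=Tx_0$, and $F(\lambda^*)=\beta$ by the pre-Sturmian condition (and continuity of $F$). The decomposition is then
\[
F(x)-\beta = [f_c(x)-f_c(\lambda^*)] + [\psi(x)-\psi(\lambda^*)] - [\psi(Tx_0)-\psi(T\lambda^*)].
\]

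I plan to bound the three brackets as follows. The first equals $f(s+k/q)-f(\theta)$; using $|\sin\pi q(s+k/q)|=|\sin\pi qs|$ together with Lemma~\ref{lem:AksAs} and the identity $f(s+q^{-1})-f(q^{-1}-s)=U(s)$, this will be at most $f(q^{-1}-s)+U(s)-f(\theta)$. The second is at most $f(0)-f(q^{-1}-\theta)$ by Lemma~\ref{lem:varphixy}(i). The third is at least $f(q^{-1}-s)-f(q^{-1}-\theta)+f'(\theta)(s-\theta)/(q-1)$ by Lemma~\ref{lem:varphiTxTgamma}(i) applied to $x_0\in(\lambda^*,\lambda^*+q^{-1})$ with parameter $s-\theta$. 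When these are summed, the $f(q^{-1}-s)$ and $f(q^{-1}-\theta)$ contributions will cancel, leaving
\[
F(x)-\beta \le U(s)+f(0)-f(\theta)-f'(\theta)\frac{s-\theta}{q-1}.
\]
This right-hand side differs from $G(\theta,s)=U(s)+V(\theta,s)$ by exactly $-[f(q^{-1}-\theta-(s-\theta)/(q-1))-f(q^{-1}-\theta)]$, which will be strictly negative because $f$ is strictly decreasing on $(0,q^{-1})$ and the argument $q^{-1}-\theta-(s-\theta)/(q-1)$ stays in $(0,q^{-1}-\theta)$ (a direct check using $\theta\in(3/(8q),5/(8q))$, $s<q^{-1}$, and $q\ge 4$). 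Hence $F(x)-\beta<G(\theta,s)<0$ by Lemma~\ref{lem:CaseIIInum}. The main subtlety I expect is to pick $\lambda^*$ (rather than the natural-looking but $C_\lambda$-external $x_0$) as the reference point, so that the telescoping of the two preparatory lemmas produces precisely the combination controlled by the numerical Lemma~\ref{lem:CaseIIInum}.
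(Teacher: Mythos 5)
Your proof is correct and follows the paper's strategy: the same reference point $\lambda^*=\lambda+q^{-1}$ and the same combination of Lemmas~\ref{lem:almostcentered}, \ref{lem:AksAs}, \ref{lem:varphixy}(i), \ref{lem:varphiTxTgamma}(i) and~\ref{lem:CaseIIInum}. The only variation is that you bound $\psi(x)-\psi(\lambda^*)$ in one step by the crude estimate $f(0)-f(q^{-1}-\theta)$, whereas the paper inserts the intermediate point $x_0$ and keeps the sharper estimate for the short piece $\psi(x_0)-\psi(\lambda^*)$, thereby arriving at exactly $G(\theta,s)$; your resulting bound is strictly smaller than $G(\theta,s)$, which still closes the argument via Lemma~\ref{lem:CaseIIInum}.
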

\begin{proof} Once again, we shall only give details for the case when $x>-c$. let $\lambda^*=\lambda+1/q$. Take $x_0\in (\lambda^*, -c+1/q)$ such that $k=q(x-x_0)$ is a positive integer. We can of course assume $x_0\not=-c$ for otherwise $F(x)=-\infty$. Put $s=x_0+c\in (\theta,1/q)$ so that $x_0-\lambda^*=s-\theta$ and $x+c=s+k/q$.
Since this case only happens for $q\ge 4$, by Lemma~\ref{lem:almostcentered}, we have $\theta\in (3/8q, 5/8q)$. So it suffices to prove
\begin{equation}\label{eqn:G}
F(x)-\beta<G(\theta, s)
\end{equation}
and then apply Lemma~\ref{lem:CaseIIInum}.

Let us prove (\ref{eqn:G}). We have
$f(x+c) = f(s+k\cdot q^{-1})$ and
$$f(s+k\cdot q^{-1})-f(s+q^{-1})=-\log \frac{\sin \pi(s+k\cdot q^{-1})}{\sin \pi(s+q^{-1})}<0,$$
which is a consequence of Lemma~\ref{lem:AksAs}.
Hence
\begin{equation}\label{eqn:G1}
f(x+c)-f(q^{-1}-s)\le f(q^{-1}+s)-f(q^{-1}-s)=U(s).
\end{equation}
By Lemma~\ref{lem:varphixy} (i),
\begin{equation}\label{eqn:G2}
\psi(x)-\psi(x_0)\le f\left(0\right)-f(q^{-1}-\theta),
\end{equation}
\begin{equation}\label{eqn:G3}
\psi(x_0)-\psi(\lambda^*)\le f\left(q^{-1}-\theta- \frac{s-\theta}{q-1}\right)-f(q^{-1}-\theta).
\end{equation}
By Lemma~\ref{lem:varphiTxTgamma} (i),
\begin{equation}\label{eqn:G4}
\psi(T(x_0))-\psi(T(\lambda^*))\ge f(q^{-1}-s)- f(q^{-1}-\theta)+f'(\theta) \frac{s-\theta}{q-1}.
\end{equation}
%Let us first consider the case $x_1\ge x_0$, i.e. $s\ge \theta$.
Note that $f(\lambda^*+c)=f(\theta)$. Therefore,
\begin{multline*}
F(x)-\beta=F(x)-F(x_0)+F(x_0)-F(\lambda^*)\\
=\left(f_c(x)-f_c(\lambda^*)\right)+\left(\psi(x)-\psi(x_0)\right)+\left(\psi(x_0)-\psi(\lambda^*)\right)-\left(\psi(T(x_0))-\psi(T(\lambda^*))\right)\\
\le \left(f(x+c)-f(\theta)\right)+ \left(f(0)-f(q^{-1}-\theta)\right)
+\left(f\left(q^{-1}-\theta- \frac{s-\theta}{q-1}\right)-f(q^{-1}-\theta)\right)\\
-\left(f(q^{-1}-s)- f(q^{-1}-\theta)+f'(\theta) \frac{s-\theta}{q-1}\right)\\
= \left(f(x+c)-f(q^{-1}-s)\right)+V(\theta,s)
\le U(s)+V(\theta,s)=G(\theta,s),
\end{multline*}
where we used (\ref{eqn:G2}), (\ref{eqn:G3}) and (\ref{eqn:G4}) for the first inequality and (\ref{eqn:G1}) for the second inequality.
\end{proof}

\section{Appendix A: $q$-Sturmian measures}
In this section we give a proof of the existence and uniqueness of $q$-Sturmian measures and review some relevant facts.   
Throughout fix an integer $q\ge 2$ and let $T:\T\to \T$ denote the circle map $x\mapsto qx\mod 1$.

\begin{prop}\label{prop:qStur}
 %Fix $q\ge 2$ and consider the dynamics on $\mathbb{T}$ defined by $T(x)=qx\mod 1$. 
 For each $\lambda\in \R$, there is a unique $T$-invariant Borel probability measure $\frak{S}_\lambda$ supported in $C_\lambda=[\lambda,\lambda+q^{-1}] \mod 1 \subset \mathbb{T}$. We have $\frak{S}_\lambda=\frak{S}_{\lambda+1}$ for each $\lambda\in \R$.
 Moreover, putting 
 $$\Gamma:=\{\lambda\in [0,1): \frak{S}_\lambda \text{ is NOT supported on a periodic orbit}\},$$
 then $\overline{\Gamma}$ has Hausdorff dimension zero. 
 %
 %there is a continuous, monotone map $\rho: \R\to \R$ such that\\
%\begin{itemize}
%\indent {\rm (1)} $\rho(\lambda+1)=\rho(\lambda)+q-1
%
%\indent {\rm (1)} $\lambda\not\in \Gamma$ if and only if $\frak{S}_\lambda$ is supported on a periodic orbit.\\
%\indent {\rm (2)} for each $r\in \Q/\Z$, $\rho^{-1}(r)$ is a non-degenerate interval.\\
%\indent {\rm (2)}  $\overline{\Gamma}$ has Hausdorff dimension zero.
%\end{itemize}
\end{prop}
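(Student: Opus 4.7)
The plan is to reduce the study of $T$-invariant measures supported in $C_\lambda$ to the dynamics of an auxiliary degree-one monotone circle map, and then invoke the classical Poincaré theory of rotation numbers.

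First I would introduce the map $R_\lambda:\T\to\T$ that agrees with $T$ on $C_\lambda$ and is constant (equal to $T(\lambda)$) on $\T\setminus C_\lambda$. It has the natural lift $\widehat R_\lambda:\R\to\R$ given by $\widehat R_\lambda(x)=qx$ for $x\in[\lambda,\lambda+q^{-1}]$ and $\widehat R_\lambda(x)=q\lambda+1$ for $x\in(\lambda+q^{-1},\lambda+1)$. This is nondecreasing of degree one, hence possesses a well-defined rotation number $\rho(\lambda)=\lim_n \widehat R_\lambda^n(0)/n$. One checks that $(\lambda,x)\mapsto\widehat R_\lambda(x)$ is jointly continuous and pointwise monotone in $\lambda$, and that $\widehat R_{\lambda+1}(x)=\widehat R_\lambda(x)+q-1$. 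These facts imply $\rho$ is a continuous nondecreasing function with $\rho(\lambda+1)=\rho(\lambda)+q-1$, and $R_{\lambda+1}=R_\lambda$, so that $\frak{S}_{\lambda+1}=\frak{S}_\lambda$ once existence and uniqueness are established.

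Next, existence and uniqueness of $\frak{S}_\lambda$: any $T$-invariant measure supported in $C_\lambda$ is $R_\lambda$-invariant, and conversely any $R_\lambda$-invariant measure supported in $C_\lambda$ is $T$-invariant there. When $\rho(\lambda)$ is rational, standard Poincaré theory says every orbit of $R_\lambda$ converges to a periodic orbit; a short case analysis (using that on $\T\setminus C_\lambda$ the map is constant, so at most one periodic orbit can meet $\T\setminus C_\lambda$, and points of a periodic orbit inside $\text{int}(C_\lambda)$ are two-sided repelling while those meeting $\T\setminus C_\lambda$ are two-sided attracting) shows exactly one periodic orbit lies in $C_\lambda$; its orbit measure is then the unique invariant probability supported in $C_\lambda$. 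When $\rho(\lambda)$ is irrational, Poincaré's theorem yields a monotone degree-one semi-conjugacy $h$ to the rigid rotation $x\mapsto x+\rho(\lambda)$; pushing any $R_\lambda$-invariant probability measure by $h$ gives Lebesgue measure, and since $h$ is monotone with countable exceptional set and $\T\setminus C_\lambda$ lies inside a single fiber of $h$, pulling back Lebesgue determines the measure uniquely and forces its support into $C_\lambda$.

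Finally, for the Hausdorff dimension of $\overline\Gamma$: I would show $\overline\Gamma$ has upper Minkowski dimension $0$ (which bounds Hausdorff dimension). Write $\lambda=\sum_{j\ge 1}a_j q^{-j}$. If $\rho(\lambda)$ is irrational, then $T^n\pi(\lambda)\in C_\lambda$ for all $n$ (an orbit entering $\T\setminus C_\lambda$ would be periodic), which forces $a_1\le q-2$ and $a_j\in\{a_1,a_1+1\}$ for $j\ge 2$. A direct bookkeeping on the lift (comparing $[\widehat R_\lambda^n(\lambda)]$ with the number of indices $j\le n$ having $a_j=a_1+1$) yields
$$\#\{1\le j\le n:a_j=a_1+1\}=[n(\rho(\lambda)-a_1)],\qquad n\ge 1.$$
Thus the digits of $\lambda$ are completely determined by $\rho(\lambda)$, and for fixed $Q\ge 2$, on any interval of $\rho$-values free of rationals with denominator $\le Q$ the first $Q-1$ digits of $\lambda$ are constant, so $\lambda$ varies in an interval of length at most $q^{-Q}$. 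Since $\rho(\Gamma\cap[k/q,(k+1)/q))\subset[k-1,k+1]$ is subdivided into $O(Q^2)$ such intervals by Farey fractions of order $\le Q$, we obtain a cover of $\Gamma$ by $O(Q^2)$ intervals of length $q^{-Q}$, proving $\overline{\dim}_B\,\overline\Gamma=0$.

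The main obstacle is the case analysis establishing uniqueness when $\rho(\lambda)$ is rational (ensuring exactly one periodic orbit lies in $C_\lambda$, with all other invariant measures ruled out), and carefully deriving the digit/rotation-number identity used for the dimension estimate; the rest is routine from Poincaré's classical theorem.
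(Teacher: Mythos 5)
Your treatment of existence and uniqueness of $\frak{S}_\lambda$ follows the paper's route exactly: you introduce the monotone degree-one circle map $R_\lambda$, use its rotation number, and split into the rational case (a case analysis on type~I/II/III periodic points, with the key observation that points of a periodic orbit in $\mathrm{int}(C_\lambda)$ are two-sided repelling while those meeting $\T\setminus C_\lambda$ are two-sided attracting) and the irrational case (Poincar\'e semi-conjugacy $h$ to a rigid rotation, pushing forward to Lebesgue, and using that $\T\setminus C_\lambda$ sits inside a single non-trivial fiber of $h$). That part matches the paper's Lemmas on the rational and irrational cases.

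Where you genuinely diverge is the dimension estimate for $\overline\Gamma$. The paper simply cites Veerman for $\dim_H(\Gamma)=0$ and then finishes by observing that $\overline\Gamma\setminus\Gamma$ is countable (any such $\lambda$ supports a periodic Sturmian measure whose support touches $\partial C_\lambda$, forcing $\lambda$ or $\lambda+q^{-1}$ to be a $T$-periodic point). You instead give a self-contained covering argument: for $\lambda$ with irrational rotation number, the $q$-ary digits $a_j$ of $\lambda$ satisfy $a_1\le q-2$, $a_j\in\{a_1,a_1+1\}$, and $\#\{1\le j\le n:a_j=a_1+1\}=[n(\rho(\lambda)-a_1)]$; hence on any $\rho$-interval avoiding rationals of small denominator the first $O(Q)$ digits of $\lambda$ are frozen, and Farey subdivision of order $Q$ yields a cover of $\Gamma$ (and then of $\overline\Gamma$, after thickening to closed intervals) by $O(Q^2)$ intervals of length $q^{-Q}$. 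This actually establishes the stronger statement that $\overline\Gamma$ has upper Minkowski dimension zero, which is exactly the improvement the authors attribute to Bousch in a remark following their Theorem on the set $\mathcal P$. So your route buys a sharper, self-contained result at the cost of a more delicate digit/rotation-number bookkeeping, whereas the paper's route is shorter but leans on an external citation plus a quick countability observation to pass from $\Gamma$ to $\overline\Gamma$. One small point worth stating explicitly in your argument: the digit identity shows that $\rho$ restricted to $\Gamma\cap[k/q,(k+1)/q)$ is injective, which is what legitimizes transporting the Farey cover in $\rho$-space to a cover in $\lambda$-space; and the off-by-one between ``denominator $\le Q$'' and ``first $Q-1$ digits'' should be pinned down, though it does not affect the $O(Q^2)\cdot q^{-Q}$ count.
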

For each $\lambda\in \R$, let $R_\lambda: \T\to \T$ denote the continuous map which satisfies that $R_\lambda|_{C_\lambda}=T|_{C_\lambda}$ and $R_\lambda$ is constant in $\T\setminus C_\lambda$. So $R_{\lambda+1}=R_\lambda$ for each $\lambda$. The map $R_\lambda$ is a monotone continuous circle map of degree one and it has a well-defined rotation number $\rho(\lambda)\in \T$. Since $R_\lambda(x)$ is continuous in $(x,\lambda)$, $\rho(\lambda)$ is continuous in $\lambda$. For each $x\in \T$, $\lambda\mapsto R_\lambda(x)$ is monotone increasing, so $\rho(\lambda)$ is also monotone increasing in $\lambda$. It is well-known that $\rho(\lambda)\in \Q/\Z$ if and only if $R_\lambda$ has periodic points.   See \cite{Herman1979,Nitecki1971}.

\begin{proof} The existence and uniqueness of $\frak{S}_\lambda$ are proved in Lemmas~\ref{lem:rationalSturm} and~\ref{lem:irrationalSturm} below. By~\cite{Veerman1989}, $\dim (\Gamma)=0$, so it suffices to show that $\overline{\Gamma}\setminus \Gamma$ is countable. Indeed, if $\lambda\in \overline{\Gamma}\setminus \Gamma$, then $\frak{S}_\lambda$ is a periodic measure and the support is not contained in the interior of $C_\lambda$, hence either $\lambda\mod 1$, or $\lambda+1/q \mod 1$ is periodic under $T$. Thus $\overline{\Gamma}\setminus \Gamma$ is countable. Consequently, $\dim (\overline{\Gamma})=0$.
\end{proof}
In the following two lemmas, we treat separately the cases of rational and irrational rotation numbers.

\begin{lemma}\label{lem:rationalSturm}
Suppose that the rotation number $\rho(R_\lambda)$ is rational. Then $R_\lambda$ has a unique invariant probability measure $\frak{S}_\lambda$ supported in $C_\lambda$, and the support of this measure is a periodic orbit of $T$.
%More precisely, exactly one of the following holds:
%\begin{itemize}
%\item[{\rm (1)}] $R_\gamma$ has a unique periodic orbit, this orbit is contained in $C_\gamma$ and intersect $\partial C_\gamma$;
%\item[{\rm (2)}] $R_\gamma$ has exactly two  periodic orbits, one of which is completely contained in $\text{\rm int}(C_\gamma)$ and the other %intersects $\T\setminus C_\gamma$.
%\end{itemize}%contains a unique periodic orbit of $R_\gamma$ and $\frak{S}_\gamma$ is supported on this orbit.
\end{lemma}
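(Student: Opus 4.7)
The plan is to reduce the problem from $T$ to $R_\lambda$ and then use the theory of monotone degree-one circle maps with rational rotation number. First I would note that a Borel probability measure $\mu$ on $\T$ with $\mathrm{supp}(\mu)\subset C_\lambda$ is $T$-invariant if and only if it is $R_\lambda$-invariant: the two maps agree on $C_\lambda$, and $\mu(\T\setminus C_\lambda)=0$ absorbs the discrepancy on the plateau. So it suffices to show $R_\lambda$ has exactly one invariant probability measure supported in $C_\lambda$ and that this measure is concentrated on a periodic orbit (which is then automatically a periodic orbit of $T$). Writing $\rho(R_\lambda)=r/p$ in lowest terms, the standard theory of monotone degree-one circle maps gives us: $R_\lambda$ has at least one periodic orbit, every periodic orbit has period $p$, every point is asymptotic under $R_\lambda^p$ to a periodic orbit, and every invariant probability measure is a convex combination of periodic orbit measures.

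Next I would classify the periodic orbits of $R_\lambda$ into three types: Type~I, entirely in $\mathrm{int}(C_\lambda)$; Type~II, meeting $\T\setminus C_\lambda$; Type~III, contained in $C_\lambda$ but meeting the boundary $\{\lambda,\lambda+1/q\}\bmod 1$. Stability is immediate from the local form of $R_\lambda$: near a Type~I point the map is $T$ with derivative $q$, so $(R_\lambda^p)'=q^p>1$ and the orbit is two-sided repelling; near a Type~II orbit point lying in $\T\setminus C_\lambda$ the map is locally constant, so the orbit is two-sided super-attracting; a Type~III orbit point is repelling from its $C_\lambda$-side and attracting from its $\T\setminus C_\lambda$-side.

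Since $R_\lambda(\T\setminus C_\lambda)=\{q\lambda\bmod 1\}$, any Type~II periodic orbit must pass through $q\lambda$, so at most one such orbit exists; similarly $R_\lambda(\lambda)=R_\lambda(\lambda+1/q)=q\lambda$ forces at most one boundary point to be periodic, so at most one Type~III orbit exists. I would then rule out coexistence. If a Type~III orbit $\mathcal O$ exists it contains $q\lambda$, which precludes a Type~II orbit (such an orbit would also contain $q\lambda$, hence coincide with $\mathcal O$, contradicting $\mathcal O\subset C_\lambda$). It also precludes a Type~I orbit: taking a boundary point $a\in\mathcal O$ and a hypothetical Type~I point $b$, the shortest arc $J\subset C_\lambda$ joining $a$ to $b$ with no other periodic point in its interior would have both endpoints fixed by $R_\lambda^p$ and repelling into $J^\circ$, which is incompatible with the monotone drift of $R_\lambda^p$ between consecutive fixed points (which always pushes orbits toward a single endpoint). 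In the Type-III-free case, the standard alternation of attracting and repelling periodic points of a monotone degree-one map around $\T$, combined with uniqueness of the Type~II orbit, forces exactly one Type~I and one Type~II orbit.

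In every scenario there is precisely one periodic orbit of $R_\lambda$ contained in $C_\lambda$ (the Type~III orbit when it exists, otherwise the unique Type~I orbit), and its orbit measure is the desired $\mathfrak{S}_\lambda$. The main obstacle is the third paragraph: the combinatorial exclusion arguments, in particular ruling out the coexistence of Type~I and Type~III orbits, which rests on the fact that on any arc between consecutive fixed points of $R_\lambda^p$ the dynamics is monotone toward one endpoint, so at most one of those endpoints can be two-sided repelling.
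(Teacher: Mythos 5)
Your proof is correct and follows essentially the same route as the paper: the same three-type classification of periodic orbits (interior, straddling, boundary), the same stability analysis via $(R_\lambda^p)'\in\{0,q^p\}$, and the same exclusion arguments showing exactly one periodic orbit lies in $C_\lambda$. The only addition is the explicit (and correct) opening reduction identifying $T$-invariant measures supported in $C_\lambda$ with $R_\lambda$-invariant ones, which the paper uses implicitly.
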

\begin{proof} Since $\rho(R_\lambda)$ is rational, all invariant probability measures of $R_\lambda$ are supported on periodic points. So it suffices to show that $R_\gamma$ has a unique periodic orbit contained in $C_\lambda$. Let $p$ be the minimal positive integer such that $p\cdot \rho(R_\gamma)=0\mod 1$. Then each periodic point of $R_\gamma$ has period $p$. %by Proposition \ref{prop:circlemap}.
Let us say that a periodic orbit of $R_\gamma$ is of
\begin{itemize}
\item type I, if the orbit is contained in the interior of $C_\gamma$;
\item type II, if the orbit intersects $\T\setminus C_\gamma$;
\item type III, if the orbit is contained in $C_\gamma$ but intersects $\partial C_\gamma$.
\end{itemize}
A periodic point is said of type I (resp. II, III) if its orbit is of that type.
Let us make the following remarks. If $y$ is a type I periodic point, then $(R_\gamma^p)'=q^p$ in a neighborhood of $y$, so $y$ is two-sided repelling. If $y$ is a type II periodic point, then $(R_\gamma^p)'=0$ in a neighborhood of $y$, so $y$ is two-sided attracting. Since both $\pi(\gamma)$ and $\pi(\gamma+q^{-1})$ are mapped by $R_\gamma$ to the same point $\pi(q\gamma)$, only one of them can be periodic. So there can be at most one periodic orbit of type III, which contains either $\pi(\gamma)$ or $\pi(\gamma+q^{-1})$, and each point in this orbit is attracting from one-side and repelling from the other side.

First assume that there exists a periodic orbit $\mathcal{O}$ of type III.  Then we show that $\mathcal{O}$ is the only periodic orbit of $R_\gamma$.  Without loss of generality, assume that the orbit contains $\pi(\gamma)$. Since $R_\gamma(x)=R_\gamma(\pi(\gamma))$ for all $x\in \T\setminus C_\gamma$, there exists no type II periodic point. There cannot be periodic points of type I either, otherwise, there would exist an arc $J=[a,b]$ with $a\in \mathcal{O}$ and $b$ a periodic point of type I and with no periodic point in the interior of $J$.  This is impossible because $a$ is repelling from the right hand side and $b$ is repelling from the left hand side (in fact from  both sides).

Next assume that there is no periodic orbit of type III, that is to say, all periodic points are of type I or II. Then, by the above remarks, each periodic point is either attracting (from both sides) or repelling from both sides. In particular, there are only finitely many periodic points.%~\marginpar{added} 
Note that if $a$ and $ b$ are two adjacent periodic points, then one of them must be attracting and the other repelling. Thus the number of periodic points of type I is the same as that of type II.
Since $R_\gamma$ is constant on $\T\setminus C_\gamma$, there is only one periodic orbit of type II. It follows that there exists exactly one periodic orbit of type I and exactly one of type II.

We have thus proved that $R_\gamma$ has exactly one periodic orbit contained in $C_\gamma$. 
%By Proposition \ref{prop:circlemap},  all of its invariant probability measures are supported on periodic orbits, it follows that $R_\gamma$ has a %unique invariant measure supported in $C_\gamma$ and it is actually supported on the unique periodic orbit in $C_\gamma$.
\end{proof}

\begin{lemma} \label{lem:irrationalSturm}
Suppose that $\rho(R_\gamma)$ is irrational. Then there is a unique $T$-invariant Borel probability measure
%, denoted by $\frak{S}_\gamma,$
supported in $C_\gamma\subset \T$. 
%Moreover, $T^n(\pi(\gamma))\in C_\gamma$ for all $n\ge 0$.
%$$\supp(\frak{S}_\gamma)=\{x\in C_\gamma: T^n(x)\in C_\gamma \text{ for all } n\ge 0\}\ni \pi(\gamma).$$
%$T^n(\gamma\mod 1)\in C_\gamma$ for all $n\ge 1$.
%\in \supp (\frak{S}_\gamma)$.
\end{lemma}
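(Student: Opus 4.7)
The plan is to invoke the Poincaré classification of monotone degree-one circle maps with irrational rotation number and transfer the problem to the uniquely ergodic rigid rotation. Setting $\alpha := \rho(R_\gamma) \bmod 1$, classical theory provides a continuous monotone degree-one surjection $h : \T \to \T$ satisfying $h \circ R_\gamma = R_\alpha \circ h$, where $R_\alpha(x) = x + \alpha \bmod 1$. The essential structural observation I would record is that, since $R_\gamma$ is constant on the open arc $U := \T \setminus C_\gamma$ while $R_\alpha$ is injective, $h$ must collapse $\overline{U}$ to a single point $y_0$; in particular $h^{-1}(y_0) \supseteq \overline{U}$, and more generally every maximal plateau of $h$ lies in the grand orbit of $\overline{U}$ under $R_\gamma$.

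For existence, I would start with any $R_\gamma$-invariant Borel probability measure $\mu$ supplied by Krylov--Bogoliubov. Its pushforward $h_*\mu$ is $R_\alpha$-invariant, hence equals Lebesgue measure by unique ergodicity of the irrational rotation $R_\alpha$. Therefore $\mu(U) \le \mu(h^{-1}(y_0)) = h_*\mu(\{y_0\}) = 0$, which yields $\supp(\mu) \subseteq C_\gamma$. Because $T$ and $R_\gamma$ agree on $C_\gamma$ and $\mu(\T \setminus C_\gamma) = 0$, a routine computation (splitting $T^{-1}(A)$ along $C_\gamma$) shows $\mu$ is also $T$-invariant.

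For uniqueness, I would take two $T$-invariant probability measures $\mu_1, \mu_2$ supported in $C_\gamma$, observe that each is automatically $R_\gamma$-invariant by the same computation, and push each forward by $h$ to obtain Lebesgue measure. The collection of plateaus of $h$ is at most countable (disjoint arcs of positive length) and lies in $\bigcup_{n \ge 0} R_\gamma^{-n}(\overline{U})$, so each $\mu_i$ assigns mass zero to every plateau by $R_\gamma$-invariance. Hence for any arc $I$ whose endpoints lie outside the countable set $E$ of plateau endpoints,
$$\mu_i(I) = \mu_i(h^{-1}(h(I))) = h_*\mu_i(h(I)) = \Leb(h(I)),$$
independent of $i \in \{1,2\}$. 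Since such arcs generate the Borel $\sigma$-algebra, $\mu_1 = \mu_2$. The main obstacle is the bookkeeping around the plateaus of $h$: one must verify that both measures vanish on the entire grand orbit of $\overline{U}$ under $R_\gamma$, which is precisely the information needed to make the semiconjugacy $h$ behave as a measure isomorphism onto $(\T, \Leb, R_\alpha)$.
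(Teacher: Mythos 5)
Your proof is correct and takes essentially the same route as the paper: apply the Poincar\'e semiconjugacy $h$ to the irrational rotation, push any $R_\gamma$-invariant measure forward to Lebesgue by unique ergodicity, and use the plateau structure of $h$ to conclude $\mu(I)=\Leb(h(I))$ on a family of arcs that determines the measure. One small remark: the vanishing of $\mu_i$ on plateaus is obtained more directly from $\mu_i\bigl(h^{-1}(y)\bigr)\le h_*\mu_i(\{y\})=\Leb(\{y\})=0$ for every $y\in\T$ (which is exactly how the paper gets the inequality $\mu(I)\le h_*\mu(h(I))$), so the claim that every plateau lies in $\bigcup_{n\ge 0}R_\gamma^{-n}(\overline U)$ --- which is not quite accurate as stated --- is not actually needed.
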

\begin{proof} By a classical theorem of Poincar\'e, there exists a monotone continuous circle map of degree one such that $h\circ R_\gamma(x)= h(x)+\rho(R_\gamma)\mod 1$.
%The existence of such a $h$ is ensured by Proposition \ref{prop:circlemap}.
	Let $$
	E=\{x\in \T: \# h^{-1}(x)>1\}, \quad E'=\bigcup_{x\in E} h^{-1}(x).$$
	 Since $h$ is monotone,   $\{h^{-1}(x):x\in E\}$ is a disjoint family of non-degenerate (closed) arcs in $\T$. So $E$ is countable.
Note that $E'\supset \T\setminus C_\gamma$, since $R_\gamma$ is constant in $\T\setminus C_\gamma$.

Let $\mu$ be a $T$-invariant measure supported by $C_\gamma$.
Then $\mu$ is a $R_\gamma$-invariant probability measure. Let
$\mu$ be an arbitrary $R_\gamma$-invariant probability measure. Let us prove that $\mu(I)=|h(I)|$ for any arc $I\subset \T$. This will imply that $R_\gamma$ is uniquely ergodic and $\text{supp} (\mu)=\T\setminus \text{int}(E')\subset C_\gamma$.
%Let us prove that $R_\gamma$ is uniquely ergodic and the unique invariant probability measure is supported in $C_\gamma$. To this end, let $\mu$ be %an arbitrary invariant probability measure of $R_\gamma$. It suffices to show that $\mu(I)=|h(I)|$ for each arc in $\T$ and $\mu$ is supported in %$C_\gamma$.
Indeed, the image measure $h_*(\mu)$ is an invariant probability measure of the rigid rotation $x\mapsto x+\rho(R_\gamma)\mod 1$, which is necessarily the Lebesgue measure, for $\rho(R_\gamma)$ is irrational. Observe that for each arc $I\subset \T$,
$$ I\subset h^{-1}(h(I)), \qquad I\supset h^{-1}(h(I)\setminus E).$$
 Thus we get $\mu(I)=|h(I)|$ because
 \[
 \mu(I)\le h_*\mu(h(I))=|h(I)|, \qquad \mu(I)\ge h_*\mu(h(I)\setminus E)=|h(I)|.
 \]
%Since $R_\gamma$ is constant on $\T\setminus C_\gamma$, so is $h$. Thus $\mu(\T\setminus C_\gamma)\le h_*\mu (h(\T\setminus C_\gamma))=0.$
%
%
%Finally, let us prove $x_n:=T^n(\pi(\gamma))\in C_\gamma$ for all $n\ge 0$. Otherwise, there exists a minimal $n_0\ge 1$ such that $x_{n_0}\in %\T\setminus C_\gamma$. But then $$
%   R_\gamma(x_{n_0})=R_\gamma(\pi(\gamma))=T(\pi(\gamma))=x_1.
%$$
%%=R_\gamma (\pi(\gamma))$.
%This implies that $x_{n_0}$ is  a periodic point
%of $R_\gamma$ of period $n_0$, contradicting with the assumption $\rho(\gamma)\not\in \Q$.
%So $\mu$ is supported in $\T\setminus E'\subset C_\gamma$. Therefore $\mu$ is uniqueSince $h$\T\setminus C_\gamma$, it follows that $\mu(\T\setminus %C_\gamma)=0$, i.e. $\mu$ is supported in $C_\gamma$.
%
%It is well-known (Poincar\'e's theorem) that $R_\rho$ is semi-conjugate to the rigid rotation $x\mapsto x+\rho\mod 1$  via a monotone circle map of %degree one. As the rigid rotation has the Lebesgue measure as the unique invariant Borel probability measure, the statement follows.
\end{proof}
%\subsection{Proof of Proposition~\ref{prop:lbarb}}
\section {Appendix B: Computation of $\beta(c)$ and $\gamma(c)$}\label{sec:compute}

The theory developed in Section \ref{sec:max}
and Section \ref{sect:PStoS} allows us to compute
$\beta(c)$ and then $\gamma(c)$ for a very large set of $c$'s.
The computation is based on Proposition \ref{prop:bouschsturm}, Proposition \ref{lem:Jenkinsonlemma} and Theorem \ref{Pre-S}. The method is computer-aided,
but the results are exact because the computer is only used to test
the signs of two quantities,  which don't need to be  exactly computed. We just consider the case $q=2$.

\subsection{Algorithm}\, \ \

Let $\vartheta:=\{s_1, \cdots, s_m\}\subset [0,1)$ be an $m$-periodic
cycle which is contained in some closed semi-circle.  Then the measure $\mu_{\vartheta}: = \frac{1}{m}\sum_{j=1}^m \delta_{s_j}$
is a Sturmian measure. Let
$$
\Lambda_{\vartheta} := [s_{\max} - 1/2, s_{\min}] \quad \mod 1
$$
where $s_{\min} := \min s_j$ and $s_{\max} :=\max s_j$.
Then for any $\lambda \in \Lambda_\vartheta$, the semi-circle $C_\lambda$
contains the support $\vartheta$ of the Sturmian measure $\mu_{\vartheta}$. We emphasize that each $C_\lambda$ contains a unique Sturmian measure, the same measure $\mu_{\vartheta}$ for all $\lambda \in \Lambda_{\vartheta}$.

Given a parameter $c \in [0,1)$, put $b=1/2-c$. Suppose
\begin{equation}\label{pm0}
\lambda_1, \lambda_2 \in (b, b+1/2) \cap \Lambda_{\vartheta}
\end{equation}
 such that
\begin{equation}\label{plus-minus}
v_{c}(\lambda_1) <0 < v_{c}(\lambda_2).
\end{equation}
Then there exists a unique number $\lambda^*$ between $\lambda_1$ and $\lambda_2$ %, and contained in $(1/2-c, 1-c)$
such that $\nu_{\lambda^*}(c) =0$. Therefore the Sturmian measure with support in $C_{\lambda^*}$,
which is $\mu_{\vartheta}$, is the maximizing measure for $f_c$. Thus
\begin{equation}\label{bc}
\beta(c) = \frac{1}{m} \sum_{j=1}^m f_c(s_j)= \frac{1}{m} \log \prod_{j=1}^m |\cos \pi (s_j + c)|+\log 2.
\end{equation}

In practice, we can take as $\lambda_1, \lambda_2$ the end points
of the interval $(b, b+1/2) \cap \Lambda_{\vartheta}$.
We are happy that we don't need to know what $\lambda^*$ is exactly.
See
Table~\ref{tab-2} for the values of $\beta(c)$ for specific $c$'s.

Since $c \mapsto v_c(\lambda)$ is continuous, for given $\lambda_1,\lambda_2$, (\ref{plus-minus}) define an open set of $c$. Thus, if (\ref{plus-minus}) holds, then
%is verified it is still verified under a small perturbation of %$c$ and
%maintain the condition that $\lambda_1, \lambda_2 \in (b(c), %b(c)+1/2) \cap \Lambda_{\vartheta}$ for these perturbed $c$'s. %It follows that
the formula (\ref{bc}) holds not only for $c$ but also on a neighbourhood of $c$. In particular, $\beta(\cdot)$ is analytic at $c$. For a given cycle, there is an interval $[c_*, c^*]$
on which (\ref{bc}) holds.
These intervals are shown in
Table~\ref{tab-1}.

The graph of $\beta(\cdot)$ is shown in Figure \ref{fig:beta}.

\begin{figure}[htb]
	\centering
	\includegraphics[width=0.85\linewidth]{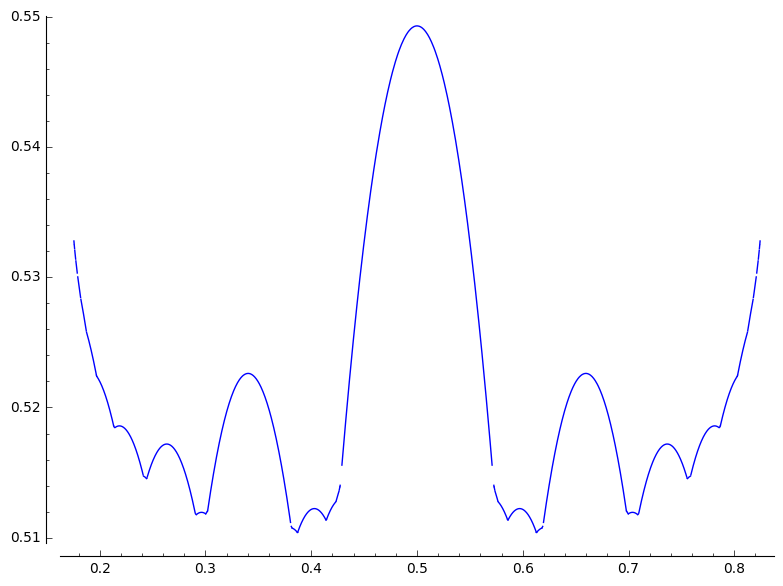}
	\caption{The graphs of $\beta(c)$.}
	\label{fig:beta}
\end{figure}

\subsection{First time leaving $C'_\lambda$} \, \ \

Let $0\le \lambda \le \frac{1}{2}$. Then the map $\tau_\lambda: [0,1)
	\to C'_\lambda:= [\lambda, \lambda +\frac{1}{2})$ is defined by
	$$
	     \tau_\lambda(x) = \frac{x+1}{2}\chi_{[0, 2 \lambda)}(x)
	     + \frac{x}{2}\chi_{(2 \lambda, 1)}(x).
	$$
See Figure \ref{fig:T} for the branch $T|_{C_\lambda}$.
See Figure \ref{fig:01} for the graphs of $e_0$ and $e_{1/4}$.
%If $\frac{1}{2}< \lambda <1$, the map $\tau_\lambda: [0,1]
%\to C_\lambda:= [0, \lambda -\frac{1}{2}) \cup [\lambda, 1]$ is %defined by
%$$
%\tau_\lambda(x) = \frac{x}{2}\chi_{[0, 2\lambda-1)}(x)
%+ \frac{x+1}{2}\chi_{[2\lambda-1, 1)}(x).
%$$
\begin{figure}[htb]
	\centering
	\includegraphics[width=0.45\linewidth]{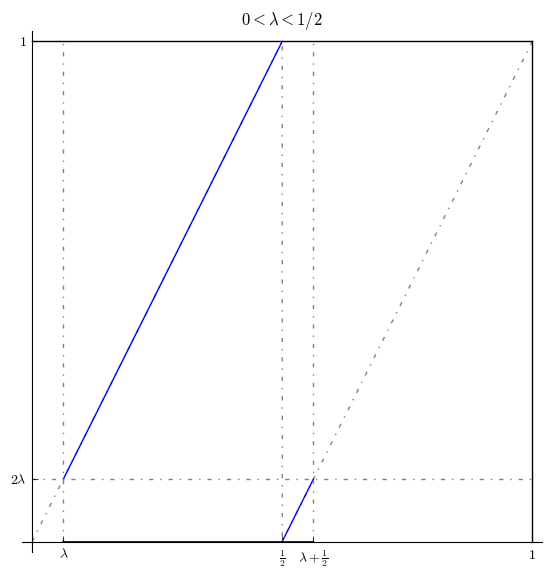}
	\caption{The branch  $T|_{C_\lambda}$
		% for $0<\lambda<\frac{1}{2}$ (left) and  $\frac{1}{2}<\lambda<1$ %(right)
	}
	\label{fig:T}
\end{figure}
\iffalse
It is easy to find the explicit expressions of $e_0$:% and $e_{1/2}$:	
$$
e_0(x) = \sum_{n=1}^\infty n \chi_{[2^{-(n+1)}, 2^{-n})}(x).
$$
\fi
%$\lambda =\frac{1}{2}$, $\tau_{\frac{1}{2}}: [0,1] \to [\frac{1}{2}, 1]$ %defined by $\tau_{\frac{1}{2}}(x) = \frac{x}{2}+\frac{1}{2}$.
%$$
%    e_{\frac{1}{2}}(x) = \sum_{n=1}^\infty n \chi_{[2^{-1}+ \cdots %+2^{-n}, 2^{-1}+ \cdots +2^{-n} +2^{-(n+1)})} (x).
%$$

 \begin{figure}[htb]
	\centering
	\includegraphics[width=0.45\linewidth]{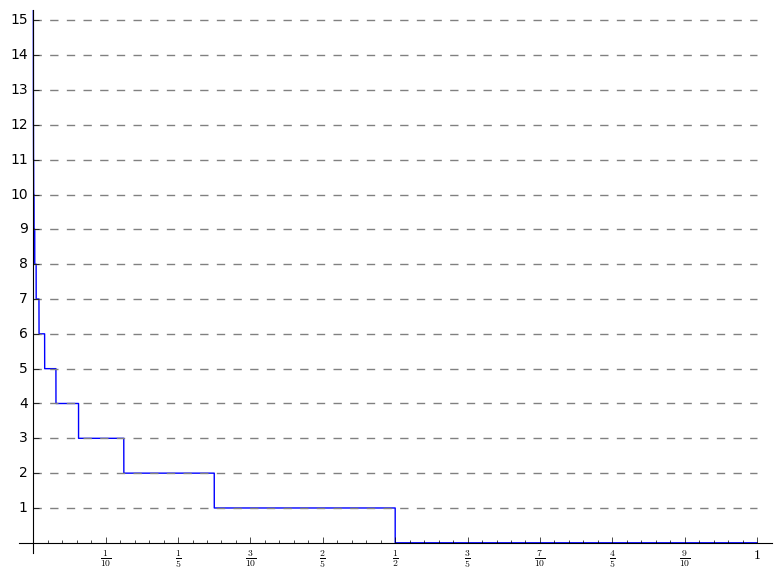}
	\includegraphics[width=0.45\linewidth]{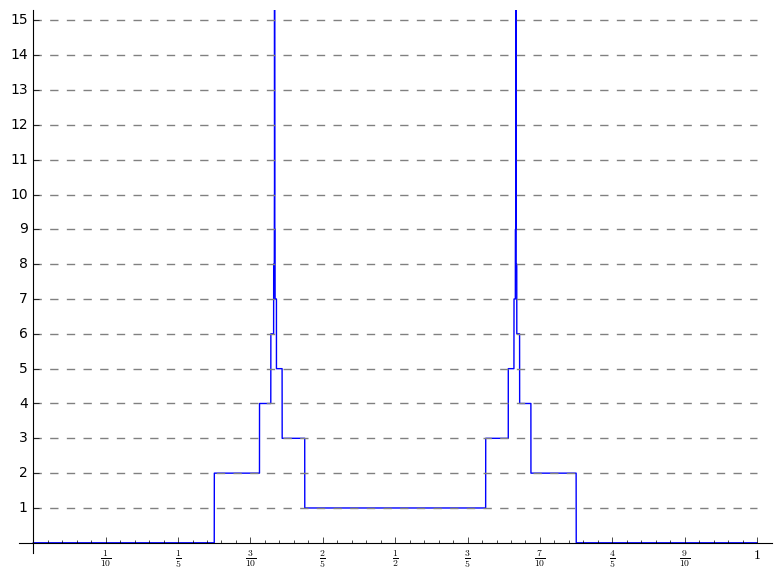}
	\caption{The graphs of $e_0$ and $e_{1/4}$.}
	\label{fig:01}
\end{figure}

\iffalse
It is also easy to find the expressions of $e_{3/4}$:
$$
e_{\frac{3}{4}}(x) = \sum_{n=1}^\infty
\left(\chi_{[0,  2^{-(n+1)})} (x) + \chi_{[1-2^{-(n+1)}, 1)}(x)
\right).
$$

\begin{figure}[htb]
	\centering
	\includegraphics[width=0.45\linewidth]{8-32}
	\includegraphics[width=0.45\linewidth]{24-32}
	\caption{The graphs of $e_{1/4}$ and $e_{3/4}$.}
	\label{fig:03}
\end{figure}
\fi

%$\lambda =\frac{1}{2}$, $\tau_{\frac{1}{2}}: [0,1] \to [\frac{1}{2}, 1]$ %defined by $\tau_{\frac{1}{2}}(x) = \frac{x}{2}+\frac{1}{2}$.
Let us look at $e_{1/4}$.  Observe  that $e_{1/4}$
is symmetric with respect to $1/2$, i.e. $e_{1/4}(x) = e_{1/4}(1-x)$
for a.e. $x \in [0,1]$. Indeed,
 $C_{1/4}$ is a union of two
intervals of length $1/4$ which are symmetric with respect to $1/2$, %i.e.
%$$
%       C_{1/4} = [0, 2^{-2}] +\{2^{-1}, 2^{-2}\}
%$$
and $\tau_{1/4} x = x/2+1/2$ if $x \in [0, 1/2)$
and $\tau_{1/4} x = x/2$ if $x \in [1/2, 1)$ so that
it maps two symmetric intervals to two symmetric intervals.
%It is easy to see
%that
%\begin{eqnarray*}
%   \tau_{1/4}(C_{1/4})
%   &= & [0, 2^{-3}] +\{ 2^{-3}+2^{-1}, 2^{-2}\},
%   \\
%    \tau^2_{1/4}(C_{1/4}) &=& [0, 2^{-4}] +\{ 2^{-4}+2^{-2}, %2^{-3}+2^{-1}\}.
%\end{eqnarray*}
%By induction, we can prove that for $n\ge 0$ we have
%\begin{eqnarray*}
%    \tau_{1/4}^{2n}(C_{1/4}) &=& [0, 2^{-(2n+2)}]+ \{2^{-1} + 2^{-3} + %\cdots + 2^{-(2n+1)}, 2^{-2} + 2^{-4} + \cdots + 2^{-(2n+2)}\}
%\\
%\tau_{1/4}^{2n+1}(C_{1/4}) &=& [0, 2^{-(2n+3)}]+ \{2^{-1} + 2^{-3} + %\cdots + 2^{-(2n+3)}, 2^{-2} + 2^{-4} + \cdots + 2^{-(2n+2)}\}.
%\end{eqnarray*}
%$$
%e_{\frac{1}{4}}(x) = \sum_{n=1}^\infty .
%$$

%$\lambda =\frac{3}{4}$, $\tau_{\frac{3}{4}}:
%[0,1] \to [0, \frac{1}{4}]\cup [\frac{3}{4}, 1]$ defined by $$\tau_{\frac{3}{4}}(x) = \frac{x}{2}1_{[0, \frac{1}{2}]}(x) +
%\left(\frac{x}{2}+\frac{1}{2}\right)1_{[\frac{1}{2}, 1]}(x).
%$$
\medskip
\iffalse
{\bf Example 0.} Notice that $\Lambda_{\{0\}} =[1/2, 1] \ni \frac{3}{4}$
and $C_{3/4} = [-1/4, 1/4] \mod 1$. Also notice that $e_{3/4}$
is an even function on $[-1/4, 1/4]$ and $f_0'$ is an odd function. Then we have $\nu_0(3/4)=0$.   Thus  $$
\beta(0) = \log (\cos \pi \cdot 0) =0.
$$
This confirms the trivial result $\beta(0) =0$, but indicates that $3/4$
is the corresponding $\lambda$ such that $\nu_0(\lambda)=0$.
\fi

\subsection{Some examples}\, \ \

{\bf Example 1.  $\mu_{\{1/3,2/3\}}$ is maximizing  for $f_{1/2}$} and
	$$
	\beta(1/2) = \frac{1}{2}\log |\sin(\pi/3) \sin (2\pi/3)|+\log 2
	=\log\sqrt{3}=0.549306144334055.
	$$
This is known to Gelfond \cite{Gelfond1968}. The following is another proof. Recall that in this case $$
(b, b+1/2) \cap \Lambda_{\vartheta} =(0, 1/2)
\cap [1/6,1/3] = [1/6,1/3],$$
 which contains $1/4$. As have noticed above, the function
$e_{1/4}$ is symmetric. On the other hand,  $f_{1/2}$  is anti-symmetric about $1/2$. In other words, we have
   \begin{eqnarray*}
   	e_{1/4}(1/2-x) &= & \ \  e_{1/4}(1/2+x),\\
f'_{1/2}(1/2-x) & = & - f'_{1/2}(1/2 +x).
\end{eqnarray*}
It follows that $\nu_{1/2}(1/4) =0$.  Thus
 $\mu_{\{1/3,2/3\}}$ is the maximizing measure for $f_{1/2}$. It
 follows that $$
 \gamma(1/2) = \frac{\beta(1/2)}{\log 2} = \frac{\log 3}{\log 4}
 =0.79248125036058.
 $$
 \medskip

  {\bf Example 2.  $\mu_{\{7/15,14/15, 13/15, 11/15\}}$ is maximizing  for $f_{1/4}$} and
 \begin{eqnarray*}
\beta(1/4) &=& \frac{1}{4}\log \prod_{j=0}^3|\cos\pi (2^j\cdot 7/15 +1/4)|+\log 2\\
&=&  \frac{1}{4}\log \left|\cos\frac{43\pi}{60} \cos\frac{11\pi}{60}
\cos\frac{7\pi}{60}  \cos\frac{59\pi}{60}\right|+\log2  \\
&=&0.51585926722389.
\end{eqnarray*}
$$
     \gamma(1/4) =\frac{\beta(1/4)}{\log 2}=0.74422760662052.
$$
In fact, in this case $$
 (b, b+1/2) \cap \Lambda_{\vartheta} =(1/4, 3/4)
 \cap [13/30,14/30] =  [13/30,14/30].$$
 Numerical computation shows that $v_{1/4}(13/30) v_{1/4}(14/30) <0$.
 Thus
 $\mu_{\{7/15,14/15, 13/15, 11/15\}}$ is the maximizing measure for $f_{1/4}$.
 \medskip

  {\bf Example 3.  $\mu_{\{1/15,2/15, 4/15, 8/15\}}$ is maximizing  for $f_{3/4}$} and
  %~\marginpar{I suggest that we do not include this example, as it follows %from Ex 2 by symmetry}
  \begin{eqnarray*}
 \beta(3/4) = % & = &
 \frac{1}{4}\log \prod_{j=0}^3|\cos\pi (2^j\cdot 1/15 +3/4)| +\log 2= %\\
 %&=& \frac{1}{4}\log \left|\cos \frac{49\pi}{60}
% \cos \frac{53\pi}{60} \cos \frac{\pi}{60} \cos \frac{17\pi}{60}
 %\right| \\
% &=&
 0.515859267223890.
 \end{eqnarray*}
 In this case $$
 (b, b+1/2) \cap \Lambda_{\vartheta} =(-1/4, 1/4)
 \cap [1/30,2/30] =  [1/30,2/30].$$
 Numerical computation shows that $\nu_{3/4}(1/30) \nu_{3/4}(2/30) <0$.
 Thus
 $\mu_{\{1/15,2/15, 4/15, 8/15\}}$ is the maximizing measure for $f_{3/4}$.

 We can get immediately the value of $\beta (3/4)$ from {\bf Example 2}, by symmetry (Proposition \ref{symmetry}). But we would like to remark the maximizing measures for
 $f_{1/4}$ and $f_{3/4}$ are different.

 {\bf Example 4.  $\mu_{\{3/7,6/7, 5/7\}}$ is maximizing  for $f_{1/3}$} and
\begin{eqnarray*}
 \beta(1/3) &=& \frac{1}{3}\log \prod_{j=0}^2|\cos\pi
  (2^j\cdot 3/7 +1/3)|+\log 2\\
 &=& \frac{1}{3}\log \left(\cos \frac{16 \pi}{21} \cos \frac{4\pi}{21} \cos \frac{\pi}{21}\right)+\log 2\\
 &=& 0.522266412324137
 \end{eqnarray*}
and
\[
   \gamma(1/3) =\frac{\beta(1/3)}{\log 2} =0.81510337231218.
\]
We have only to check $\nu_{1/3}(5/14) \nu_{1/3}(6/14) <0$.

%\iffalse
\subsection{Numerical results}

See
Table~\ref{tab-2} for the values of $\beta(c)$ for specific $c$'s.
The graph of $\beta(\cdot)$ is already shown in Figure \ref{fig:beta}.

We obtain these numerical and graphic results only
using  periodic Sturmian measures of period $ \le 13$.
There are totally $57$ Sturmian cycles of period
$m=2,3,\dots,13$. Thus we find $57$ $\lambda$-intervals $[s_{\max}, s_{\min} -1/2]$ and $57$ $c$-intervals of parameter $[c_*, c^*]$.
These intervals are shown in
Table~\ref{tab-1}.
Notice that both   $\beta(c)$ and $\gamma(c)$ are computed only for $c$ or $ 1-c <0.175633988226123$. More results can be obtained if
we consider periodic Sturmian measures of period $ \ge 14 $.

 For any Sturmian cycle $\vartheta=\{s_1, \cdots, s_m\}$, there is an interval
 $\Lambda_{\vartheta} = [s_{\max} -1/2, s_{\min}]$ of $\lambda$
 and an interval
 $[c_*,c^*]$ of $c$. The value of $\beta(c)$ for $c\in [c_*, c^*]$
 is expressed by the formula (\ref{bc}).
%\fi

 \begin{center}
 	\begin{threeparttable}
 		\small
 		\caption{Values of $ \beta(c) $ and $ \gamma(c) $ for specific $ c $'s}
 		\label{tab-2}
 		\centering
 		\begin{tabular}{@{}llllll@{}}
 			\toprule
 			$c$ & $\beta(c)$ & $ \gamma(c)$ & $ c $ & $ \beta(c) $ & $\gamma(c)$ \\ \midrule		
 			$1/2$ &  $\log(\sqrt{3})$ & $ \log 3/\log 4$  &   $7/18$ &  $0.51079$ &  $ 0.73691 $             \\
 			$1/3$ &  $0.52227$ &	$0.75347$&	    $4/19$ &  $0.51949$ &  $0.74947$						    \\
 			$1/4$ &  $0.51586$ &	$0.74423$&		$5/19$ &  $0.51719$	&  $0.74615$					    \\
 			$1/5$ &  $0.52201$ &   $0.75310$&      $6/19$ &  $0.51830$	&  $0.74775$						\\
 			$2/5$ &  $0.51217$ &   $0.73890$&      $7/19$ &  $0.51701$	&  $0.74589$						\\
 			$2/7$ &  $0.51354$ &	$0.74088$&	    $8/19$ &  $0.51252$	&  $0.73941$					    \\
 			$3/7$ &  $0.51515$ &	$0.74321$&		$9/19$ &  $0.54474$	&  $0.78589$					\\		
 			$3/8$ &  $0.51406$ &	$0.74163$&		$7/20$ &  $0.52195$	&  $0.75302$						\\
 			$2/9$ &  $0.51848$ &	$0.74802$&		$9/20$ &  $0.53272$	&  $0.76855$						\\
 			$4/9$ &  $0.52879$ &	$0.76288$&		$4/21$ &  $0.52489$	&  $0.75725$						\\
 			$3/10$&  $0.51184$ &	$0.73843$&		$5/21$ &  $0.51576$	&  $0.74408$						\\
 			$2/11$&  $0.52852$ &	$0.76250$&		$8/21$ &  \multicolumn{2}{c}{**}			\\
 			$3/11$&  $0.51655$ &	$0.74523$&		$5/22$ &  $0.51802$	&  $0.74735$				\\
 			$4/11$&  $0.51875$ &	$0.74840$&		$7/22$ &  $0.51910$	&  $0.74891$				\\
 			$5/11$&  $0.53562$ &	$0.77273$&		$9/22$ &  $0.51196$	&  $0.73860$				\\
 			$5/12$&  $0.51185$ &	$0.73844$&		$5/23$ &  $0.51857$	&  $0.74814$				\\		
 			$3/13$&  $0.51748$ &	$0.74657$&		$6/23$ &  $0.51714$	&  $0.74608$				\\
 			$4/13$&  $0.51496$ &	$0.74293$&		$7/23$ &  $0.51329$	&  $0.74052$				\\
 			$5/13$&  $0.49827$ &	$0.71885$&		$8/23$ &  $0.52222$	&  $0.75340$				\\
 			$6/13$&  $0.53952$ &	$0.77837$&		$9/23$ &  $0.51124$	&  $0.73756$				\\
 			$3/14$&  $0.51844$ &	$0.74795$&		$10/23$ &  $0.52092$	&  $0.75153$				\\
 			$5/14$&  $0.52061$ &	$0.75108$&		$11/23$ &  $0.54619$	&  $0.78799$				\\
 			$7/15$&  $0.54197$ &	$0.78190$&		$5/24$ &  $0.52015$	&  $0.75042$				\\
 			$7/16$&  $0.52326$ &	$0.75491$&		$7/24$ &  $0.51179$	&  $0.73836$				\\
 			$3/17$&  $0.53203$ &	$0.76756$&		$11/24$ &  $0.53782$	&  $0.77591$				\\		
 			$4/17$&  $0.51651$ &	$0.74516$&		$6/25$ &  $0.51517$	&  $0.74324$				\\
 			$5/17$&  $0.51191$ &	$0.73853$&		$7/25$ &  $0.515168$	&  $0.74323$				\\
 			$6/17$&  $0.52148$ &	$0.75234$&		$8/25$ &  $0.51966$	&  $0.74971$				\\
 			$7/17$&  $0.51167$ &	$0.73818$&		$9/25$ &  $0.51987$	&  $0.75001$				\\
 			$8/17$&  $0.54360$ &	$0.78425$&		$11/25$ &  $0.52534$	&  $0.75789$				\\
 			$5/18$&  $0.51567$ &	$0.74396$&		$12/25$ &  $0.54667$	&  $0.78868$				\\								
 			\bottomrule
 		\end{tabular}
 		\begin{tablenotes}
 			\tiny
 			\item \textbf{**} We don't compute $\beta(c)$ and $\gamma(c)$
 			if the parameter $c$ doesn't belong  to any of the intervals in Table~\ref{tab-1}.
 		\end{tablenotes}
 	\end{threeparttable}
 \end{center}

\newpage

\begin{center}
\small
\begin{longtable}{@{}clll@{}}
	 	\caption{Valid intervals $[c_*, c^*]$}\label{tab-1}\\
		
		\toprule \multicolumn{1}{c}{\textbf{Period}} & \multicolumn{1}{c}{\textbf{$s_{\max}-\frac{1}{2}$}} & \multicolumn{1}{c}{\textbf{$s_{\min}$}} &
		\multicolumn{1}{c}{\textbf{$ [c_*,c^*] $}}\\ \midrule
		\endfirsthead

		\multicolumn{4}{c}%
		{{\scshape \tablename\ \thetable{}} -- Continued from  previous page} \\
    
		\toprule \multicolumn{1}{c}{\textbf{Period}} & \multicolumn{1}{c}{\textbf{$s_{\max}-\frac{1}{2}$}} & \multicolumn{1}{c}{\textbf{$s_{\min}$}} &
  		\multicolumn{1}{c}{\textbf{$ [c_*,c^*] $}}\\ \midrule
		\endhead
		
		\hline \multicolumn{4}{r}{{(Continued on next page)}} \\
		\endfoot
    
		\bottomrule
		\endlastfoot
		
		$2$ & $1/6$ & $1/3$ &  $[0.428133329021334,0.571866670978666]$ \\
		$ 3 $ & $1/14 $& $ 1/7 $ & $ [0.619203577131485,0.697872156658965] $ \\
		$ 3 $ & $5/14 $& $ 3/7 $ & $ [0.302127843341035,0.380796422868515] $ \\
		$ 4 $ & $1/30 $& $ 1/15 $ & $ [0.709633870795466,0.755421357085333] $ \\
		$ 4 $ & $13/30 $& $ 7/15 $ & $ [0.244578642914667,0.290366129204534] $ \\
		$ 5 $ & $1/62 $& $ 1/31 $ & $ [0.758710839860046,0.785842721390351] $ \\
		$ 5 $ & $29/62$& $15/31 $ & $ [0.214157278609649,0.241289160139954] $ \\
		$ 5 $ & $9/62$& $ 5/31$ & $ [0.586141644350735,0.612800854796395] $ \\
		$ 5 $ & $21/62$& $11/31$ & $[0.387199145203605,0.413858355649265] $ \\
		$ 6 $ & $1/126$& $1/63$ & $[0.786809543609523,0.802555581755556] $ \\
		$ 6 $ & $61/126$& $31/63$ & $[0.197444418244444,0.213190456390477] $ \\
		$ 7 $ & $1/254$& $1/127$ & $[0.803225220690394,0.812352783425512] $ \\
		$ 7 $ & $125/254$&$63/127$ & $[0.187647216574488,0.196774779309606] $ \\
		$ 7 $ & $17/254$& $9/127$ & $[0.699811031164904,0.708527570112261] $ \\
		$ 7 $ & $109/254$& $55/127$ & $[0.291472429887739,0.300188968835096] $ \\
		$ 7 $ & $41/254$& $21/127$ & $[0.576825192903727,0.585555905085145] $ \\
		$ 7 $ & $85/254$& $43/127$ & $[0.414444094914855,0.423174807096273] $ \\
		$ 8 $ & $1/510$& $1/255$ & $[0.812634013261438,0.817780420556863] $ \\
		$ 8 $ & $253/510$& $127/255$ & $[0.182219579443137,0.187365986738562] $ \\
		$ 8 $ & $73/510$& $37/255$ & $[0.613186931037909,0.617835298917647] $ \\
		$ 8 $ & $181/510$& $91/255$ & $[0.382164701082353,0.386813068962091] $ \\
		$ 9 $ & $1/1022$& $1/511$ & $[0.818062650175864,0.820724099383431] $ \\
		$ 9 $ & $509/1022$&$255/511$ & $[0.179275900616569,0.181937349824136] $ \\
		$ 9 $ & $33/1022$& $17/511$ & $[0.755812148539074,0.758473597746640] $ \\
		$ 9 $ & $477/1022$& $239/511$ & $[0.241526402253360,0.244187851460926] $ \\
		$ 9 $ & $169/1022$& $85/511$ & $[0.576825192903727,0.585555905085145] $ \\
		$ 9 $ & $341/1022$& $171/511$ & $[0.423502938487411,0.426164387694977] $ \\
		$10 $ & $1/2046$& $1/1023$ & $[0.821196509738417,0.822528540248941] $ \\
		$10 $ & $1021/2046$ & $511/1023$ & $[0.177471459751059,0.178803490261583] $ \\
		$10 $ & $145/2046$ & $73/1023$ & $[0.698241698854594,0.699698607225480] $ \\
		$10 $ & $877/2046$& $439/1023$ & $[0.300301392774520,0.301758301145406] $ \\
		$11 $ & $1/4094$& $1/2047$ & $[0.822722890076930,0.823555816343776] $ \\
		$11 $ & $2045/4094$ & $1023/2047$ & $[0.176444183656224,0.177277109923070] $ \\
		$11 $ & $65/4094$& $33/2047$ & $[0.786058868717432,0.786683563417567] $ \\
		$11 $ & $1981/4094$& $991/2047$ & $[0.213316436582433,0.213941131282568] $ \\
		$11 $ & $273/4094$& $137/2047$ & $[0.708807402099743,0.709432096799878] $ \\
		$11 $ & $1773/4094$& $887/2047$ & $[0.290567903200122,0.291192597900257] $ \\
		$11 $ & $585/4094$& $293/2047$ & $[0.618230337528651,0.619009737929774] $ \\		
		$11 $ & $1461/4094$& $731/2047$ & $[0.380990262070226,0.381769662471349] $ \\
		$11 $ & $681/4094$& $341/2047$ & $[0.572917073341487,0.573541768041622] $ \\
		$11 $ & $1365/4094$& $683/2047$ & $[0.426458231958378,0.427082926658513] $ \\
		$12 $ & $1/8190$& $1/4095$ & $[0.823705054848802,0.824017478548726] $ \\
		$12 $ & $4093/8190$& $2047/4095$ & $[0.175982521451274,0.176294945151198] $ \\
		$12 $ & $1321/8190$& $661/4095$ & $[0.585676663495414,0.585989087195338] $ \\
		$12 $ & $2773/8190$& $1387/4095$ & $[0.414010912804662,0.414323336504586] $ \\
		$13 $ & $1/16382$& $1/8191$ & $[0.824099377662201,0.824366011773877] $ \\
		$13 $ & $8189/16382$& $4095/8191$ & $[0.175633988226123,0.175900622337799] $ \\
		$13 $ & $129/16382$& $65/8191$ & $[0.802834232937408,0.803074203637915] $ \\
		$13 $ & $8061/16382$& $4031/8191$ & $[0.196925796362085,0.197165767062592] $ \\
		$13 $ & $545/16382$& $273/8191$ & $[0.755457525487725,0.755686164238488] $ \\
		$13 $ & $7645/16382$& $3823/8191$ & $[0.244313835761512,0.244542474512275] $ \\
		$13 $ & $1169/16382$& $585/8191$ & $[0.697932644443065,0.698161283193827] $ \\
		$13 $ & $7021/16382$& $3511/8191$ & $[0.301838716806173,0.302067355556935] $ \\
		$13 $ & $2377/16382$& $1189/8191$ & $[0.612842893451498,0.613081331005864] $ \\
		$13 $ & $5813/16382$& $2907/8191$ & $[0.386918668994136,0.387157106548502] $ \\
		$13 $ & $2729/16382$& $1365/8191$ & $[0.572640180643138,0.572864153296945] $ \\
		$13 $ & $5461/16382$& $2731/8191$ & $[0.427135846703055,0.427359819356862] $ \\
\end{longtable}
\end{center}

%  \begin{thebibliography}{99}

\nocite{*}
\bibliographystyle{plain}
\bibliography{SWX1.bib}

%\end{thebibliography}
\end{document}